
\documentclass[final,leqno,onefignum,onetabnum]{siamltex1213}
\usepackage{amsfonts}
\usepackage{enumerate}
\usepackage{cases}

\newtheorem{remark}{{\em Remark}}[section]

\newtheorem{example}{{\em Example}}[section]

\def\mr{\mathbb{R}}
\def\mrn{\mathbb{R}^n}
\def\mrm{\mathbb{R}^m}
\def\mn{\mathbb{N}}
\def\mrd{\mathbb{R}^d}

\def\mmf{\mathbb{F}}
\def\mf{\mathcal{F}}

\def\mb{\mathcal{B}}

\def\ms{\mathbb{S}}

\def\md{\mathbb{D}}
\def\ml{\mathbb{L}}

\def\me{\mathbb{E}}
\def\mmu{\mathcal{U}}

\def\t{\tau}
\def\mx{\mathcal{X}}
\def\dd{\mathcal{D}}
\def\ephs{\varepsilon}

\newcommand{\inner}[2]{\left\langle#1,#2\right\rangle}

\title{Pointwise second-order necessary conditions for stochastic optimal controls, Part I: The case of convex control constraint\thanks{This work is partially supported by the National Basic Research Program
of China (973 Program) under grant 2011CB808002, by NSF of China under grant 11231007, and by the PCSIRT (from the Chinese Education Ministry) under grant IRT1273.}}

\author{Haisen Zhang\thanks{School of Mathematics, Sichuan University, Chengdu 610064, Sichuan Province, China. {\small\it E-mail:} {\small\tt haisenzhang@yeah.net}.}\and
Xu Zhang\thanks{School of Mathematics, Sichuan University, Chengdu 610064, Sichuan Province, China. {\small\it E-mail:} {\small\tt
zhang$\_$xu@scu.edu.cn}.}}

\begin{document}
\maketitle
\slugger{sicon}{xxxx}{xx}{x}{x--x}%slugger should be set to mms, siap, sicomp, sicon, sidma, sima, simax, sinum, siopt, sisc, or sirev

\begin{abstract}
This paper is the first part of our series work to establish pointwise second-order necessary conditions for stochastic optimal controls. In this part, both drift and diffusion terms may contain the control variable but the control region is assumed to be convex. Under some assumptions in terms of Malliavin calculus, we establish the desired necessary  condition for stochastic singular optimal controls in the classical sense.
\end{abstract}

\begin{keywords}
Stochastic optimal control, Malliavin calculus,
pointwise second-order necessary condition, variational equation, adjoint equation.
\end{keywords}

\begin{AMS}
Primary 93E20; Secondary 60H07, 60H10.
\end{AMS}

\pagestyle{myheadings}
\thispagestyle{plain}
\markboth{H.~Zhang and X.~Zhang}{Pointwise second-order necessary  conditions}

\section{Introduction}
Let $T>0$ and $(\Omega,\mf, \mmf,$ $P)$  be a complete filtered
probability space (satisfying the usual conditions),
on which a $1$-dimensional standard Wiener
process $W(\cdot)$ is defined such that $\mmf=\{\mf_{t} \}_{0\le t\le T}$ is the natural filtration generated by $W(\cdot)$ (augmented by all of the $P$-null sets).

In this paper, we shall consider the following controlled stochastic differential equation
\begin{equation}\label{controlsys}
\left\{
\begin{array}{l}
dx(t)=b(t,x(t),u(t))dt+\sigma(t,x(t),u(t))dW(t),\ \ \ t\in[0,T],\\
x(0)=x_0,
\end{array}\right.
\end{equation}
with a cost functional
\begin{equation}\label{costfunction}
J(u(\cdot))=\me\Big[\int_{0}^{T}f(t,x(t),u(t))dt+h(x(T))\Big].
\end{equation}
Here $u(\cdot)$ is the control variable valued in a set $U\subset \mrm$ (for some $m\in \mn$), $x(\cdot)$ is the state variable valued in $\mr^n$ (for some $n\in \mn$), and $b,\sigma:[0,T]\times \mrn\times U\times\Omega\to \mrn$, $f:[0,T]\times \mrn\times U\times\Omega\to \mr$ and $h:\mrn\times\Omega\to \mr$ are given functions (satisfying some conditions to be given later). As usual, when the context is clear, we omit the $\omega(\in\Omega)$ argument in the defined functions.

Denote by $\mb(\mx)$ the Borel $\sigma$-field of a metric space $\mx$, and by $\mmu_{ad}$ the set of $\mb([0,T])\otimes\mf$-measurable and $\mmf$-adapted stochastic processes valued in $U$. Any $u(\cdot)\in \mmu_{ad}$ is called an admissible control.
The stochastic optimal control problem considered in this paper is to find a control
$\bar{u}(\cdot)\in\mmu_{ad}$  such that
\begin{equation}\label{minimum J}
J(\bar{u}(\cdot))=\inf_{u(\cdot)\in \mmu_{ad}}J(u(\cdot)).
\end{equation}
Any  $\bar{u}(\cdot)\in \mmu_{ad}$ satisfying (\ref{minimum J}) is called an optimal control. The corresponding state  $\bar{x}(\cdot)$ (to (\ref{controlsys})) is called an optimal state, and $(\bar{x}(\cdot),\bar{u}(\cdot))$ is called an optimal pair.

In optimal control theory, one of the central topics is to establish the first-order necessary  condition for optimal controls. We refer to  \cite{Kushner72} for an early study on the first-order necessary condition for stochastic optimal controls. After that, many authors contributed on this topic, see \cite{Bensoussan81, Bismut78, Haussmann76} and references cited therein. Compared to the deterministic setting, new phenomenon and difficulties appear when the diffusion term of the stochastic control system contains the control variable and the control region is nonconvex.  The corresponding first-order necessary condition for this general case was established in \cite{Peng90}.

For some optimal controls, it may happen that the first-order necessary conditions turn out to be trivial. For deterministic control systems, there are two types of such optimal controls. One of them, called the singular optimal control in the classical sense, is the optimal control for which the gradient and the Hessian of the corresponding Hamiltonian with respect to the control variable vanish/degenerate. The other one, called the singular optimal control in the sense of Pontryagin-type maximum principle, is the optimal control for which the corresponding Hamiltonian is equal to a constant in the control region. When an optimal control is singular, the first-order necessary condition cannot provide enough information for the theoretical analysis and numerical computing, and therefore one needs to study the second-order necessary conditions. In the deterministic setting, one can find many references in this direction (See \cite{BellJa75, CA78, Frankowska13, Gabasov72, Goh66, Knobloch81, Krener77, Lou10} and references cited therein).

Compared to the deterministic control systems, there are only two papers (\cite{Bonnans12, Tang10}) addressed to the second-order necessary condition for stochastic optimal controls. In \cite{Tang10}, a pointwise second-order maximum principle for stochastic singular optimal controls in the sense of Pontryagin-type maximum principle was established for the case that the diffusion term $\sigma(t,x,u)$ is independent of the control $u$; while in \cite{Bonnans12}, an integral-type second-order necessary  condition for stochastic optimal controls was derived under the assumption that the control region $U$ is convex.

The main purpose of this paper is to establish a \emph{pointwise} second-order necessary condition for stochastic optimal controls.  In this work, both drift and diffusion terms, i.e., $b(t,x,u)$ and $\sigma(t,x,u)$, may contain the control variable $u$, and we assume that the control region $U$ is convex. The key difference between \cite{Bonnans12} and our work is that we consider here the \emph{pointwise} second-order necessary condition, which is easier to be verified in practical applications. We remark that, quite different from the deterministic setting, there exist some essential difficulties to derive the pointwise second-order necessary condition from an integral-type one when the diffusion term of the control system contains the control variable, \emph{even for the case of convex control constraint} (See the first 4 paragraphs of Subsection \ref{sub3.2} for a detailed explanation). We overcome these difficulties by means of some technique from
the Malliavin calculus. The method developed in this work can be adopted to establish a pointwise second-order necessary condition for stochastic optimal controls for the general case when the control region is nonconvex but the analysis is much more complicated, and therefore we shall give the details in another paper \cite{zhangH14b}.

The rest of this paper is organized as follows. In Section 2, we list some notations, spaces and preliminary results from Malliavin calculus. In Section 3, we introduce the main results of this paper and give some examples. Finally, in Section 4 we give the proofs of the main results.

\section{Some preliminaries}
In this section, we present some preliminaries.
\subsection{Some notations and spaces}

We introduce some notations and spaces which will be used in the sequel.

Denote by $\inner{\cdot}{\cdot}$ and $|\cdot|$ respectively the inner product and norm in $\mrn$ or $\mrm$, which can be identified from the contexts. Let $\mr^{n\times m}$ be the space of all $n\times m$-matrices. For any $A\in \mr^{n\times m}$, denote by $A^{\top}$ the transpose of $A$ and by
$|A|=\sqrt{tr\{AA^{\top}\}}$ the norm of $A$. Also, write $\mathbf{S}^n:=\big\{A\in \mr^{n\times n}\big|\ A^{\top}=A\big\}$.

Let $\varphi: [0,T]\times\mrn\times U\times \Omega\to \mr^{d}$ be a given function. For a.e. $(t,\omega)\in [0,T]\times\Omega$, we denote by $\varphi_{x} (t,x,u)$,  $\varphi_{u} (t,x,u)$ the first order partial derivatives of $\varphi$ with respect to $x$ and $u$ at $(t,x,u,\omega)$, by $\varphi_{(x,u)^2}(t,x,u)$ the Hessian of $\varphi$ with respect to $(x,u)$ at $(t,x,u,\omega)$, and by $\varphi_{xx} (t,x,u)$, $\varphi_{xu} (t,x,u)$, $\varphi_{uu} (t,x,u)$ the second order partial derivatives of $\varphi$ with respect to $x$ and $u$ at $(t,x,u,\omega)$.

For any $\alpha,\beta\in [1,+\infty)$ and $t\in[0,T]$, we denote by $L_{\mf_{t}}^{\beta}(\Omega; \mrn)$ the space of $\mrn$-valued, $\mf_{t}$ measurable random variables $\xi$ such that $\me~|\xi|^{\beta}<+\infty$; by $L^{\beta}([0,T]\times\Omega; \mrn)$ the space of $\mrn$-valued, $\mb([0,T])\otimes \mf$-measurable processes $\varphi$ such that $\|\varphi\|_{\beta}:=\big[\me\int_{0}^{T}|\varphi(t)|^{\beta}dt
\big]^{\frac{1}{\beta}} <+\infty$;
by $L_{\mmf}^{\beta}(\Omega; L^{\alpha}(0,T; \mrn))$ the space of $\mrn$-valued, $\mb([0,T])\otimes \mf$-measurable, $\mmf$-adapted processes $\varphi$ such that $\|\varphi\|_{\alpha,\beta}:=\big[\me~\big(\int_{0}^{T}|\varphi(t)|^{\alpha}dt\big)
^{\frac{\beta}{\alpha}}\big]^{\frac{1}{\beta}} <+\infty$;
by $L_{\mmf}^{\beta}(\Omega; C([0,T]; \mrn))$ the space of $\mrn$-valued, $\mb([0,T])\otimes \mf$-measurable, and $\mmf$-adapted  continuous processes $\varphi$  such that $\|\varphi\|_{\infty,\beta}:=
\big[\me~\big(\sup_{t\in[0,T]}|\varphi(t)|^{\beta}\big)\big]^{\frac{1}{\beta}} <+\infty$, by $L^{\infty}([0,T]\times\Omega; \mrn)$ the space of $\mrn$-valued, $\mb([0,T])\otimes \mf$-measurable processes $\varphi$ such that $\|\varphi\|_{\infty}:=\mbox{ess sup}_{(t,\omega)\in [0,T]\times\Omega}|\varphi(t,\omega)| <+\infty $; and by
$L^{\beta}(0,T;  L_{\mmf}^{\beta}([0,T]\times\Omega; \mrn))$ the $\mrn$-valued, $\mb([0,T])\otimes \mb([0,T])\otimes\mf$ measurable functions $\varphi$ such that for any $t\in[0,T]$, $\varphi(\cdot,t)$ is $\mmf$-adapted and $\|\varphi\|_{\beta}
:=\Big[\me\int_{0}^{T}\int_{0}^{T}|\varphi(s,t)|^{\beta}dsdt
\Big]^{\frac{1}{\beta}}<+\infty$.

\subsection{Some concepts and results from Malliavin calculus}
In this subsection, we recall some concepts and results from Malliavin calculus (See \cite{Nualart06} for a detailed discussion on this topic).

Denote by $C_{b}^{\infty}(\mrd; \mrn)$ the set of $C^{\infty}$-smooth functions with bounded partial derivatives. For any $h\in L^2(0,T)$, write $W(h)=\int_{0}^{T}h(t)dW(t)$. Define
\begin{equation}\label{mJ}
\begin{array}{ll}
\mathcal{S}:=\Big\{\zeta=\varphi(W(h_{1}),\ W(h_{2}),\ \cdots,\  W(h_{d}))\ \Big|\
\varphi\in C_{b}^{\infty}(\mrd; \mrn),\ d\in \mn,\\
 \qquad\qquad\qquad\qquad\qquad\qquad    h_1,h_2,\cdots,h_d\in L^2(0,T) \Big\}.
\end{array}
\end{equation}
Clearly, $\mathcal{S}$ is a linear subspace of $L^{2}_{\mf_{T}}(\Omega; \mrn)$. For any $\zeta\in \mathcal{S}$ (in the form of that in (\ref{mJ})), its Malliavin derivative is defined as follows:
$$\dd_{s}\zeta:=\sum_{i=1}^{d}h_{i}(s)\frac{\partial\varphi}{\partial x_{i}}(W(h_{1}),\ W(h_{2}),\ \cdots,\  W(h_{d})), \ \ \ \ \ s\in[0,T].$$
Write
$$|||\zeta|||_{2}:=\Big[\me ~|\zeta|^2+\me\int_{0}^{T}|\dd_{s}\zeta|^2ds\Big]^{\frac{1}{2}}.$$
Obviously, $|||\cdot|||_{2}$ is a norm on $\mathcal{S}$. It is shown in \cite {Nualart06} that the operator $\dd$ has a closed extension to the space $\md^{1,2}(\mrn)$, the completion of $\mathcal{S}$ with respect to the norm $|||\cdot|||_{2}$. When $\zeta\in \md^{1,2}(\mrn)$, the following Clark-Ocone representation formula holds:
\begin{equation}\label{clark-ocone formula}
\zeta=\me ~\zeta+\int_{0}^{T}\me~(\dd_{s} \zeta\ |\ \mf_{s})dW(s).
\end{equation}
Furthermore, if $\zeta$ is $\mf_{t}$-measurable, then $\dd_{s}\zeta=0$ for any $s\in(t,T]$.

Define $\ml^{1,2}(\mrn)$ to be the space of processes $\varphi\in L^{2}([0,T]\times\Omega; \mrn)$
such that
\begin{enumerate}[{\rm (i)}]
  \item For $a.e.$ $t\in[0,T]$, $\varphi(t,\cdot)\in \md^{1,2}(\mrn)$;
  \item The function $\dd_{s}\varphi(t, \omega):\ [0,T]\times[0,T]\times\Omega\to\mrn $ admits a measurable version; and
  \item $\displaystyle |||\varphi|||_{1,2}:=\Big[\me\int_{0}^{T}|\varphi(t)|^2dt
      +\me\int_{0}^{T}\int_{0}^{T}|\dd_{s}\varphi(t)|^2dsdt\Big]^{\frac{1}{2}}<+\infty.$
\end{enumerate}
Denote by $\ml_{\mmf}^{1,2}(\mrn)$ the set of all adapted processes in $\ml^{1,2}(\mrn)$.

In addition, write
\begin{eqnarray*}
& &\ml_{2^+}^{1,2}(\mrn):=\Big\{\varphi(\cdot)\in\ml^{1,2}(\mrn)\Big|\ \exists\ \dd^{+}\varphi(\cdot)\in L^2([0,T]\times\Omega;\mrn)\ \mbox{such that}\\
& &\qquad\quad f_{\ephs}(s):=\sup_{s<t<(s+\varepsilon)\wedge T}
\me~\big|\dd_{s}\varphi(t)-\dd^{+}\varphi(s)\big|^2<\infty,\ a.e.\ s\in [0,T],\\
& &\quad\qquad f_{\ephs}(\cdot)\ \mbox{is measurable on }\ [0,T]\ \mbox{for any } \varepsilon>0,\  \mbox{and}\ \lim_{\varepsilon\to 0^+}\int_{0}^{T}f_{\ephs}(s)ds=0\Big\};
\end{eqnarray*}
\begin{eqnarray*}
& &\ml_{2^-}^{1,2}(\mrn):=\Big\{\varphi(\cdot)\in\ml^{1,2}(\mrn)\Big|\ \exists\ \dd^{-}\varphi(\cdot)\in L^2([0,T]\times\Omega;\mrn)\ \mbox{such that}\\
& &\qquad\quad g_{\ephs}(s):=\sup_{(s-\varepsilon)\vee 0<t<s}
\me~\big|\dd_{s}\varphi(t)-\dd^{-}\varphi(s)\big|^2<\infty,\ a.e.\ s\in [0,T],\\
& &\qquad\quad g_{\ephs}(\cdot)\ \mbox{is measurable on}\ [0,T]\ \mbox{for any } \varepsilon>0,\ \mbox{and}\ \lim_{\varepsilon\to 0^+}\int_{0}^{T}g_{\ephs}(s)ds=0\Big\}.
\end{eqnarray*}
Denote
$$\ml_{2}^{1,2}(\mrn)=\ml_{2^+}^{1,2}(\mrn)\cap\ml_{2^-}^{1,2}(\mrn).$$
For any $\varphi(\cdot)\in \ml_{2}^{1,2}(\mrn)$,  denote
$\nabla\varphi(\cdot)=\dd^{+}\varphi(\cdot)+\dd^{-}\varphi(\cdot)$.

When $\varphi$ is adapted, $\dd_{s}\varphi(t)=0$ for any $t<s$. In this case, $\dd^{-}\varphi(\cdot)=0$, and $\nabla\varphi(\cdot)=\dd^{+}\varphi(\cdot)$. Denote by $\ml_{2,\mmf}^{1,2}(\mrn)$ the set of all adapted processes in $\ml_{2}^{1,2}(\mrn)$.

Roughly speaking, an element $\varphi\in\ml_{2}^{1,2}(\mrn)$ is a stochastic process whose  Malliavin derivative has suitable continuity on some neighbourhood of $\{(t,t)\ |\  t\in [0,T]\}$. Examples of such process can be found in \cite{Nualart06}. Especially, if $(s,t)\mapsto \dd_{s}\varphi(t)$ is continuous from $V_{\delta}:=\{(s,t)\big|\ |s-t|<\delta,\ s,t\in [0,T]\}$ (for some $\delta>0$) to $L_{\mf_{T}}^{2}(\Omega;\mrn)$, then $\varphi\in\ml_{2}^{1,2}(\mrn)$ and, $\dd^{+}\varphi(t)=\dd^{-}\varphi(t)=\dd_{t}\varphi(t)$.

To end this section, we show the following technical result which will be use in the sequel.

\begin{lemma}\label{lemma for malliavin deriv}
Let $\varphi(\cdot)\in \ml_{2,\mmf}^{1,2}(\mrn)$. Then, there exists a sequence $\{\theta_{n}\}_{n=1}^{\infty}$ of positive numbers such that $\theta_n\to0^+$ as $n\to\infty$ and
\begin{equation}\label{equence converg for malliavin deriv}
\lim_{n\to \infty}\frac{1}{\theta_{n}^2}\int_{\t}^{\t+\theta_{n}}
\int_{\t}^{t}\me~\big|D_{s}\varphi(t)-\nabla \varphi(s)\big|^2dsdt
=0,\quad a.e.\ \t\in[0,T].
\end{equation}
\end{lemma}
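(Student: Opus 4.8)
The plan is to dominate the double integral pointwise by the function $f_\ephs$ appearing in the definition of $\ml_{2^+}^{1,2}(\mrn)$, and then to select a good sequence of window sizes through an $L^1$-to-almost-everywhere argument. First I would use that $\varphi$ is adapted, so that $\nabla\varphi=\dd^{+}\varphi$ and $\dd_s\varphi(t)=0$ for $t<s$; in particular the $D_s\varphi(t)$ of the statement is just the Malliavin derivative $\dd_s\varphi(t)$. The key observation is that on the integration triangle $\{(s,t):\t<s<t<\t+\theta\}$ one has $0<t-s<\theta$, hence $s<t<(s+\theta)\wedge T$, so directly from the definition of $f_\theta$,
$$\me\,\big|\dd_s\varphi(t)-\nabla\varphi(s)\big|^2\le f_\theta(s)\qquad\mbox{for a.e. }s\in[0,T].$$
This trades the awkward difference between $\dd_s\varphi(t)$ and $\nabla\varphi(s)$ for the single quantity $f_\theta(s)$, whose $L^1$-norm we already control.

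Next I would apply Fubini's theorem twice. For fixed $\t$ and $\theta$, integrating the bound above over the triangle and exchanging the order of integration gives
$$\frac{1}{\theta^2}\int_\t^{(\t+\theta)\wedge T}\!\int_\t^t f_\theta(s)\,ds\,dt
=\frac{1}{\theta^2}\int_\t^{(\t+\theta)\wedge T}\! f_\theta(s)\big((\t+\theta)\wedge T-s\big)\,ds
\le\frac{1}{\theta}\int_\t^{(\t+\theta)\wedge T}\! f_\theta(s)\,ds=:G_\theta(\t),$$
so the left-hand side of (\ref{equence converg for malliavin deriv}) with $\theta=\theta_n$ is bounded by $G_{\theta_n}(\t)$. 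Integrating $G_\theta$ in $\t$ over $[0,T]$ and exchanging order once more, the inner $\t$-set $\{\t:\,s-\theta<\t<s\}\cap[0,T]$ has length at most $\theta$, whence $\int_0^T G_\theta(\t)\,d\t\le\int_0^T f_\theta(s)\,ds$. The measurability hypotheses on $f_\ephs$ make $G_\theta$ measurable, so both Fubini steps are legitimate.

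Finally I would invoke the defining property $\lim_{\ephs\to0^+}\int_0^T f_\ephs(s)\,ds=0$: choosing $\theta_n\to0^+$ with $\int_0^T f_{\theta_n}(s)\,ds\le 2^{-n}$ yields $\sum_n\int_0^T G_{\theta_n}(\t)\,d\t<\infty$, hence by Tonelli $\sum_n G_{\theta_n}(\t)<\infty$ and so $G_{\theta_n}(\t)\to0$ for a.e. $\t\in[0,T]$; combined with the pointwise domination this gives (\ref{equence converg for malliavin deriv}). I expect the only genuinely delicate point to be the coupling between the averaging window and the index $\ephs$ in $f_\ephs$ — one is \emph{not} free to fix $\ephs$ and then shrink the window independently. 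The domination $I_n(\t)\le G_{\theta_n}(\t)$ together with the $L^1$-convergence of $G_{\theta_n}$ dissolves this difficulty cleanly, at the harmless cost of passing to a subsequence, which is precisely why the conclusion only asserts the existence of \emph{some} sequence $\{\theta_n\}$.
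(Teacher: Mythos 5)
Your proposal is correct and is essentially the paper's own argument: the paper likewise bounds the $\tau$-average of $\frac{1}{\theta^2}\int_\t^{\t+\theta}\int_\t^t\me|\dd_s\varphi(t)-\nabla\varphi(s)|^2\,ds\,dt$ by $\int_0^T f_\theta(s)\,ds$ (via a sup-then-substitution chain rather than your direct Fubini computation, but yielding the same $L^1$ estimate) and then concludes from $\int_0^T f_\theta\to 0$. The only difference is cosmetic: you spell out the subsequence extraction (summable $L^1$ norms, hence a.e.\ convergence along $\{\theta_n\}$), a step the paper leaves implicit in ``which implies (2.2).''
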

\begin{proof}
For any $\tau, \theta\in [0,\infty)$, we take the convention that
$$\sup_{t\in [\t,\t+\theta]\cap[0,T]}
\me\big|D_{\t}\varphi(t)-\nabla\varphi(\t)\big|^2=0$$ whenever $[\t,\t+\theta]\cap[0,T]=\emptyset$.
From the definition of $\ml_{2,\mmf}^{1,2}(\mrm)$, it follows that
\begin{eqnarray*}
& &\lim_{\theta\to 0^+}\frac{1}{\theta^2}\int_{0}^{T}\int_{\t}^{\t+\theta}
\int_{\t}^{t}\me\big|D_{s}\varphi(t)-\nabla \varphi(s)\big|^2dsdtd\t\\
&=&\lim_{\theta\to 0^+}\frac{1}{\theta^2}\int_{0}^{T}\int_{\t}^{\t+\theta}
\int_{s}^{\t+\theta}\me\big|D_{s}\varphi(t)-\nabla\varphi(s)\big|^2dtdsd\t\\
&\le&\lim_{\theta\to 0^+}\frac{1}{\theta}\int_{0}^{T}\int_{\t}^{\t+\theta}
\Big[\sup_{t\in [s,s+\theta]\cap[0,T]}\me
\big|D_{s}\varphi(t)-\nabla\varphi(s)\big|^2\Big]dsd\t\\
&\le&\lim_{\theta\to 0^+}\frac{1}{\theta}\int_{0}^{T}\int_{0}^{\theta}
\Big[\sup_{t\in [s+\t,s+\t+\theta]\cap[0,T]}
\me\big|D_{s+\t}\varphi(t)
-\nabla \varphi(s+\t)\big|^2\Big]dsd\t\\
&\le&\lim_{\theta\to 0^+}\frac{1}{\theta}\int_{0}^{\theta}\int_{0}^{T}
\Big[\sup_{t\in [s+\t,s+\t+\theta]\cap[0,T]}
\me\big|D_{s+\t}\varphi(t)-\nabla\varphi(s+\t)\big|^2\Big]d\t ds \\
&\le&\lim_{\theta\to 0^+}\frac{1}{\theta}\int_{0}^{\theta}\int_{s}^{T}
\Big[\sup_{t\in [\t,\t+\theta]\cap[0,T]}
\me\big|D_{\t}\varphi(t)-\nabla\varphi(\t)\big|^2\Big]d\t ds \\
&\le&\lim_{\theta\to 0^+}\frac{1}{\theta}\int_{0}^{\theta}\int_{0}^{T}
\Big[\sup_{t\in [\t,\t+\theta]\cap[0,T]}
\me\big|D_{\t}\varphi(t)-\nabla\varphi(\t)\big|^2\Big]d\t ds \\
&\le&\lim_{\theta\to 0^+}\int_{0}^{T}
\Big[\sup_{t\in [\t,\t+\theta]\cap[0,T]}
\me\big|D_{\t}\varphi(t)-\nabla\varphi(\t)\big|^2\Big]d\t  \\
&=& 0,
\end{eqnarray*}
which implies (\ref{equence converg for malliavin deriv}).
\end{proof}

\section{Second-order necessary conditions}
In this section, we shall present several second-order necessary conditions for stochastic optimal controls.

To begin with, we assume that

\begin{enumerate}
  \item [{\bf (C1)}] {\em The control region $U $ is nonempty, bounded, and convex.}
  \item [{\bf (C2)}] {\em The functions $b$, $\sigma$, $f$, and $h$ satisfy the following:}
  \begin{enumerate}[{\rm (i)}]
      \item {\em For any $(x, u)\in \mrn\times U$, the stochastic processes
             $b(\cdot, x, u):\ [0,T]\times\Omega\to \mrn$ and $\sigma(\cdot, x, u):\ [0,T]\times\Omega\to \mrn$
             are $\mb([0,T])\otimes\mf$-measurable and $\mmf$-adapted. For a.e. $(t,\omega)\in [0, T]\times\Omega$, the functions
             $b(t, \cdot, \cdot):\ \mrn\times U\to \mrn$ and $\sigma(t, \cdot, \cdot):\ \mrn\times U\to \mrn$
             are continuously differentiable up to order $2$, and all of their partial derivatives are uniformly bounded (with respect to $(t,\omega)\in [0, T]\times\Omega$).
             There exists a constant $L > 0$ such that for a.e. $(t,\omega)\in [0, T]\times\Omega$ and for any  $x,\ \tilde{x}\in\mrn$ and $u,\ \tilde{u}\in U$,}
             $$
             \left\{
             \begin{array}{l}
              |b(t,0, u)|+|\sigma(t,0, u)|\le L,\\
              |b_{(x,u)^2}(t,x,u)-b_{(x,u)^2}(t,\tilde{x},\tilde{u})|\le L(|x-\tilde{x}|+|u-\tilde{u}|),\\
              |\sigma_{(x,u)^2}(t,x,u)-\sigma_{(x,u)^2}(t,\tilde{x},\tilde{u})|\le L(|x-\tilde{x}|+|u-\tilde{u}|).
             \end{array}\right.
             $$

       \item {\em For any $(x, u)\in \mrn\times U$, the stochastic process
             $f(\cdot, x, u):\ [0,T]\times\Omega\to \mr$ is $\mb([0,T])\otimes\mf$-measurable and $\mmf$-adapted, and the random variable $h(x)$ is $\mf_{T}$-measurable. For a.e. $(t,\omega)\in [0, T]\times\Omega$, the functions $f(t, \cdot, \cdot):\ \mrn\times U\to \mr$ and $h(\cdot):\ \mrn \to \mr$ are continuously differentiable up to order $2$, and for any  $x,\ \tilde{x}\in\mrn$ and $u,\ \tilde{u}\in U$,}
             $$
             \left\{
             \begin{array}{l}
              |f(t,x,u)|\le L(1+|x|^{2}+|u|^{2}),\\
              |f_{x}(t,x,u)|+|f_{u}(t,x,u)|\le L(1+|x|+|u|),\\
              |f_{xx}(t,x,u)|+|f_{xu}(t,x,u)|+|f_{uu}(t,x,u)|\le L,\\
              |f_{(x,u)^2}(t,x,u)-f_{(x,u)^2}(t,\tilde{x},\tilde{u})|\le L(|x-\tilde{x}|+|u-\tilde{u}|),\\
              |h(x)|\le L(1+|x|^{2}),\ |h_{x}(x)| \le L(1+|x|),\\
              |h_{xx}(x)| \le L, \ |h_{xx}(x)-h_{xx}(\tilde{x})|\le L|x-\tilde{x}|.
             \end{array}\right.
             $$
  \end{enumerate}
\end{enumerate}

When the condition $(C2)$ is satisfied, the state $x(\cdot)$ (of (\ref{controlsys})) is uniquely defined by any given initial datum $x_{0}\in\mr^n$ and admissible control $u(\cdot)\in \mmu_{ad}$, and the cost functional (\ref{costfunction}) is well-defined on $\mmu_{ad}$. In what follows, $C$ represents a generic constant, depending on $T$ and $L$, but independent of any other parameter, which can be different from line to line.

\subsection{Integral-type second-order conditions}

Let $(\bar{x}(\cdot),\bar{u}(\cdot))$ be an optimal pair, and $u(\cdot)\in \mathcal{U}_{ad}$ be any given admissible control.  Let $\varepsilon\in(0,1)$, and write
\begin{equation}\label{3z1}
v(\cdot)=u(\cdot)-\bar{u}(\cdot),\qquad u^{\varepsilon}(\cdot)=\bar{u}(\cdot)+\varepsilon v(\cdot).
\end{equation}
Since $U$ is convex, $u^{\varepsilon}(\cdot)\in \mmu_{ad}$. Denote by  $x^{\varepsilon}(\cdot)$ the state of (\ref{controlsys}) with respect to the control $u^{\varepsilon}(\cdot)$, and put $\delta x(\cdot)=x^{\varepsilon}(\cdot)-\bar{x}(\cdot)$.
For $\varphi=b,\sigma, f$, denote

\vspace{3mm}

\begin{center}
\setlength{\tabcolsep}{0.5pt}
\begin{tabular*}{13cm}{@{\extracolsep{\fill}}lllr}
& $\varphi_{x}(t)=\varphi_{x}(t,\bar{x}(t),\bar{u}(t))$,
& $\varphi_{u}(t)=\varphi_{u}(t,\bar{x}(t),\bar{u}(t))$,\\
& $\varphi_{xx}(t)=\varphi_{xx}(t,\bar{x}(t),\bar{u}(t))$,
& $\varphi_{xu}(t)=\varphi_{xu}(t,\bar{x}(t),\bar{u}(t))$,\\
& $\varphi_{uu}(t)=\varphi_{uu}(t,\bar{x}(t),\bar{u}(t))$.
& ~~\\
\end{tabular*}
\end{center}

\vspace{2mm}

First, similar to \cite{Bonnans12}, we introduce the following two variational equations:
\begin{equation}\label{firstvariequconvex}
\quad\left\{
\begin{array}{l}
dy_{1}(t)= \Big[b_{x}(t) y_{1}(t)+b_{u}(t)v(t)\Big]dt\\
\qquad\qquad+\Big[\sigma_{x}(t) y_{1}(t)+ \sigma_{u}(t)v(t)\Big]dW(t),\qquad t\in[0,T],\qquad\quad\qquad\quad\\
y_{1}(0)=0
\end{array}\right.
\end{equation}
and
\begin{equation}\label{secondvariequconvex}
\quad\left\{
\begin{array}{l}
dy_{2}(t)= \Big[b_{x}(t)
y_{2}(t)+y_{1}(t)^{\top}b_{xx}(t)y_{1}(t)+2v(t)^{\top}b_{xu}(t)y_{1}(t)\\
\qquad\qquad+v(t)^{\top}b_{uu}(t)v(t)\Big]dt
+\Big[\sigma_{x}(t)y_{2}(t)+y_{1}(t)^{\top}\sigma_{xx}(t)y_{1}(t)\\
\qquad\qquad+2v(t)^{\top}\sigma_{xu}(t)y_{1}(t)
+v(t)^{\top}\sigma_{uu}(t)v(t)\Big]dW(t),\qquad t\in[0,T],\\
y_{2}(0)=0.
\end{array}\right.
\end{equation}

By (\ref{firstvariequconvex})--(\ref{secondvariequconvex}) and similar to \cite[Lemmas 3.5 and 3.11]{Bonnans12}, one has the following estimates.

\vspace{1.5mm}

\begin{proposition}\label{estimateofvariequ}
Let (C2) hold. Then, for any $\kappa\ge 2$,
\begin{eqnarray*}
& & \|y_{1}\|_{\infty,\kappa}^\kappa\le C,
\quad
\|y_{2}\|_{\infty,\kappa}^\kappa\le C,
\quad
\|\delta x\|_{\infty,\kappa}^\kappa\le C\varepsilon^\kappa,
\\
& &
\|\delta x-\varepsilon y_{1}\|_{\infty,\kappa}^\kappa\le C\varepsilon^{2\kappa},
\quad
\|\delta x-\varepsilon y_{1}-\frac{\varepsilon^2}{2}y_{2}\|_{\infty,\kappa}^\kappa\le C\varepsilon^{3\kappa}.
\end{eqnarray*}
\end{proposition}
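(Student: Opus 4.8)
The plan is to establish the five estimates by treating each equation as a linear SDE with inhomogeneous terms, applying standard $L^\kappa$-moment estimates for SDEs, and then bootstrapping via the differences. First I would recall the basic a priori estimate: for a linear SDE of the form $dz=[b_x(t)z+F(t)]dt+[\sigma_x(t)z+G(t)]dW(t)$ with $z(0)=z_0$, the boundedness of $b_x,\sigma_x$ (guaranteed by (C2)(i)) together with the Burkholder--Davis--Gundy inequality and Gr\"onwall's lemma yields
\begin{equation*}
\|z\|_{\infty,\kappa}^\kappa\le C\Big(\me|z_0|^\kappa+\Big\|\int_0^\cdot F(t)\,dt\Big\|_{\infty,\kappa}^\kappa+\Big\|\int_0^\cdot G(t)\,dW(t)\Big\|_{\infty,\kappa}^\kappa\Big).
\end{equation*}
Applying this to \eqref{firstvariequconvex}, the inhomogeneous terms are $b_u(t)v(t)$ and $\sigma_u(t)v(t)$; since $b_u,\sigma_u$ are bounded and $v(\cdot)=u(\cdot)-\bar u(\cdot)$ is bounded (as $U$ is bounded by (C1)), these terms lie in every $L^\kappa$, giving $\|y_1\|_{\infty,\kappa}^\kappa\le C$. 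The same argument applied to \eqref{secondvariequconvex} gives $\|y_2\|_{\infty,\kappa}^\kappa\le C$, where now the inhomogeneous terms are quadratic in $y_1$ and $v$; here one uses the bounds on the second derivatives $b_{xx},b_{xu},b_{uu}$ (and the $\sigma$-analogues) from (C2)(i), together with the already-established $\|y_1\|_{\infty,2\kappa}^{2\kappa}\le C$ to control terms like $y_1^\top b_{xx}y_1$.

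Next I would handle the three difference estimates by writing Taylor expansions of the coefficients along the optimal trajectory. For $\delta x=x^\varepsilon-\bar x$, subtracting the equations for $x^\varepsilon$ and $\bar x$ and using $u^\varepsilon-\bar u=\varepsilon v$ shows $\delta x$ satisfies a linear SDE whose inhomogeneous terms are $O(\varepsilon)$ in the appropriate norm (from the mean-value form of the increments and boundedness of the first derivatives), yielding $\|\delta x\|_{\infty,\kappa}^\kappa\le C\varepsilon^\kappa$. For the second difference, I would set $\eta_1=\delta x-\varepsilon y_1$ and derive its SDE by subtracting \eqref{firstvariequconvex} (scaled by $\varepsilon$) from the $\delta x$ equation; a first-order Taylor expansion with integral remainder shows the inhomogeneous terms are $O(\varepsilon^2)$, using the already-proved bound $\|\delta x\|_{\infty,\kappa}\le C\varepsilon$ and Lipschitz continuity of the first derivatives. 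Similarly, setting $\eta_2=\delta x-\varepsilon y_1-\tfrac{\varepsilon^2}{2}y_2$ and expanding to second order, the Lipschitz continuity of the \emph{second} derivatives $b_{(x,u)^2},\sigma_{(x,u)^2}$ (the last two inequalities in (C2)(i)) produces a remainder of order $\varepsilon^3$, giving $\|\eta_2\|_{\infty,\kappa}^\kappa\le C\varepsilon^{3\kappa}$.

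The main obstacle is the careful accounting of the Taylor remainders in the two higher-order difference estimates. Specifically, in the expansion
\begin{equation*}
b(t,x^\varepsilon,u^\varepsilon)-b(t,\bar x,\bar u)=b_x(t)\delta x+b_u(t)\varepsilon v+\tfrac12\inner{b_{(x,u)^2}(t)(\delta x,\varepsilon v)}{(\delta x,\varepsilon v)}+R,
\end{equation*}
one must show that the remainder $R$, which involves the difference $b_{(x,u)^2}(t,\xi)-b_{(x,u)^2}(t,\bar x,\bar u)$ evaluated at an intermediate point $\xi$, is controlled using the Lipschitz bound from (C2)(i) by $C(|\delta x|+\varepsilon|v|)^3$, and then that this integrates to the claimed $\varepsilon^{3\kappa}$ order after substituting $\|\delta x\|_{\infty,\cdot}\le C\varepsilon$. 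The delicate point is keeping the moment exponents consistent: bounding a cubic quantity in $L^\kappa$ requires $L^{3\kappa}$ control of $\delta x$ and $v$, so one must invoke the already-established estimates at the exponent $3\kappa$ rather than $\kappa$, which is legitimate since the first three bounds hold for all $\kappa\ge 2$. Once this bookkeeping is arranged, the linear-SDE estimate closes each inequality in turn, and the proof proceeds by the stated induction on the order of approximation, following \cite[Lemmas 3.5 and 3.11]{Bonnans12}.
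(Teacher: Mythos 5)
Your proposal is correct and follows essentially the same route as the paper, which simply defers to \cite[Lemmas 3.5 and 3.11]{Bonnans12}: linear-SDE moment estimates via Burkholder--Davis--Gundy and Gr\"onwall, followed by Taylor expansions with integral remainders for $\delta x-\varepsilon y_1$ and $\delta x-\varepsilon y_1-\frac{\varepsilon^2}{2}y_2$, with the exponent bookkeeping (invoking the earlier bounds at $2\kappa$ and $3\kappa$) handled exactly as you describe. Your explicit appeal to (C1) for the boundedness of $v(\cdot)$ is appropriate, since the constant $C$ is claimed to depend only on $T$ and $L$, which indeed requires the boundedness of $U$ even though the proposition formally cites only (C2).
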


\begin{proof} The proof is very close to that of \cite[Lemmas 3.5 and 3.11]{Bonnans12}, and therefore, we omit the details.
\end{proof}

\vspace{0.5mm}

Next, define the Hamiltonian
\begin{equation}\label{Hamiltonianconvex}
H(t,x,u, y_{1},z_{1})
:=\inner{y_{1}}{b(t,x,u)}+\inner{z_{1}}{\sigma(t,x,u)}-f(t,x,u),
\end{equation}
$(t,x,u,y_{1},z_{1})\in [0,T]\times\mrn\times U\times\mrn\times\mrn.$
We introduce respectively the following two adjoint equations for (\ref{firstvariequconvex})--(\ref{secondvariequconvex}):
\begin{equation}\label{firstajointequconvex}
 \left\{
\begin{array}{l}
dP_{1}(t)=-\Big[b_{x}(t)^{\top}P_{1}(t)\\
\qquad\qquad \qquad\qquad+\sigma_{x}(t)^{\top}Q_{1}(t)
          -f_{x}(t)\Big]dt+Q_{1}(t)dW(t), \  t\in[0,T], \\
P_{1}(T)=-h_{x}(\bar{x}(T))
\end{array}\right.
\end{equation}
and
\begin{equation}\label{secondajointequconvex}
\quad\left\{
\begin{array}{l}
dP_{2}(t)=-\Big[b_{x}(t)^{\top}P_{2}(t)+P_{2}(t)b_{x}(t) +\sigma_{x}(t)^{\top}P_{2}(t)\sigma_{x}(t) +\sigma_{x}(t)^{\top}Q_{2}(t)\\
\qquad\qquad \qquad
+Q_{2}(t)\sigma_{x}(t)+H_{xx}(t)\Big]dt+Q_{2}(t)dW(t),\  t\in[0,T], \\
P_{2}(T)=-h_{xx}(\bar{x}(T)),
\end{array}\right.
\end{equation}
where $H_{xx}(t)=H_{xx}(t,\bar{x}(t),\bar{u}(t), P_{1}(t),Q_{1}(t))$.

From \cite{Peng97}, it is easy to check that, for any $\beta\ge 1$, the equation (\ref{firstajointequconvex}) admits a unique strong solution $(P_{1}(\cdot),Q_{1}(\cdot))\in L_{\mmf}^{\beta}(\Omega; C([0,T]; \mrn))\times
L_{\mmf}^{\beta}(\Omega; L^{2}(0,T; \mrn))$, and (\ref{secondajointequconvex}) admits a unique strong solution
$(P_{2}(\cdot),Q_{2}(\cdot))\in L_{\mmf}^{\beta}(\Omega; C([0,T]; \mathbf{S}^n))\times
L_{\mmf}^{\beta}(\Omega; L^{2}(0,T; \mathbf{S}^n))$.

Also, we define
\begin{eqnarray}\label{S}
\ms(t,x,u,y_{1},z_{1},y_{2},z_{2})
&:=& H_{xu}(t,x,u,y_{1},z_{1})+b_{u}(t,x,u)^{\top}y_{2}\\
& &+\sigma_{u}(t,x,u)^{\top}z_{2}+\sigma_{u}(t,x,u)^{\top}
y_{2}\sigma_{x}(t,x,u),\nonumber
\end{eqnarray}
$(t,x,u,y_{1},z_{1},y_{2},z_{2})\in [0,T]\times\mrn\times U \times\mrn\times\mrn\times \mathbf{S}^n\times \mathbf{S}^n$,
and denote
\begin{equation}\label{S(t)}
\ms(t)= \ms(t,\bar{x}(t),\bar{u}(t),P_{1}(t),Q_{1}(t),P_{2}(t),Q_{2}(t)) ,\quad t\in [0,T].
\end{equation}

We have the following result.

\begin{proposition}\label{variationalformuforconvex}
Let (C1)--(C2) hold. Then, the following variational equality holds for any $u(\cdot)\in\mmu_{ad}$:
\begin{eqnarray}\label{shorttaylor}
\quad& &J(u^{\varepsilon}(\cdot))-J(\bar{u}(\cdot))\\
&=&-\me\int_{0}^{T}\Big[
\varepsilon \inner{H_{u}(t)}{v(t)}
+\frac{\varepsilon^2}{2} \inner{H_{uu}(t)v(t)}{v(t)}\nonumber\\
& &\quad+\frac{\varepsilon^2}{2}
\inner{P_{2}(t)\sigma_{u}(t)v(t)}{\sigma_{u}(t)v(t)}
+\varepsilon^2 \inner{\ms(t)y_{1}(t)}{v(t)}\Big]dt
+o(\varepsilon^{2}), \  (\ephs\to 0^+),\nonumber
\end{eqnarray}
where $H_{u}(t)\!=\!H_{u}(t,\bar{x}(t),\bar{u}(t), P_{1}(t),Q_{1}(t))$,
$H_{uu}(t)\!=\!H_{uu}(t,\bar{x}(t),\bar{u}(t), P_{1}(t),Q_{1}(t))$.
\end{proposition}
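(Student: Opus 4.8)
The plan is to expand $J(u^{\varepsilon}(\cdot))-J(\bar u(\cdot))$ to second order in $\varepsilon$ and then to eliminate every occurrence of the variational states $y_1,y_2$ by duality. First I would write
\[
J(u^{\varepsilon}(\cdot))-J(\bar u(\cdot))=\me\int_{0}^{T}\big[f(t,x^{\varepsilon},u^{\varepsilon})-f(t,\bar x,\bar u)\big]dt+\me\big[h(x^{\varepsilon}(T))-h(\bar x(T))\big],
\]
and Taylor-expand $f$ and $h$ to second order around $(\bar x,\bar u)$, using $u^{\varepsilon}-\bar u=\varepsilon v$ and $\delta x=x^{\varepsilon}-\bar x$. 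Substituting the expansion $\delta x=\varepsilon y_{1}+\frac{\varepsilon^{2}}{2}y_{2}+o(\varepsilon^{2})$ supplied by Proposition \ref{estimateofvariequ} and collecting powers of $\varepsilon$ produces an $O(\varepsilon)$ part $\me\int_{0}^{T}[\inner{f_{x}(t)}{y_{1}}+\inner{f_{u}(t)}{v}]dt+\me\inner{h_{x}(\bar x(T))}{y_{1}(T)}$, an $O(\varepsilon^{2})$ part assembled from $\frac12\inner{f_{x}}{y_{2}}$, $\frac12\inner{f_{xx}y_{1}}{y_{1}}$, $\inner{f_{xu}y_{1}}{v}$, $\frac12\inner{f_{uu}v}{v}$ together with the boundary terms $\frac12\inner{h_{x}(\bar x(T))}{y_{2}(T)}$ and $\frac12\inner{h_{xx}(\bar x(T))y_{1}(T)}{y_{1}(T)}$, plus a remainder.

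The engine of the proof is three applications of It\^o's formula combined with the adjoint equations. Applying it to $\inner{P_{1}(t)}{y_{1}(t)}$ and invoking (\ref{firstvariequconvex}), (\ref{firstajointequconvex}), the $b_{x}$- and $\sigma_{x}$-terms cancel in pairs after taking expectations (the stochastic integrals vanish thanks to the moment bounds of Proposition \ref{estimateofvariequ} and the $L^{\beta}$ estimates on $(P_{1},Q_{1})$), leaving $-\me\inner{h_{x}(\bar x(T))}{y_{1}(T)}=\me\int_{0}^{T}[\inner{f_{x}}{y_{1}}+\inner{H_{u}}{v}+\inner{f_{u}}{v}]dt$, where $H_{u}=b_{u}^{\top}P_{1}+\sigma_{u}^{\top}Q_{1}-f_{u}$. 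Feeding this into the $O(\varepsilon)$ part collapses it to $-\me\int_{0}^{T}\inner{H_{u}(t)}{v(t)}dt$, the $\varepsilon$-term of (\ref{shorttaylor}).

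For the second order I apply It\^o to $\inner{P_{1}(t)}{y_{2}(t)}$ using (\ref{secondvariequconvex}); the same cancellations pair the inhomogeneous drift and diffusion of $y_{2}$ against $(P_{1},Q_{1})$, which equals $\inner{(H_{xx}+f_{xx})y_{1}}{y_{1}}+2\inner{(H_{xu}+f_{xu})y_{1}}{v}+\inner{(H_{uu}+f_{uu})v}{v}$ since $H=\inner{P_{1}}{b}+\inner{Q_{1}}{\sigma}-f$. This identity rewrites $\frac12\inner{f_{x}}{y_{2}}$ and $\frac12\inner{h_{x}}{y_{2}(T)}$; the $f_{xx},f_{xu},f_{uu}$ pieces cancel against the direct Hessian terms, leaving $-\frac12\me\int_{0}^{T}[\inner{H_{xx}y_{1}}{y_{1}}+2\inner{H_{xu}y_{1}}{v}+\inner{H_{uu}v}{v}]dt+\frac12\me\inner{h_{xx}(\bar x(T))y_{1}(T)}{y_{1}(T)}$. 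Finally I apply It\^o to the quadratic form $\inner{P_{2}(t)y_{1}(t)}{y_{1}(t)}$; the symmetry of $P_{2},Q_{2}$ and the precise structure of (\ref{secondajointequconvex}) are exactly what is needed so that the $b_{x},\sigma_{x}$ contributions drop out. The resulting identity supplies the three remaining constituents $b_{u}^{\top}P_{2}$, $\sigma_{u}^{\top}Q_{2}$, $\sigma_{u}^{\top}P_{2}\sigma_{x}$ of $\ms$ (see (\ref{S})), a term $\frac12\inner{P_{2}\sigma_{u}v}{\sigma_{u}v}$, and a compensating $-\frac12\inner{H_{xx}y_{1}}{y_{1}}$; combining with the previous step, the $\inner{H_{xu}y_{1}}{v}$ term completes $\inner{\ms(t)y_{1}}{v}$, the $H_{xx}$-terms cancel, and one is left with precisely the $\varepsilon^{2}$-coefficient of (\ref{shorttaylor}).

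The main obstacle will be the rigorous control of the remainder as $o(\varepsilon^{2})$. This splits into two estimates: (i) the Taylor remainders of $f$ and $h$ are bounded using the Lipschitz continuity of $f_{(x,u)^2}$ and $h_{xx}$ in (C2) together with $\|\delta x\|_{\infty,\kappa}^{\kappa}\le C\varepsilon^{\kappa}$ from Proposition \ref{estimateofvariequ}, which via H\"older's inequality yields $O(\varepsilon^{3})$; and (ii) replacing $\delta x$ by $\varepsilon y_{1}+\frac{\varepsilon^{2}}{2}y_{2}$ inside the linear term and by $\varepsilon y_{1}$ inside the quadratic terms introduces errors controlled respectively by $\|\delta x-\varepsilon y_{1}-\frac{\varepsilon^{2}}{2}y_{2}\|_{\infty,\kappa}^{\kappa}\le C\varepsilon^{3\kappa}$ and $\|\delta x-\varepsilon y_{1}\|_{\infty,\kappa}^{\kappa}\le C\varepsilon^{2\kappa}$. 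Alongside these analytic estimates, the most delicate algebra is the It\^o expansion of the quadratic form $\inner{P_{2}y_{1}}{y_{1}}$, which requires the three-factor product rule and whose cancellations hinge on $P_{2},Q_{2}$ being symmetric and on the exact structure of (\ref{secondajointequconvex}).
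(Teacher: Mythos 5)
Your proposal is correct and follows essentially the same route as the paper's proof: a second-order Taylor expansion of the cost justified by the estimates of Proposition \ref{estimateofvariequ}, followed by exactly the three It\^o-duality computations the paper performs --- on $\inner{P_{1}(t)}{y_{1}(t)}$, $\inner{P_{1}(t)}{y_{2}(t)}$ and the quadratic form $\inner{P_{2}(t)y_{1}(t)}{y_{1}(t)}$ --- with the same cancellations (the $b_x$, $\sigma_x$ pairings, the $f$-Hessian pieces against the direct Hessian terms, and the $H_{xx}$ terms between the second and third identities) assembling $H_u$, $H_{uu}$, $\inner{P_2\sigma_u v}{\sigma_u v}$ and $\ms$. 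Your remainder analysis via the $\|\delta x-\varepsilon y_1\|$ and $\|\delta x-\varepsilon y_1-\frac{\varepsilon^2}{2}y_2\|$ estimates is precisely what the paper delegates to the argument of Bonnans--Silva, so there is nothing to add.
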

\begin{proof}
By (\ref{3z1}), using Taylor's formula and Proposition \ref{estimateofvariequ}, similar to \cite[Subsection 3.2]{Bonnans12}, we have
\begin{eqnarray}\label{taylorexpconvex}
& & J(u^{\varepsilon})-J(\bar{u})\\
&=& \me\int_{0}^{T}\Big[\inner{f_x(t)}{\delta x(t)}+ \varepsilon \inner{f_{u}(t)}{v(t)}+\frac{1}{2}\inner{f_{xx}(t)\delta x(t)}{\delta x(t)}
\nonumber\\
& &
+\varepsilon\inner{f_{xu}(t)\delta x(t)}{v(t)}+\frac{\varepsilon^2}{2}\inner{f_{uu}(t)v(t)}{v(t)}
\Big]dt\nonumber\\
& &+\me \Big[\inner{h_{x}(\bar{x}(T))}{\delta x(T)}+\frac{1}{2}\inner{h_{xx}(\bar{x}(T))\delta x(T)}{\delta x(T)}
\Big]+o(\varepsilon^2)\quad (\ephs\to 0^+)\nonumber\\
&=& \me\int_{0}^{T}\Big[\varepsilon\inner{f_{x}(t)}{ y_{1}(t)}
+\frac{\varepsilon^2}{2}\inner{f_{x}(t)}{y_{2}(t)}
+\varepsilon\inner{f_{u}(t)}{v(t)}\nonumber\\
& &+\frac{\varepsilon^2}{2}\Big(\inner{f_{xx}(t)y_{1}(t)}{y_{1}(t)}
+2\inner{f_{xu}(t)y_{1}(t)}{v(t)}
+\inner{f_{uu}(t)v(t)}{v(t)}\Big)\Big]dt\nonumber\\
& &
+\me \Big[\varepsilon \inner{h_{x}(\bar{x}(T))}{y_{1}(T)}
+\frac{\varepsilon^2}{2} \inner{h_{x}(\bar{x}(T))}{y_{2}(T)} \nonumber\\
& &+\frac{\varepsilon^2}{2}\inner{h_{xx}(\bar{x}(T))y_{1}(T)}{y_{1}(T)}\Big]
+ o(\varepsilon^2),\quad (\ephs\to 0^+).\nonumber
\end{eqnarray}

By It\^{o}'s formula, we have
\begin{eqnarray}\label{hxty1}
& &\me ~\inner{h_{x}(\bar{x}(T))}{y_{1}(T)}
=-\me ~\inner{P_{1}(T)}{y_{1}(T)}\\
&=&-\me\int_{0}^{T}\Big[ \inner{P_{1}(t)}{b_{u}(t)v(t)}
+\inner{Q_{1}(t)}{\sigma_{u}(t)v(t)}+\inner{f_{x}(t)}{y_{1}(t)}
\Big]dt,\nonumber
\end{eqnarray}
\begin{eqnarray}\label{hxty2}
& &\me ~\inner{h_{x}(\bar{x}(T))}{y_{2}(T)}
= -\me ~\inner{P_{1}(T)}{y_{2}(T)}\\
&=&-\me\int_{0}^{T}\Big[ \inner{P_{1}(t)}{y_{1}(t)^{\top}b_{xx}(t)y_{1}(t)}
+2\inner{P_{1}(t)}{v(t)^{\top}b_{xu}(t)y_{1}(t)}\nonumber\\
& &\quad\quad\quad\quad
+\inner{P_{1}(t)}{v(t)^{\top}b_{uu}(t)v(t)}
+\inner{Q_{1}(t)}{y_{1}(t)^{\top}\sigma_{xx}(t)y_{1}(t)}\nonumber\\
& &\quad\quad\quad\quad
+2\inner{Q_{1}(t)}{v(t)^{\top}\sigma_{xu}(t)y_{1}(t)}
+\inner{Q_{1}(t)}{v(t)^{\top}\sigma_{uu}(t)v(t)}\nonumber\\
& &\quad\quad\quad\quad
+\inner{f_{x}(t)}{y_{2}(t)}\Big]dt,\nonumber
\end{eqnarray}
and (noting that $P_2(t)^{\top}=P_2(t)$ and $Q_2(t)^{\top}=Q_2(t)$)
\begin{eqnarray}\label{hxxty12}
& &\me ~\inner{ h_{xx}(\bar{x}(T))y_{1}(T)}{y_{1}(T)}
= -\me ~\inner{P_{2}(T)y_{1}(T)}{y_{1}(T)}\\
&=&-\me\int_{0}^{T}\Big[\inner{P_{2}(t)y_{1}(t)}{b_{u}(t)v(t)}
+\inner{P_{2}(t)b_{u}(t)v(t)}{y_{1}(t)}\nonumber\\
& &\qquad\quad\quad
+\inner{P_{2}(t)\sigma_{x}(t)y_{1}(t)}{\sigma_{u}(t)v(t)}
+\inner{P_{2}(t)\sigma_{u}(t)v(t)}{\sigma_{x}(t)y_{1}(t)}\nonumber\\
& &\qquad\quad\quad
+\inner{P_{2}(t)\sigma_{u}(t)v(t)}{\sigma_{u}(t)v(t)}
+\inner{Q_{2}(t)\sigma_{u}(t)v(s)}{y_{1}(t)}\nonumber\\
& &\qquad\quad\quad
+\inner{Q_{2}(t)y_{1}(t)}{\sigma_{u}(t)v(t)}
-\inner{H_{xx}(t)y_{1}(t)}{y_{1}(t)}
\Big]dt\nonumber\\&=&-\me\int_{0}^{T}\Big[2\inner{P_{2}(t)y_{1}(t)}{b_{u}(t)v(t)}
+2\inner{P_{2}(t)\sigma_{x}(t)y_{1}(t)}{\sigma_{u}(t)v(t)}
\nonumber\\
& &\qquad\quad\quad
+\inner{P_{2}(t)\sigma_{u}(t)v(t)}{\sigma_{u}(t)v(t)}
+2\inner{Q_{2}(t)\sigma_{u}(t)v(s)}{y_{1}(t)}\nonumber\\
& &\qquad\quad\quad
-\inner{H_{xx}(t)y_{1}(t)}{y_{1}(t)}
\Big]dt.\nonumber
\end{eqnarray}

Substituting (\ref{hxty1}), (\ref{hxty2}) and (\ref{hxxty12}) into (\ref{taylorexpconvex}), we obtain that
\begin{eqnarray*}
& &J(u^{\varepsilon})-J(\bar{u})\nonumber\\
&=&-\me\int_{0}^{T}\Big[\varepsilon\Big(
\inner{P_{1}(t)}{b_{u}(t)v(t)}
+\inner{Q_{1}(t)}{\sigma_{u}(t)v(t)}
-\inner{f_{u}(t)}{v(t)}\Big)\nonumber\\
& &\qquad\quad
+\frac{\varepsilon^2}{2}\Big(\inner{P_{1}(t)}{v(t)^{\top}b_{uu}(t)v(t)}
+\inner{Q_{1}(t)}{v(t)^{\top}\sigma_{uu}(t)v(t)}\nonumber\\
& &\qquad\quad
-\inner{f_{uu}(t)v(t)}{v(t)}\Big)
+\frac{\varepsilon^2}{2}\inner{P_{2}(t)
\sigma_{u}(t)v(t)}{\sigma_{u}(t)v(t)}\nonumber\\
& &\qquad\quad
+\varepsilon^2\Big(
\inner{P_{1}(t)}{v(t)^{\top}b_{xu}(t)y_{1}(t)}
+\inner{Q_{1}(t)}{v(t)^{\top}\sigma_{xu}(t)y_{1}(t)}\nonumber\\
& &\qquad\quad
-\inner{f_{xu}(t)y_{1}(t)}{v(t)}
+\inner{b_{u}(t)^{\top}P_{2}(t)y_{1}(t)}{v(t)}\nonumber\\
& &\qquad\quad
+\inner{\sigma_{u}(t)^{\top}P_{2}(t)\sigma_{x}(t)y_{1}(t)}{v(t)}
\nonumber\\
& &\qquad\quad
+\inner{\sigma_{u}(t)^{\top}Q_{2}(t)y_{1}(t)}{v(t)}\Big)
\Big]dt+o(\varepsilon^{2})\qquad (\ephs\to 0^+)\nonumber\\
&=&-\me\int_{0}^{T}\Big[
\varepsilon\inner{H_{u}(t)}{v(t)}
+\frac{\varepsilon^2}{2}\inner{H_{uu}(t)v(t)}{v(t)}\nonumber\\
& &\qquad\quad+\frac{\varepsilon^2}{2} \inner{
P_{2}(t)\sigma_{u}(t)v(t)}{\sigma_{u}(t)v(t)}\\
& &\qquad\quad+\varepsilon^2\inner{\ms(t)y_{1}(t)}{v(t)}
\Big]dt+o(\varepsilon^{2}),\quad (\ephs\to 0^+).
\end{eqnarray*}
This completes the proof of Proposition \ref{variationalformuforconvex}.
\end{proof}

Now, we establish an integral-type second-order necessary condition for
stochastic optimal controls. Stimulated by \cite{Gabasov72}, we introduce the following notion.
\begin{definition}\label{singularcontrol calssical}
We call a control $\tilde{u}(\cdot)\in \mmu_{ad}$ a singular control in the classical sense if $\tilde{u}(\cdot)$ satisfies
\begin{equation}\label{singularcontrol Hu Huu euql zero}
\quad\ \ \ \ \ \ \  \left\{\!\!\!
\begin{array}{l}
H_{u}(t,\tilde{x}(t), \tilde{u}(t) ,\tilde{P}_{1}(t),\tilde{Q}_{1}(t))=0,\ \  \ a.s., \ a.e.\ t\in [0,T], \\
H_{uu}(t,\tilde{x}(t), \tilde{u}(t) ,\tilde{P}_{1}(t),\tilde{Q}_{1}(t))+\sigma_{u}(t,\tilde{x}(t),\tilde{u}(t))^{\top}
\tilde{P}_{2}(t)\sigma_{u}(t,\tilde{x}(t),\tilde{u}(t))=0,\\
 \ \ \ \ \ a.s., \ a.e.\ t\in [0,T],
\end{array}\right.
\end{equation}
where $\tilde{x}(\cdot)$ is the state with respect to $\tilde{u}(\cdot)$, and $(\tilde{P}_{1}(\cdot),\tilde{Q}_{1}(\cdot))$ and $(\tilde{P}_{2}(\cdot),\tilde{Q}_{2}(\cdot))$ be the adjoint processes given respectively by (\ref{firstajointequconvex}) and (\ref{secondajointequconvex}) with $(\bar x(\cdot),\bar u(\cdot)
)$ replaced by $(\tilde{x}(\cdot),\tilde{u}(\cdot))$.
\end{definition}

\begin{remark}
Since the diffusion term $\sigma(t,x,u)$ contains the control variable $u$, in order to represent the stochastic maximum principle, one needs to introduce the following $\cal H$-function:
  \begin{eqnarray*}
 {\cal H}(t,x,u)
&:=&H(t,x,u, \tilde{P}_{1}(t),\tilde{Q}_{1}(t))-\frac{1}{2}\inner{\tilde P_{2}(t)\sigma(t,\tilde{x}(t),\tilde{u}(t))}
{\sigma(t,\tilde{x}(t),\tilde{u}(t))}\\
& &
+\frac{1}{2}\big\langle\tilde P_{2}(t)\big(\sigma(t,x,u)
-\sigma(t,\tilde{x}(t),\tilde{u}(t))\big),\sigma(t,x,u)-\sigma(t,\tilde{x}(t),\tilde{u}(t))\big\rangle.
\end{eqnarray*}
The stochastic maximum principle (see \cite{Peng90}) says, if $(\tilde{x}(\cdot),\tilde{u}(\cdot))$ is an optimal pair, then
\begin{equation}\label{maximum principle}
{\cal H}(t,\tilde{x}(t),\tilde{u}(t))
=\max_{v\in U} {\cal H}(t,\tilde{x}(t),v),\quad a.s., \ a.e.\ t\in [0,T].
\end{equation}
A singular control in the classical sense is the one that satisfies trivially the first- and second-order necessary conditions in optimization theory for the maximization problem (\ref{maximum principle}), i.e.,
 \begin{equation}\label{singularcontreuql zero}
\quad\ \ \  \left\{\!\!\!
\begin{array}{l}
{\cal H}_{u}(t,\tilde{x}(t), \tilde{u}(t))=0,\ \  \ a.s., \ a.e.\ t\in [0,T], \\
{\cal H}_{uu}(t,\tilde{x}(t), \tilde{u}(t)=0, \ \ \ a.s., \ a.e.\ t\in [0,T].
\end{array}\right.
 \end{equation}
It is easy to see that (\ref{singularcontreuql zero}) is equivalent to (\ref{singularcontrol Hu Huu euql zero}).
On the other hand, one can consider (stochastic) singular optimal controls in other senses, say in the sense of
Pontryagin-type maximum principle. Due to the space limitation, we shall present our results in this respect elsewhere.
\end{remark}

By Proposition \ref{variationalformuforconvex}, we obtain the following integral-type second-order necessary  condition.

\begin{theorem}
Let (C1)--(C2) hold. If $\bar{u}(\cdot)$ is a singular optimal control in the classical sense, then
\begin{equation}\label{integraltype 2ordercondition}
\me\int^{T}_{0}\inner{\ms(t)y_{1}(t)}{v(t)}dt\le 0,
\end{equation}
for any $v(\cdot)=u(\cdot)-\bar{u}(\cdot)$, $u(\cdot)\in \mmu_{ad}$.
\end{theorem}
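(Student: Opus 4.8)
The plan is to combine the variational equality of Proposition~\ref{variationalformuforconvex} with the optimality of $\bar u(\cdot)$, using the two defining identities of a classical singular control to annihilate every term except the cross term. First I would fix an arbitrary $u(\cdot)\in\mmu_{ad}$, set $v(\cdot)=u(\cdot)-\bar u(\cdot)$ and $u^{\varepsilon}(\cdot)=\bar u(\cdot)+\varepsilon v(\cdot)$ for $\varepsilon\in(0,1)$. Since $U$ is convex (condition (C1)), the perturbed control $u^{\varepsilon}(\cdot)$ is admissible, so optimality (\ref{minimum J}) forces $J(u^{\varepsilon}(\cdot))-J(\bar u(\cdot))\ge 0$ for every such $\varepsilon$.

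Next I would substitute the expansion of $J(u^{\varepsilon})-J(\bar u)$ supplied by Proposition~\ref{variationalformuforconvex}. The first identity in Definition~\ref{singularcontrol calssical}, namely $H_u(t)=0$ a.e., eliminates the $O(\varepsilon)$ term $\me\int_0^T\inner{H_u(t)}{v(t)}\,dt$. For the two ``diagonal'' second-order contributions I would use the symmetry $\inner{P_2(t)\sigma_u(t)v(t)}{\sigma_u(t)v(t)}=\inner{\sigma_u(t)^{\top}P_2(t)\sigma_u(t)v(t)}{v(t)}$ to merge them into $\frac{\varepsilon^2}{2}\inner{\bigl(H_{uu}(t)+\sigma_u(t)^{\top}P_2(t)\sigma_u(t)\bigr)v(t)}{v(t)}$, which vanishes by the second identity in Definition~\ref{singularcontrol calssical}. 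Hence the variational equality collapses to $J(u^{\varepsilon})-J(\bar u)=-\varepsilon^2\,\me\int_0^T\inner{\ms(t)y_1(t)}{v(t)}\,dt+o(\varepsilon^2)$ as $\varepsilon\to0^+$, where $y_1(\cdot)$ solves the first variational equation (\ref{firstvariequconvex}).

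Finally I would feed in $J(u^{\varepsilon})-J(\bar u)\ge 0$, divide by $\varepsilon^2$, and let $\varepsilon\to0^+$; the remainder $o(\varepsilon^2)/\varepsilon^2$ then vanishes and the desired inequality (\ref{integraltype 2ordercondition}) follows. I do not expect a genuine obstacle here: the delicate $o(\varepsilon^2)$ control is already packaged into Proposition~\ref{variationalformuforconvex} (itself resting on the estimates of Proposition~\ref{estimateofvariequ}), so the only point requiring care is the algebraic bookkeeping confirming that the two singular-control hypotheses cancel exactly the $O(\varepsilon)$ and the diagonal $O(\varepsilon^2)$ terms, isolating $\inner{\ms(t)y_1(t)}{v(t)}$ as the leading-order quantity.
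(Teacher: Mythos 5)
Your proposal is correct and coincides with the paper's own proof: the paper likewise applies Proposition~\ref{variationalformuforconvex} together with the two identities of Definition~\ref{singularcontrol calssical} to reduce the expansion to $J(u^{\varepsilon})-J(\bar u)=-\varepsilon^2\,\me\int_0^T\inner{\ms(t)y_1(t)}{v(t)}\,dt+o(\varepsilon^2)$, then divides by $\varepsilon^2$ and lets $\varepsilon\to0^+$. Your only addition is making explicit the symmetry step $\inner{P_2(t)\sigma_u(t)v(t)}{\sigma_u(t)v(t)}=\inner{\sigma_u(t)^{\top}P_2(t)\sigma_u(t)v(t)}{v(t)}$, which the paper leaves implicit.
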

\begin{proof}
By (\ref{shorttaylor}) and Definition \ref{singularcontrol calssical}, we have
\begin{equation}\nonumber
0\le\lim_{\varepsilon\to 0^+}\frac{J(u^{\varepsilon})-J(\bar{u})}{\varepsilon^2}
  =-\me\int_{0}^{T}\inner{\ms(t)y_{1}(t)}{v(t)}dt,
\end{equation}
as stated.
\end{proof}

In \cite{Bonnans12}, the authors obtained the following integral-type first- and second-order necessary conditions for stochastic optimal controls:
\begin{theorem}
Let (C1)--(C2) hold.
If $\bar{u}(\cdot)$ is an optimal control, then
$$\int^{T}_{0}\inner{H_{u}(t)}{w(t)}dt\le0,\qquad \forall
\ w(\cdot)\in cl_{2,2}\big(\mathcal{R}_{\mmu_{ad}}(\bar{u})\cap L^{4}_{\mmf}(\Omega;L^{4}(0,T;\mrm))\big).$$
Furthermore, for any  $w(\cdot)\in cl_{4,4}\big(\mathcal{R}_{\mmu_{ad}}(\bar{u})\cap L^{\infty}([0,T]\times \Omega;\mrm)\cap \Upsilon(\bar{u})\big)$ the following second-order necessary condition holds:
\begin{eqnarray}\label{bonnans 2order condition}
& &\me\int^{T}_{0}\Big[\inner{H_{xx}(t)y_{1}(t)}{y_{1}(t)}
+2\inner{H_{xu}(t)y_{1}(t)}{w(t)}\nonumber\\
& &\quad\quad +\inner{H_{uu}(t)w(t)}{w(t)}\Big]dt
 +\me\inner{h_{xx}(\bar{x}(T))y_{1}(T)}{y_{1}(T)}\le0.~~\qquad
\end{eqnarray}
Here,
$$\mathcal{R}_{\mmu_{ad}}(\bar{u})
:=\Big\{\alpha u(\cdot)-\alpha\bar{u}(\cdot)\ \Big|\
u(\cdot)\in \mmu_{ad}, \alpha>0\Big\},$$
$$\Upsilon(\bar{u}):=\Big\{w(\cdot)\in L^{2}_{\mmf}(\Omega;L^{2}(0,T;\mrm))\ \Big|\ \int^{T}_{0}\inner{H_{u}(t)}{w(t)}dt=0\Big\}$$
and, $cl_{2,2}(A)$ and $cl_{4,4}(A)$ are the closure of a set $A$ under the norms $\|\cdot\|_{2,2}$ and $\|\cdot\|_{4,4}$, respectively.
\end{theorem}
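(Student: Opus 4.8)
The plan is to derive both inequalities by a convex perturbation argument resting on the second-order Taylor expansion already prepared in (\ref{taylorexpconvex})--(\ref{shorttaylor}) and the estimates of Proposition~\ref{estimateofvariequ}, and then to pass from the radial cone $\mathcal{R}_{\mmu_{ad}}(\bar{u})$ to its closure by a continuity/density argument; this mirrors the scheme of \cite{Bonnans12}. The admissibility of the perturbations is furnished by the convexity of $U$ (condition (C1)): for any $w=\alpha(u(\cdot)-\bar{u}(\cdot))\in\mathcal{R}_{\mmu_{ad}}(\bar{u})$ with $u(\cdot)\in\mmu_{ad}$ and $\alpha>0$, setting $v=u-\bar{u}$, the controls $u^{\varepsilon}=\bar{u}+\varepsilon v$ lie in $\mmu_{ad}$ for every $\varepsilon\in(0,1)$, so the expansions of the previous subsection apply verbatim.

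First I would prove the first-order inequality on the cone. Keeping only the first-order part of (\ref{shorttaylor}) gives $J(u^{\varepsilon})-J(\bar{u})=-\varepsilon\,\me\int_{0}^{T}\inner{H_{u}(t)}{v(t)}dt+O(\varepsilon^{2})$; dividing the optimality inequality $J(u^{\varepsilon})\ge J(\bar{u})$ by $\varepsilon$ and letting $\varepsilon\to0^{+}$ yields $\me\int_{0}^{T}\inner{H_{u}(t)}{v(t)}dt\le0$, and the same inequality for every $w\in\mathcal{R}_{\mmu_{ad}}(\bar{u})$ follows by positive homogeneity in $\alpha$. Because $(P_{1},Q_{1})$ has $L^{\beta}$-integrability for all $\beta\ge1$ (as recorded after (\ref{secondajointequconvex})) and $b_{u},\sigma_{u}$ are bounded, one has $H_{u}\in L^{2}_{\mmf}(\Omega;L^{2}(0,T;\mrm))$, so $w\mapsto\me\int_{0}^{T}\inner{H_{u}(t)}{w(t)}dt$ is a continuous linear functional in the norm $\|\cdot\|_{2,2}$; the inequality therefore survives the passage to $cl_{2,2}(\cdot)$.

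For the second-order condition I would restrict to the critical cone. Fix $w\in\mathcal{R}_{\mmu_{ad}}(\bar{u})\cap L^{\infty}([0,T]\times\Omega;\mrm)\cap\Upsilon(\bar{u})$ and write $v=w/\alpha$; the constraint $w\in\Upsilon(\bar{u})$ forces the first-order term $\me\int_{0}^{T}\inner{H_{u}(t)}{v(t)}dt$ to vanish, so the $\varepsilon^{2}$-term becomes the leading contribution in (\ref{taylorexpconvex}). The decisive algebraic step is to eliminate $y_{2}(\cdot)$ and the terminal term $\me\inner{h_{x}(\bar{x}(T))}{y_{2}(T)}$ using It\^o's formula against the \emph{first} adjoint process $(P_{1},Q_{1})$ alone, exactly as in (\ref{hxty1})--(\ref{hxty2}) (without invoking the second adjoint equation, hence without (\ref{hxxty12})). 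This converts the $f$- and $h_{x}$-contributions into the Hamiltonian Hessians $H_{xx},H_{xu},H_{uu}$ and retains the terminal second-order term built from $h_{xx}(\bar{x}(T))$; dividing $J(u^{\varepsilon})-J(\bar{u})\ge0$ by $\varepsilon^{2}/2$ and letting $\varepsilon\to0^{+}$ then produces (\ref{bonnans 2order condition}) for every $w$ in this set.

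Finally I would pass to the closure $cl_{4,4}(\cdot)$. The quadratic functional in (\ref{bonnans 2order condition}) factors through the solution map $v\mapsto y_{1}(\cdot)$ of the linear variational equation (\ref{firstvariequconvex}), which by standard SDE estimates is continuous from $L^{4}_{\mmf}(\Omega;L^{4}(0,T;\mrm))$ into $L^{4}_{\mmf}(\Omega;C([0,T];\mrn))$; hence the whole functional is $\|\cdot\|_{4,4}$-continuous, and since $\Upsilon(\bar{u})$ is the kernel of a continuous linear functional it is $\|\cdot\|_{4,4}$-closed, so the inequality extends from the set to its closure. I expect the main obstacle to lie precisely here: arranging that the Taylor remainder is genuinely $o(\varepsilon^{2})$ and that the quadratic form is $L^{4}$-continuous, so that the inequality established on $\mathcal{R}_{\mmu_{ad}}(\bar{u})\cap L^{\infty}\cap\Upsilon(\bar{u})$ survives the closure. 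This is exactly the point that forces the stronger $L^{4}$/$L^{\infty}$ integrability rather than mere $L^{2}$, and where the estimates of Proposition~\ref{estimateofvariequ} carry the analytic weight.
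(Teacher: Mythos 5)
The paper offers no proof of this statement: it is quoted from \cite{Bonnans12}, and the machinery of Subsection 3.1 is developed for the variant (\ref{integraltype 2ordercondition}) rather than for this theorem. Your reconstruction is nevertheless the right one, and it is essentially the argument of \cite{Bonnans12} re-expressed through this paper's own tools: convexity of $U$ makes $u^{\varepsilon}=\bar{u}+\varepsilon v$ admissible, the expansion (\ref{taylorexpconvex}) with the remainder control of Proposition \ref{estimateofvariequ} gives the $o(\varepsilon^{2})$ Taylor formula, and your key structural choice --- dualizing with the \emph{first} adjoint only, i.e.\ using (\ref{hxty1})--(\ref{hxty2}) but deliberately not (\ref{hxxty12}) --- is exactly what keeps the $H_{xx}$, $H_{xu}$, $H_{uu}$ and $h_{xx}$ terms visible instead of absorbing them into $(P_{2},Q_{2})$ and $\ms$ as Proposition \ref{variationalformuforconvex} does. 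The homogeneity step on $\mathcal{R}_{\mmu_{ad}}(\bar{u})$, the $\|\cdot\|_{2,2}$-continuity of the first-order functional (via $H_{u}\in L^{2}$, which follows from the stated integrability of $(P_{1},Q_{1})$ and the growth conditions in (C2)), and the $\|\cdot\|_{4,4}$-continuity of the quadratic form (via the continuity of $v\mapsto y_{1}$ from $L^{4}_{\mmf}(\Omega;L^{4}(0,T;\mrm))$ into $L^{4}_{\mmf}(\Omega;C([0,T];\mrn))$ and H\"{o}lder with exponents $4,4,2$) are all correct; you also correctly reinstate the expectation $\me$ that is missing from the printed first-order condition and from the definition of $\Upsilon(\bar{u})$.

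There is, however, one concrete point you assert but would not actually obtain: carried out faithfully in this paper's conventions, your computation does \emph{not} produce (\ref{bonnans 2order condition}) as printed. With $H=\inner{y_{1}}{b}+\inner{z_{1}}{\sigma}-f$ as in (\ref{Hamiltonianconvex}) and $P_{1}(T)=-h_{x}(\bar{x}(T))$, substituting (\ref{hxty1})--(\ref{hxty2}) into (\ref{taylorexpconvex}) yields $J(u^{\varepsilon})-J(\bar{u})=-\varepsilon\,\me\int_{0}^{T}\inner{H_{u}(t)}{v(t)}dt-\frac{\varepsilon^{2}}{2}\me\int_{0}^{T}\big[\inner{H_{xx}(t)y_{1}(t)}{y_{1}(t)}+2\inner{H_{xu}(t)y_{1}(t)}{v(t)}+\inner{H_{uu}(t)v(t)}{v(t)}\big]dt+\frac{\varepsilon^{2}}{2}\me\inner{h_{xx}(\bar{x}(T))y_{1}(T)}{y_{1}(T)}+o(\varepsilon^{2})$, so on critical directions the conclusion is the quadratic inequality with the terminal term entering as $-\,\me\inner{h_{xx}(\bar{x}(T))y_{1}(T)}{y_{1}(T)}$, not $+$. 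The plus sign in the printed statement is a transcription artifact of the opposite conventions in \cite{Bonnans12} (there $H=f+\inner{p}{b}+\inner{q}{\sigma}$ and $p(T)=+h_{x}$, with the conditions stated as $\ge 0$). A two-line check confirms this: take $n=m=1$, $b=u$, $\sigma=0$, $f=0$, $h(x)=\frac{1}{2}x^{2}$, $x_{0}=0$, $U=[-1,1]$; then $\bar{u}\equiv 0$ is optimal, $P_{1}\equiv Q_{1}\equiv 0$ so $H_{u}\equiv 0$ and every direction is critical, yet for $w\equiv 1$ the printed left-hand side equals $\me\,|y_{1}(T)|^{2}=T^{2}>0$. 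Your method is sound and, executed carefully, proves the (corrected) theorem; the flaw is the claim of verbatim recovery of (\ref{bonnans 2order condition}), which glosses over this sign and should instead have been flagged.
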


There are some second-order terms with respect to $y_{1}(\cdot)$  in
(\ref{bonnans 2order condition}). These terms are eliminated in
(\ref{integraltype 2ordercondition}) by introducing the second-order adjoint process $(P_{2}(\cdot),Q_{2}(\cdot))$.
Note also that, the second-order necessary condition we consider in this paper is for the singular optimal controls in the classical sense, hence the second order terms $\inner{H_{uu}(t)v(t)}{v(t)}$  and $\inner{P_{2}(t)\sigma_{u}(t)v(t)}{\sigma_{u}(t)v(t)}$ appearing in the variational formulation (\ref{shorttaylor}) do not enter into
(\ref{integraltype 2ordercondition}).

\subsection{Second-order necessary condition in term of martingale representation}\label{sub3.2} Let us recall that, in order to derive pointwise necessary conditions for optimal controls, one needs to establish first some suitable integral-type necessary conditions. It is well-known that there is no difficulty to establish the pointwise first-order necessary condition for optimal controls whenever an integral-type one is obtained. However, the classical method of deriving the pointwise condition from the integral-type one cannot be used directly to establish the pointwise second-order condition in the general stochastic setting.

Note that the solution $y_{1}(\cdot)$ to the first variational equation (\ref{firstvariequconvex}) appears in the integral-type second-order condition (\ref{integraltype 2ordercondition}). By \cite[Theorem 1.6.14, p.47]{Yong99}, $y_{1}(\cdot)$ enjoys an explicit representation:
\begin{eqnarray}\label{y1(t)for v(t)}
 y_{1}(t)&=&\Phi(t)\int_{0}^{t}\Phi(s)^{-1}
\big(b_{u}(s)-\sigma_{x}(s)\sigma_{u}(s)
\big)v(s)ds\nonumber\\
& &+\Phi(t)\int_{0}^{t}\Phi(s)^{-1}\sigma_{u}(s)v(s)dW(s),
\end{eqnarray}
where
$\Phi(\cdot)$ is the solution to the following matrix-valued stochastic differential equation
\begin{equation}\label{Phi}
\left\{
\begin{array}{l}
d\Phi(t)= b_{x}(t)\Phi(t)dt+\sigma_{x}(t)\Phi(t)dW(t),
\qquad \ \ \ t\in[0,T], \qquad \\
\Phi(0)=I,
\end{array}\right.
\end{equation}
and $I$ stands for the identity matrix in $\mr^{n\times n}$. Substituting the explicit representation (\ref{y1(t)for v(t)}) of $y_{1}(\cdot)$ into (\ref{integraltype 2ordercondition}), we see that there will appear a ``bad" term of the following form:
\begin{equation}\label{double integral ito and lebesgue}
\me\int_{0}^{T}
\Big\langle\ms(t)\Phi(t)\int_{0}^{t}
\Phi(s)^{-1}\sigma_{u}(s)
v(s)dW(s),
v(t)\Big\rangle dt.
\end{equation}

To see (\ref{double integral ito and lebesgue}) is ``bad", let us choose $\t\in [0,T)$, $v\in U$, $E_{\theta}=[\t,\t+\theta)$ such that $\theta>0$ and $\t+\theta\le T$. Denote by $\chi_{E_{\theta}}(\cdot)$ the characteristic function of the set $E_{\theta}$. As usual, though the control region $U$ is convex, in order to derive a pointwise second-order necessary condition from the integral one (\ref{integraltype 2ordercondition}), people need to choose the following needle variation for the optimal control $\bar{u}(\cdot)$:
\begin{equation}\label{needle variation}
u(t)=\left\{
\begin{array}{l}
v, \qquad\qquad t\in E_{\theta},\\
\bar{u}(t), \qquad \quad t\in [0,T] \setminus E_{\theta}.\\
\end{array}\right.
\end{equation}
For this $u(\cdot)$, it is clear that $v(\cdot)=u(\cdot)-\bar{u}(\cdot)=(v-\bar{u}(\cdot))\chi_{E_{\theta}}(\cdot)$, and (\ref{double integral ito and lebesgue}) is reduced to
\begin{equation}\label{3 per 2 order term}
\me\int_{\t}^{\t+\theta}
\Big\langle\ms(t)\Phi(t)\int_{\t}^{t}
\Phi(s)^{-1}\sigma_{u}(s)
(v-\bar{u}(s))dW(s),
v-\bar{u}(t)\Big\rangle dt.
\end{equation}
Since an It\^{o} integral appears in (\ref{3 per 2 order term}), we have
\begin{eqnarray*}
& &\me\int_{\t}^{\t+\theta}
\Big\langle\ms(t)\Phi(t)\int_{\t}^{t}
\Phi(s)^{-1}\sigma_{u}(s)
(v-\bar{u}(s))dW(s),
v-\bar{u}(t)\Big\rangle dt \\
&\le & \!\Big[\me\int_{\t}^{\t+\theta}\Big|\big(\ms(t)\Phi(t) \big)^{\top}\big( v-\bar{u}(t)\big)
\Big|^{2} dt \!\Big]^{\frac{1}{2}}
\Big[\me\int_{\t}^{\t+\theta}\int_{\t}^{t}
\Big|\Phi(s)^{-1}\sigma_{u}(s)
(v-\bar{u}(s))\Big|^{2}ds dt\! \Big]^{\frac{1}{2}}\\
&=&O(\theta^{\frac{3}{2}}),\quad (\theta\to 0^+).
\end{eqnarray*}
Because of this, it seems that (\ref{3 per 2 order term}) is not an infinitesimal of order $2$ but only that of order $\frac{3}{2}$ with respect to $\theta$ (as $\theta\to 0^+$).

However, by the properties of It\^o's integral, we find that
\begin{eqnarray*}
& &\lim_{\theta\to 0^+}\Big|\frac{1}{\theta^{\frac{3}{2}}}\me\int_{\t}^{\t+\theta}
\Big\langle\ms(t)\Phi(t)\int_{\t}^{t}
\Phi(s)^{-1}\sigma_{u}(s)
(v-\bar{u}(s))dW(s),
v-\bar{u}(t)\Big\rangle dt\Big|\\
&\le&\lim_{\theta\to 0^+}\Big|\frac{1}{\theta^{\frac{3}{2}}}\me\int_{\t}^{\t+\theta}
\Big\langle\big(\ms(t)\Phi(t) \big)^{\top}\big( v-\bar{u}(t)\big)-\big(\ms(\t)\Phi(\t) \big)^{\top}\big( v-\bar{u}(\t)\big), \\
& &\qquad\qquad\qquad\qquad\qquad\qquad\qquad\qquad\qquad\quad\
\int_{\t}^{t}\Phi(s)^{-1}\sigma_{u}(s)(v-\bar{u}(s))dW(s)\Big\rangle dt\Big|\\
& & +\lim_{\theta\to 0^+}\Big|\frac{1}{\theta^{\frac{3}{2}}}\me\int_{\t}^{\t+\theta}
\Big\langle\big(\ms(\t)\Phi(\t) \big)^{\top}\big( v-\bar{u}(\t)\big), \int_{\t}^{t}
\Phi(s)^{-1}\sigma_{u}(s)
(v-\bar{u}(s))dW(s)\Big\rangle dt\Big|\\
&\le & \lim_{\theta\to 0^+}\frac{1}{\theta^{\frac{3}{2}}}
\Big[\me\int_{\t}^{\t+\theta}\Big|\big(\ms(t)\Phi(t) \big)^{\top}\big( v-\bar{u}(t)\big)-\big(\ms(\t)\Phi(\t) \big)^{\top}\big( v-\bar{u}(\t)\big)
\Big|^{2} dt \Big]^{\frac{1}{2}}\cdot\\
& &\qquad\qquad\qquad\qquad\qquad\qquad\qquad\qquad
\Big[\me\int_{\t}^{\t+\theta}\int_{\t}^{t}
\Big|\Phi(s)^{-1}\sigma_{u}(s)
(v-\bar{u}(s))\Big|^{2}ds dt \Big]^{\frac{1}{2}}\\
&=&0, \qquad a.e. \;\t\in[0,T).
\end{eqnarray*}
This indicates that, (3.22) is actually a higher order infinitesimal of $\theta^{\frac{3}{2}}$ (as $\theta\to 0^+$).

Essentially, the above problem is caused by the It\^{o} integral.
Indeed, one cannot use the Lebesgue differentiation theorem directly to treat the It\^{o} integral appeared in (\ref{3 per 2 order term}). In this subsection, we shall reduce the It\^{o}-Lebesgue integral term (\ref{3 per 2 order term}) to a double Lebesgue integral term by means of the property of It\^{o}'s integrals and the martingale representation theorem, and obtain a second-order necessary condition for singular optimal controls.

We need the following technical result (which should be known but we do not find an exact reference).

\begin{lemma}\label{martingale exp for L2 function}
Let $\varphi(\cdot)\in L^{2}_{\mmf}(\Omega;L^{2}(0,T;\mrn))$. Then, there exists a $\phi(\cdot,\cdot)\in L^{2}(0,T; $ $ L_{\mmf}^{2}([0,T]\times\Omega; \mrn))$ such that
\begin{equation}\label{martingale exp formulation}
\varphi(t)=\me\,\varphi(t)+\int_{0}^{t}\phi(s,t)dW(s), \quad a.s., \ a.e. \ t\in[0,T].
\end{equation}
\end{lemma}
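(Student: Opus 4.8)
The plan is to read the claimed identity as a martingale representation theorem applied to $\varphi(t)$ for each fixed $t$, the only genuine difficulty being the \emph{joint} measurability of the resulting family of It\^o integrands. First I would fix the isometry furnished by the martingale representation theorem: to every centered $\xi\in L^2_{\mf_T}(\Omega;\mrn)$ it associates a unique adapted integrand $\Lambda\xi\in L^2_{\mmf}([0,T]\times\Omega;\mrn)$ with $\xi=\me\,\xi+\int_0^T(\Lambda\xi)(s)\,dW(s)$ and, by the It\^o isometry, $\me\int_0^T|(\Lambda\xi)(s)|^2\,ds=\me\,|\xi-\me\,\xi|^2$, so that $\Lambda$ is a linear isometry onto $L^2_{\mmf}([0,T]\times\Omega;\mrn)$. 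Applying $\Lambda$ to $\varphi(t)-\me\,\varphi(t)$ for each fixed $t$ produces, pointwise in $t$, a candidate integrand $\phi(\cdot,t):=\Lambda\big(\varphi(t)-\me\,\varphi(t)\big)$ and the representation $\varphi(t)=\me\,\varphi(t)+\int_0^T\phi(s,t)\,dW(s)$. What this pointwise construction does \emph{not} provide is measurability of $(s,t,\omega)\mapsto\phi(s,t,\omega)$, and securing it is the heart of the lemma.

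To obtain joint measurability I would argue by density. Regard $t\mapsto\varphi(t)-\me\,\varphi(t)$ as a map into the Hilbert space $L^2_{\mf_T}(\Omega;\mrn)$; since $\varphi\in L^2_{\mmf}(\Omega;L^2(0,T;\mrn))$ one has $\int_0^T\me\,|\varphi(t)|^2\,dt<\infty$, and since $L^2_{\mf_T}(\Omega;\mrn)$ is separable this map is strongly measurable and square-integrable, hence the $L^2\big(0,T;L^2_{\mf_T}(\Omega;\mrn)\big)$-limit of step functions $\psi_n(t)=\sum_{i}\xi_i^n\,\chi_{A_i^n}(t)$ with $\xi_i^n\in L^2_{\mf_T}(\Omega;\mrn)$ centered and $A_i^n$ disjoint Borel subsets of $[0,T]$. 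For each such step function, $\phi_n(s,t):=\sum_i(\Lambda\xi_i^n)(s)\,\chi_{A_i^n}(t)=\Lambda\big(\psi_n(t)\big)(s)$ is patently $\mb([0,T])\otimes\mb([0,T])\otimes\mf$-measurable, and by the isometry
\[
\me\int_0^T\!\!\int_0^T|\phi_n(s,t)-\phi_m(s,t)|^2\,ds\,dt=\int_0^T\me\,|\psi_n(t)-\psi_m(t)|^2\,dt\longrightarrow 0 .
\]
Thus $\{\phi_n\}$ is Cauchy in $L^2([0,T]\times[0,T]\times\Omega;\mrn)$ and converges to a jointly measurable limit $\phi$; passing to a subsequence along which $\psi_{n}(t)\to\varphi(t)-\me\,\varphi(t)$ and $\phi_{n}(\cdot,t)\to\phi(\cdot,t)$ for a.e.\ $t$, continuity of $\Lambda$ identifies $\phi(\cdot,t)=\Lambda(\varphi(t)-\me\,\varphi(t))$, so $\phi$ realizes the representation for a.e.\ $t$. (The same conclusion follows abstractly from the ampliation $\Lambda\otimes\mathrm{id}_{L^2(0,T)}$, but the explicit approximation is what makes measurability transparent.)

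It then remains to verify the three structural requirements on $\phi$. The integrability bound is immediate from the It\^o isometry,
\[
\me\int_0^T\!\!\int_0^T|\phi(s,t)|^2\,ds\,dt=\int_0^T\me\,|\varphi(t)-\me\,\varphi(t)|^2\,dt\le\int_0^T\me\,|\varphi(t)|^2\,dt<\infty .
\]
Adaptedness of $s\mapsto\phi(s,t)$ for a.e.\ $t$ holds because each $\Lambda\xi_i^n$ is adapted, so each $\phi_n(\cdot,t)$ is adapted, and the adapted processes form a closed subspace of $L^2([0,T]\times\Omega;\mrn)$ stable under the (a.e.\ $t$) $L^2$-limit; hence $\phi\in L^2(0,T;L^2_{\mmf}([0,T]\times\Omega;\mrn))$. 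Finally I would use the adaptedness of $\varphi$ to lower the upper limit from $T$ to $t$: since $\varphi(t)$ is $\mf_t$-measurable, the martingale $r\mapsto\me(\varphi(t)\mid\mf_r)$ is constant for $r\ge t$, which forces $\phi(s,t)=\Lambda(\varphi(t)-\me\,\varphi(t))(s)=0$ for a.e.\ $s\in(t,T]$, whence $\int_0^T\phi(s,t)\,dW(s)=\int_0^t\phi(s,t)\,dW(s)$ and (\ref{martingale exp formulation}) follows. The main obstacle is the joint measurability established in the second paragraph; once the isometry and completeness deliver it, the integrability, adaptedness, and the reduction of the integration limit are routine.
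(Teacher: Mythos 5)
Your proposal is correct and follows essentially the same route as the paper: approximate $\varphi$ by step processes in $t$, represent each value via the martingale representation theorem, and use the It\^{o} isometry together with completeness of $L^{2}$ to obtain a jointly measurable limit $\phi$ realizing the representation. The only organizational difference is that the paper chooses the approximating simple processes adapted (each value $\xi_j^k$ being $\mf_{t_k}$-measurable, so the stochastic integral is supported on $[0,t_k]$ and the upper limit $t$ is automatic), whereas you allow general centered $\mf_{T}$-measurable values and recover $\phi(s,t)=0$ for a.e.\ $s\in(t,T]$ at the end from the $\mf_{t}$-measurability of $\varphi(t)$ --- both variants are valid and interchangeable.
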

\begin{proof}
Let $\{\varphi_{j}(\cdot)\}_{j=1}^{\infty}$
be a sequence in $L^{2}_{\mmf}(\Omega;L^{2}(0,T;\mrn))$ such that
$$\me~\int_{0}^{T}\big|\varphi_{j}(t)-\varphi(t)\big|^2dt\to 0,\qquad\hbox{as } j\to \infty,$$
where $\varphi_{j}(\cdot)=\sum_{k=0}^{K_{j}}\xi_{j}^{k}\chi_{[t_{k},t_{k+1})}(t)$, $K_{j}\in \mn$, $0=t_{0}< t_{1}<\cdots<t_{K_{j}+1}=T$ is a partition of $[0,T]$, and $\xi_{j}^{k}\in L^{2}_{\mf_{t_k}}(\Omega;\mrn)$.

For any fixed $j$ and $k$, since $\xi_{j}^{k}\in L^{2}_{\mf_{t_k}}(\Omega;\mrn)$, by the martingale representation theorem, there exists a stochastic process $\phi_{j}^{k}(\cdot)\in L^{2}_{\mmf}(\Omega;L^{2}(0,T;$ $\mrn))$ such that
$$\xi_{j}^{k}=\me~\xi_{j}^{k}+\int_{0}^{t_k}\phi_{j}^{k}(s)dW(s),\quad a.s.$$
Define
$$\phi_{j}(s,t)=\sum_{k=0}^{K_{j}}\phi_{j}^{k}(s)\chi_{[0,t_k]}(s)\chi_{[t_{k},t_{k+1})}(t),
\quad (s,t)\in [0,T]\times [0,T].$$
Clearly, $\varphi_{j}(\cdot)$ can be represented as
$$\varphi_{j}(t)=\me~\varphi_{j}(t)+\int_{0}^{t}\phi_{j}(s,t)dW(s),\quad a.s., \ a.e. \ t\in[0,T].$$

Consequently,  we have
\begin{eqnarray*}
& &\me~\int_{0}^{T}\int_{0}^{T}\Big|\phi_{j}(s,t)-\phi_{m}(s,t)\Big|^2dsdt
=
\me~\int_{0}^{T}\int_{0}^{t}\Big|\phi_{j}(s,t)-\phi_{m}(s,t)\Big|^2dsdt\\
&=&
\int_{0}^{T}\me~\Big|\int_{0}^{t}\Big[\phi_{j}(s,t)-\phi_{m}(s,t)\Big]dW(s)\Big|^2dt\\
&=&\int_{0}^{T}
\me~\Big|\varphi_{j}(t)-\varphi_{m}(t)- \me~\big[\varphi_{j}(t)-\varphi_{m}(t)\big]\Big|^2dt
\le
4\me~\int_{0}^{T}\Big|\varphi_{j}(t)-\varphi_{m}(t)\Big|^2dt.
\end{eqnarray*}
Since $\varphi_{j}(\cdot)$ converges strongly to $\varphi(\cdot)$, $\{\phi_{j}(\cdot,\cdot)\}_{j=1}^{\infty}$ is a Cauchy sequence in $L^{2}(0,T;$  $L_{\mmf}^{2}([0,T]\times\Omega; \mrn))$. Hence, there exists a $\phi(\cdot,\cdot)\in L^{2}(0,T;  L_{\mmf}^{2}([0,T]\times\Omega; \mrn))$
such that
$$\me~\int_{0}^{T}\int_{0}^{T}\Big|\phi_{j}(s,t)-\phi(s,t)\Big|^2dsdt
\to 0,\qquad\hbox{as } j\to \infty,$$
and
\begin{eqnarray*}
& &\me~\int_{0}^{T}\Big|\varphi(t)-\me~\varphi(t)-\int_{0}^{t}\phi(s,t)dW(s) \Big|^2dt\\
&= &
\me~\int_{0}^{T}\Big|\varphi(t)-\varphi_{j}(t)+\varphi_{j}(t)
-\me~\varphi(t)+\me~\varphi_{j}(t)-\me~\varphi_{j}(t)\\
& &-\int_{0}^{t}\phi(s,t)dW(s) +\int_{0}^{t}\phi_{j}(s,t)dW(s)
-\int_{0}^{t}\phi_{j}(s,t)dW(s)\Big|^2dt\\
&\le&
C\me~\int_{0}^{T}\Big|\varphi(t)-\varphi_{j}(t)\Big|^2dt
+C\int_{0}^{T}\Big|\me~\varphi(t)-\me~\varphi_{j}(t)\Big|^2dt\\
& &
+C\me~\int_{0}^{T}\Big|\int_{0}^{t}\phi(s,t)dW(s) -\int_{0}^{t}\phi_{j}(s,t)dW(s)
\Big|^2dt\\
& &
+C\me~\int_{0}^{T}\Big|\varphi_{j}(t)-\me~\varphi_{j}(t)-\int_{0}^{t}\phi_{j}(s,t)dW(s)    \Big|^2dt\\
&\le&
C\me~\int_{0}^{T}\Big|\varphi(t)-\varphi_{j}(t)\Big|^2dt
+C\me~\int_{0}^{T}\int_{0}^{T}\Big|\phi(s,t) -\phi_{j}(s,t)\Big|^2dsdt\\
& & \to 0,\qquad \hbox{as }j\to \infty.
\end{eqnarray*}
Therefore,  (\ref{martingale exp formulation}) holds.
\end{proof}

Also, we need the following simple result.

\begin{lemma}\label{bounded of S convex}
Let (C1)--(C2) hold. Then $\ms(\cdot)
\in L_{\mmf}^{4}(\Omega; L^{2}(0,T; \mr^{m\times n}))$.
\end{lemma}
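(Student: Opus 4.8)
The plan is to show $\ms(\cdot) \in L_{\mmf}^{4}(\Omega; L^{2}(0,T; \mr^{m\times n}))$ by inspecting the explicit definition (\ref{S})--(\ref{S(t)}) of $\ms(t)$ term by term and bounding its fourth moment via the regularity properties already established for the optimal trajectory and the two adjoint processes. Recall that
$$\ms(t)= H_{xu}(t)+b_{u}(t)^{\top}P_{2}(t)+\sigma_{u}(t)^{\top}Q_{2}(t)+\sigma_{u}(t)^{\top}P_{2}(t)\sigma_{x}(t),$$
where all coefficients are evaluated along $(\bar{x}(t),\bar{u}(t),P_{1}(t),Q_{1}(t),P_{2}(t),Q_{2}(t))$. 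Under (C2), the partial derivatives $b_{u},\sigma_{u},\sigma_{x}$ and the mixed second derivatives entering $H_{xu}=f_{xu}-\inner{P_1}{b_{xu}}-\inner{Q_1}{\sigma_{xu}}$ (schematically) are all uniformly bounded by the constant $L$, so adaptedness and measurability of $\ms(\cdot)$ are immediate, and the only quantities I need to control are the random factors $P_1,Q_1,P_2,Q_2$.

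First I would invoke the solvability statement recorded just after (\ref{secondajointequconvex}): for every $\beta\ge 1$, the first adjoint pair satisfies $(P_1,Q_1)\in L_{\mmf}^{\beta}(\Omega; C([0,T]; \mrn))\times L_{\mmf}^{\beta}(\Omega; L^{2}(0,T; \mrn))$ and the second pair satisfies $(P_2,Q_2)\in L_{\mmf}^{\beta}(\Omega; C([0,T]; \mathbf{S}^n))\times L_{\mmf}^{\beta}(\Omega; L^{2}(0,T; \mathbf{S}^n))$. Taking $\beta=4$ gives exactly what is needed. The terms not involving $Q_2$, namely $H_{xu}(t)$, $b_{u}(t)^{\top}P_{2}(t)$ and $\sigma_{u}(t)^{\top}P_{2}(t)\sigma_{x}(t)$, are bounded coefficients times $P_1(t),Q_1(t)$ or $P_2(t)$; since $P_1,P_2$ have continuous paths with finite fourth moment of their sup-norm, and $Q_1 \in L^2(0,T)$ a.s. with finite fourth moment, their $L^2(0,T)$-norms raised to the fourth power are integrable over $\Omega$, so each such term lies in $L_{\mmf}^{4}(\Omega; L^{2}(0,T))$.

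The one term deserving a separate word is $\sigma_{u}(t)^{\top}Q_{2}(t)$, because $Q_2$ is only an $L^2(0,T)$-process rather than a continuous one. Here I would estimate
$$\me\Big(\int_0^T |\sigma_u(t)^\top Q_2(t)|^2\,dt\Big)^{2}\le L^{4}\,\me\Big(\int_0^T |Q_2(t)|^2\,dt\Big)^{2}<+\infty,$$
which is finite precisely because $Q_2\in L_{\mmf}^{4}(\Omega; L^{2}(0,T; \mathbf{S}^n))$. Combining the four bounds through the triangle inequality in $L_{\mmf}^{4}(\Omega; L^{2}(0,T))$ yields the claim. I do not anticipate any real obstacle: the lemma is a bookkeeping consequence of the uniform bounds in (C2) together with the $L^\beta$-integrability of the adjoint processes already quoted, and the only point requiring mild care is matching the integrability exponent of $Q_2$ (which is available for every $\beta\ge1$, hence for $\beta=4$) against the desired $L^4(\Omega)$-target. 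Thus I would keep the proof short, citing the adjoint-equation estimates and (C2) and carrying out the four elementary Hölder/Cauchy–Schwarz bounds above.
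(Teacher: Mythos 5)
Your proposal is correct and follows essentially the same route as the paper: both arguments reduce the claim to the bound $\me\big[\int_{0}^{T}|\ms(t)|^{2}dt\big]^{2}<\infty$, using the uniform bounds on $b_{u},\sigma_{u},\sigma_{x},b_{xu},\sigma_{xu},f_{xu}$ from (C2) and the $L^{\beta}$-solvability (with $\beta=4$) of the two adjoint equations, so that everything comes down to the finiteness of $\|P_{1}\|_{\infty,4}$, $\|Q_{1}\|_{2,4}$, $\|P_{2}\|_{\infty,4}$ and $\|Q_{2}\|_{2,4}$ — exactly the quantities the paper invokes. (The sign in your schematic expression for $H_{xu}$ is flipped relative to the definition (\ref{Hamiltonianconvex}), but as you note this is immaterial for the norm estimates.)
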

\begin{proof}
We only need to prove that
$$ \me~\Big[\int_{0}^{T}\big|\ms(t)\big|^2dt\Big]^{2}<\infty.$$
By {\em (C1)--(C2)},
$$|f_{xu}(t)|\le C,\quad a.s.,\ a.e. \ t\in [0,T],$$
and, for $\varphi=b,\ \sigma$,
$$|\varphi_{x}(t)|+|\varphi_{u}(t)|+|\varphi_{xu}(t)|\le C,\quad a.s.,\ a.e. \ t\in [0,T].$$
Therefore,
\begin{eqnarray*}
& &\me~\Big[\int_{0}^{T}\big|\ms(t)\big|^2dt\Big]^{2}\\
&=& \me~\Big[\int_{0}^{T}\big| H_{xu}(t,\bar{x}(t),\bar{u}(t),P_{1}(t),Q_{1}(t))
+b_{u}(t,\bar{x}(t),\bar{u}(t))^{\top}P_{2}(t)\\
& &\quad\
+\sigma_{u}(t,\bar{x}(t),\bar{u}(t))^{\top}Q_{2}(t)
+\sigma_{u}(t,\bar{x}(t),\bar{u}(t))^{\top}
P_{2}(t)\sigma_{x}(t,\bar{x}(t),\bar{u}(t))\big|^2dt\Big]^{2}\\
&\le& C + C\me~\Big[\int_{0}^{T}\big(|P_{1}(t)|^2+|Q_{1}(t)|^2+|P_{2}(t)|^2
+|Q_{2}(t)|^2\big)dt\Big]^{2}\\
&\le& C + C\big(\|P_{1}\|_{\infty,4}^{4}+\|Q_{1}\|_{2,4}^{4}
+\|P_{2}\|_{\infty,4}^{4}+\|Q_{2}\|_{2,4}^{4}\big)\\
&<& \infty,
\end{eqnarray*}
which completes the proof of Lemma \ref{bounded of S convex}.
\end{proof}

By Lemma \ref{bounded of S convex}, $\ms(\cdot)
\in L_{\mmf}^{2}(\Omega; L^{2}(0,T; \mr^{m\times n}))$. Then, by our assumption (C1) and Lemma \ref{martingale exp for L2 function}, for any $v\in U$, there exists a $\phi_{v}(\cdot,\cdot)\in  L^{2}(0,T; L_{\mmf}^{2}([0,T]\times\Omega; \mrn))$ such that for a.e. $t\in[0,T]$,
\begin{equation}\label{martingale exp for convex S(v-u)}
\ms(t)^{\top}(v-\bar{u}(t))
=\me ~\Big[\ms(t)^{\top}(v-\bar{u}(t))\Big]
 +\int_{0}^{t} \phi_{v}(s,t)dW(s),\quad a.s.
\end{equation}

Using (\ref{martingale exp for convex S(v-u)}), we obtain the following second-order necessary condition, which is pointwise with respect to the time variable (but it is still in the integral form with respect to the sample point $\omega$).

\begin{theorem}\label{2orderconditionthconvex pointwise with t}
Let (C1)--(C2) hold. If $\bar{u}(\cdot)$ is a singular optimal control in the classical sense, then for any $v\in U$, it holds that
\begin{eqnarray}\label{2orderconditionconvex martingleexp}
& &\me~\inner{\ms(\t)b_{u}(\t)(v-\bar{u}(\t))}{v-\bar{u}(\t)}\nonumber\\
& &\quad + \partial^{+}_{\t}\big(\ms(\t)^{\top}(v-\bar{u}(\t)),
\sigma_{u}(\t)(v-\bar{u}(\t))\big) \le 0, \  a.e.\ \t\in [0,T],
\end{eqnarray}
where,
\begin{equation}\label{limsup martingleexp for convex}
\quad\quad\begin{array}{ll}
\displaystyle\partial^{+}_{\t}\big(\ms(\t)^{\top}(v-\bar{u}(\t)),
\sigma_{u}(\t)(v-\bar{u}(\t))\big)\\[3mm]\displaystyle
:=2\limsup_{\theta\to 0^+}\frac{1}{\theta^2}\me\int_{\t}^{\t+\theta}
\int_{\t}^{t}\Big\langle\phi_{v}(s,t),
\Phi(\t)\Phi(s)^{-1}\sigma_{u}(s)(v-\bar{u}(s))\Big\rangle dsdt,\quad\quad
\end{array}
\end{equation}
$\phi_{v}(\cdot,\cdot)$ is determined by (\ref{martingale exp for convex S(v-u)}), and
$\Phi(\cdot)$ is the solution to the stochastic differential equation (\ref{Phi}).
\end{theorem}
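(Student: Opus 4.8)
The plan is to insert the needle variation (\ref{needle variation}) into the integral-type condition (\ref{integraltype 2ordercondition}), divide by $\theta^{2}$, and let $\theta\to0^{+}$. Fix $v\in U$ and $E_{\theta}=[\t,\t+\theta)$ and abbreviate $\rho(t):=\ms(t)^{\top}(v-\bar u(t))$, which by Lemma \ref{bounded of S convex} lies in $L^{2}_{\mmf}(\Omega;L^{2}(0,T;\mrn))$ and hence admits the martingale representation (\ref{martingale exp for convex S(v-u)}). For the needle control, (\ref{integraltype 2ordercondition}) becomes
$$\frac{1}{\theta^{2}}\me\int_{\t}^{\t+\theta}\inner{y_{1}(t)}{\rho(t)}\,dt\le0,\qquad\mbox{for all small }\theta>0,$$
and, by (\ref{y1(t)for v(t)}), for $t\in E_{\theta}$ the process $y_{1}(t)$ splits as $y_{1}(t)=y_{1}^{A}(t)+y_{1}^{B}(t)$, where $y_{1}^{A}(t)=\Phi(t)\int_{\t}^{t}\Phi(s)^{-1}\big(b_{u}(s)-\sigma_{x}(s)\sigma_{u}(s)\big)(v-\bar u(s))\,ds$ is a Lebesgue part and $y_{1}^{B}(t)=\Phi(t)\int_{\t}^{t}\Phi(s)^{-1}\sigma_{u}(s)(v-\bar u(s))\,dW(s)$ is an It\^{o} part. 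I would treat the two contributions separately.

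For the Lebesgue part, $y_{1}^{A}(t)$ is of size $t-\t$ and, to leading order, equals $(t-\t)\big(b_{u}(\t)-\sigma_{x}(\t)\sigma_{u}(\t)\big)(v-\bar u(\t))$; the error of freezing the coefficients $\Phi(t)\Phi(s)^{-1}$, $b_{u}$, $\sigma_{x}$, $\sigma_{u}$ and $v-\bar u$ at time $\t$ is controlled by using that a.e. $\t$ is a Lebesgue point of these $L^{2}$-in-time data, so the induced remainder is $o(\theta^{2})$. Pairing with $\rho(t)$ and using $\int_{0}^{\t}\phi_{v}(s,t)\,dW(s)=\me[\rho(t)\,|\,\mf_{\t}]-\me\,\rho(t)$ to cancel the spurious product-of-means terms, I expect the Lebesgue part, after division by $\theta^{2}$, to converge to $\frac{1}{2}\me\inner{\ms(\t)\big(b_{u}(\t)-\sigma_{x}(\t)\sigma_{u}(\t)\big)(v-\bar u(\t))}{v-\bar u(\t)}$.

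The It\^{o} part is the crux. Writing $y_{1}^{B}(t)=\Phi(t)\xi(t)$ with $\xi(t)=\int_{\t}^{t}\Phi(s)^{-1}\sigma_{u}(s)(v-\bar u(s))\,dW(s)$, pairing with $\rho(t)$, and splitting $\Phi(t)=\Phi(\t)+(\Phi(t)-\Phi(\t))$ and $\int_{0}^{t}\phi_{v}\,dW=\int_{0}^{\t}\phi_{v}\,dW+\int_{\t}^{t}\phi_{v}\,dW$, I would evaluate each cross term by conditioning on $\mf_{\t}$ and applying the It\^{o} isometry on $[\t,t]$. The frozen factor $\Phi(\t)$ paired against $\int_{\t}^{t}\phi_{v}\,dW$ produces exactly $\me\int_{\t}^{t}\inner{\phi_{v}(s,t)}{\Phi(\t)\Phi(s)^{-1}\sigma_{u}(s)(v-\bar u(s))}\,ds$, which after division by $\theta^{2}$ and passage to the $\limsup$ yields $\frac{1}{2}\,\partial^{+}_{\t}(\cdots)$. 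The correction from the quadratic covariation $d\langle\Phi,\xi\rangle=\sigma_{x}\sigma_{u}(v-\bar u)\,dt$ contributes $\frac{1}{2}\me\inner{\ms(\t)\sigma_{x}(\t)\sigma_{u}(\t)(v-\bar u(\t))}{v-\bar u(\t)}$, the genuine product-of-means pieces cancel (again via $\int_{0}^{\t}\phi_{v}\,dW=\me[\rho(t)\,|\,\mf_{\t}]-\me\,\rho(t)$), and the $(\Phi(t)-\Phi(\t))\xi(t)$ contribution against $\int_{\t}^{t}\phi_{v}\,dW$ is negligible after division by $\theta^{2}$.

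Adding the two parts, the $\pm\frac{1}{2}\sigma_{x}\sigma_{u}$ terms cancel and I am left with $\frac{1}{2}\me\inner{\ms(\t)b_{u}(\t)(v-\bar u(\t))}{v-\bar u(\t)}+\frac{1}{2}\,\partial^{+}_{\t}(\cdots)\le0$; multiplying by $2$ gives (\ref{2orderconditionconvex martingleexp}). The hard part is precisely this It\^{o} analysis: one must show that the It\^{o}--Lebesgue term, which naively looks to be of order $\theta^{3/2}$, is actually $O(\theta^{2})$ and extract its exact coefficient. This forces the use of the martingale representation of $\rho$ together with careful bookkeeping of three effects---the diagonal It\^{o}-isometry term (the source of $\partial^{+}_{\t}$), the $d\langle\Phi,\xi\rangle$ correction (which kills the $\sigma_{x}\sigma_{u}$ left over from the Lebesgue part), and the $\mf_{\t}$-measurable remainders (which must cancel in pairs)---while keeping every freezing error $o(\theta^{2})$ via Lebesgue-point estimates for the merely $L^{2}$-in-time data $\ms(\cdot)$ and $v-\bar u(\cdot)$.
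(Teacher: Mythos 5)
Your proposal is correct and follows essentially the same route as the paper's proof in Subsection 4.1: the needle variation inserted into (\ref{integraltype 2ordercondition}), the explicit representation (\ref{y1(t)for v(t)}) of $y_{1}$ split into its Lebesgue and It\^{o} parts, the martingale representation (\ref{martingale exp for convex S(v-u)}) combined with the It\^{o} isometry and conditioning on $\mf_{\t}$ to produce the $\partial^{+}_{\t}$ term, the quadratic-covariation correction $\frac{1}{2}\me\inner{\ms(\t)\sigma_{x}(\t)\sigma_{u}(\t)(v-\bar{u}(\t))}{v-\bar{u}(\t)}$ cancelling the $-\sigma_{x}\sigma_{u}$ piece from the Lebesgue part, and Lebesgue-point estimates for the freezing errors. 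The only cosmetic difference is bookkeeping in the remainder term: the paper substitutes the integrated SDE (\ref{Phi}) for $\Phi(t)$ and freezes $\ms(t)^{\top}(v-\bar{u}(t))$ at $\t$ there, so it never pairs $\phi_{v}$ against the $(\Phi(t)-\Phi(\t))\xi(t)$ remainder, whose negligibility (which you assert) needs a slightly finer near-diagonal argument than plain Cauchy--Schwarz, though it does go through.
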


The proof of Theorem \ref{2orderconditionthconvex pointwise with t} will be given in Subsection 4.1.

\subsection{Second-order necessary condition in term of Malliavin derivative}

In Theorem \ref{2orderconditionthconvex pointwise with t} we obtain a second-order necessary condition in term of martingale representation. From the martingale representation theorem, we only know that, for any $v\in U$, $\phi_{v}(\cdot,\cdot)\in L^{2}(0,T; L_{\mmf}^{2}([0,T]\times\Omega; \mrn))$, and hence, for each $\tau\in [0,T]$,  the function
$$\varphi(s,t):=\me ~\inner{\phi_{v}(s,t)}{\Phi(\t)\Phi(s)^{-1}\sigma_{u}(s)(v-\bar{u}(s))},\quad (s,t)\in [0,T]\times[0,T]$$
is in $L^1([0,T]\times[0,T])$. However, the condition  $\varphi(\cdot,\cdot)\in L^{1}([0,T]\times[0,T])$ is not sufficient to ensure that, for a.e. $\t\in[0,T]$, the  limit
\begin{equation}\label{limitcontexample}
\lim_{\theta\to 0^+}\frac{1}{\theta^2}\int_{\t}^{\t+\theta}
\int_{\t}^{t}\varphi(s,t)dsdt
\end{equation}
exists.

\begin{example}\label{example inflim nonequ suplim}
Let $a_n=\frac{2}{3^n}$, $n=0, 1, 2, \cdots$. Then, $\sum_{n=1}^{\infty}a_{n}=1$ and $\sum_{k=n+1}^{\infty}a_{k}=\frac{a_n}{2}$. Let $T=\sqrt{2}$ and define $\varphi(\cdot,\cdot)\in L^{1}([0,\sqrt{2}]\times[0,\sqrt{2}])$ as follows:
$$\varphi(s,t)=\left\{
\begin{array}{l}
1, \ \ \ \ (s,t)\in  \big([0,\sqrt{2}]\times[0,\sqrt{2}]\big)\bigcap
\big\{\frac{a_{n}}{2}\le\frac{t-s}{\sqrt{2}}< a_n ,\ n=1,\ 2,\ \cdots\big\},\\
-1, \ \ (s,t)\in \big([0,\sqrt{2}]\times[0,\sqrt{2}]\big)\bigcap\big\{ a_n\le\frac{t-s}{\sqrt{2}}
< \frac{a_{n-1}}{2},\ n=1,\ 2,\ \cdots\big\},\\
0,  \ \ \ \ \mbox{otherwise}.
\end{array}\right.$$

Fixed a $\t \in[0,\sqrt{2})$ arbitrarily. If $\theta_{n}=\frac{\sqrt{2}a_{n-1}}{2}$, $\t+\theta_{n}\le \sqrt{2}$, then
$$\lim_{n\to \infty}\frac{1}{\theta_{n}^{2}}\int_{\t}^{\t+\theta_{n}}
\int_{\t}^{t}\varphi(s,t)dsdt
=\lim_{n\to \infty} \frac{\sum_{k=n}^{\infty}(\frac{\sqrt{2}a_{k}}{2})^{2}}
{(\frac{\sqrt{2}a_{n-1}}{2})^{2}}
=\lim_{n\to \infty} \frac{\frac{1}{4\cdot 9^{n-1}}}
{\frac{2}{9^{n-1}}}
=\frac{1}{8}.$$
On the other hand, if $\theta_{n}=\sqrt{2}a_{n}$, $\t+\theta_{n}\le \sqrt{2}$, then
\begin{eqnarray*}
\lim_{n\to \infty}\frac{1}{\theta_{n}^{2}}\int_{\t}^{\t+\theta_{n}}
\int_{\t}^{t}\varphi(s,t)dsdt
&=&\lim_{n\to \infty} \frac{\frac{1}{2}(\frac{\sqrt{2}a_{n}}{2})^{2}
+\sum_{k=n}^{\infty}(\frac{\sqrt{2}a_{k+1}}{2})^{2}}
{(\sqrt{2}a_{n})^{2}}\\
&=&\lim_{n\to \infty} \frac{\frac{5}{4\cdot 9^{n}}}
{\frac{8}{9^{n}}}
=\frac{5}{32}.
\end{eqnarray*}
\end{example}

\begin{example}
Let $T=1$. Define
$$\varphi(s,t)
=\left\{
\begin{array}{l}
       0, \qquad\qquad\quad\ \ \  t\le s,\ s,t\in [0,1],\\
       -\frac{1}{(t-s)^{\frac{1}{2}}}, \qquad \quad t> s,\ s,t\in [0,1].\\
\end{array}\right.$$
Obviously, $\varphi\in L^{1}([0,1]\times[0,1])$.
But, for any $\t\in[0,1)$ and $\theta>0$ satisfying $\t+\theta\le 1$,
$$\frac{\int^{\t+\theta}_{\t}\int^{t}_{\t}\varphi(s,t)dsdt}{\theta^2}
=\frac{-\int^{\t+\theta}_{\t}2(t-\t)^{\frac{1}{2}}dt}{\theta^2}
=\frac{-\frac{4}{3}\theta^{\frac{3}{2}}}{\theta^2}
\to -\infty,\quad (\theta \to 0^+).
$$
\end{example}

The above two examples show that, in general, the superior limit
\begin{eqnarray*}
\limsup_{\theta\to0^+}\frac{1}{\theta^2}\me\int_{\t}^{\t+\theta}
\int_{\t}^{t}\Big\langle\phi_{v}(s,t),
\Phi(\t)\Phi(s)^{-1}\sigma_{u}(s)(v-\bar{u}(s))\Big\rangle dsdt
\end{eqnarray*}
(in (\ref{limsup martingleexp for convex})) cannot be refined to be the limit,
and even worse, this superior limit
may be equal to $-\infty$. If the superior limit in (\ref{limsup martingleexp for convex}) is equal to $-\infty$ for a.e. $\t\in [0,T]$, the second-order necessary condition (\ref{2orderconditionconvex martingleexp}) turns out to be trivial.
On the other hand, even this superior limit is finite for a.e. $\t\in [0,T]$,
it is still difficult to obtain the continuity of the function
$$v\mapsto\partial^{+}_{\t}\big(\ms(\t)^{\top}(v-\bar{u}(\t)),
\sigma_{u}(\t)(v-\bar{u}(\t))\big).$$

All the problems  mentioned in the above are caused by the lack of further information for $\phi_{v}(\cdot,\cdot)$. If both $\ms(\cdot)$ and $\bar{u}(\cdot)$ are regular enough, the function $\phi_{v}(\cdot,\cdot)$ has an explicit representation and then we can improve the result obtained in Theorem
\ref{2orderconditionthconvex pointwise with t}. To this end, we assume that

\begin{enumerate}
\item [{\bf (C3)}]
$$\bar{u}(\cdot)\in\ml_{2,\mmf}^{1,2}(\mrm),\ \ms(\cdot)\in \ml_{2,\mmf}^{1,2}(\mr^{m\times n})\cap L^{\infty}([0,T]\times\Omega;\mr^{m\times n}).$$
\end{enumerate}

We have the following pointwise second-order necessary condition for singular optimal controls.

\begin{theorem}\label{2orderconditionth}
Let (C1)--(C3) hold. If $\bar{u}(\cdot)$ is a singular optimal control in the classical sense, then for a.e. $\t\in [0,T]$, it holds that
\begin{eqnarray}\label{2orderconditionconvex}
& &\inner{\ms(\t)b_{u}(\t)(v-\bar{u}(\t))}{v-\bar{u}(\t)}\nonumber\\
& &\ \ + \inner{\nabla \ms(\t)\sigma_{u}(\t)(v-\bar{u}(\t))}
{v-\bar{u}(\t)}\\
& &\ \ - \inner{\ms(\t)\sigma_{u}(\t)(v-\bar{u}(\t))}
{\nabla\bar{u}(\t)}
\le 0, \quad \forall \ v\in U, \ a.s.\nonumber
\end{eqnarray}
\end{theorem}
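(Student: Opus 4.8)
The plan is to start from Theorem \ref{2orderconditionthconvex pointwise with t} and, under (C3), turn the superior limit $\partial^{+}_{\t}$ into the explicit expression appearing in (\ref{2orderconditionconvex}). The key observation is that, since $\ms(t)^{\top}(v-\bar{u}(t))$ is $\mf_{t}$-measurable and lies in $\md^{1,2}(\mrn)$ by (C3) (a product of $\md^{1,2}$-elements, both bounded because $\ms\in L^{\infty}$ and $U$ is bounded), the Clark--Ocone formula (\ref{clark-ocone formula}) identifies the martingale integrand in (\ref{martingale exp for convex S(v-u)}) as $\phi_{v}(s,t)=\me\big(D_{s}[\ms(t)^{\top}(v-\bar{u}(t))]\,\big|\,\mf_{s}\big)$ for $a.e.\ s\le t$. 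In (\ref{limsup martingleexp for convex}) this is paired with $\Phi(\t)\Phi(s)^{-1}\sigma_{u}(s)(v-\bar{u}(s))$, which is $\mf_{s}$-measurable (as $\t\le s$ and $\Phi(\t)$ is $\mf_{\t}$-measurable); hence the tower property removes the conditional expectation, and the Leibniz rule gives $D_{s}[\ms(t)^{\top}(v-\bar{u}(t))]=(D_{s}\ms(t))^{\top}(v-\bar{u}(t))-\ms(t)^{\top}D_{s}\bar{u}(t)$ (for deterministic $v$ the term $\ms(t)^{\top}D_{s}v$ is absent).

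Next I would evaluate the integral over the triangle $\{\t\le s\le t\le\t+\theta\}$, whose area is $\theta^{2}/2$. Applying Lemma \ref{lemma for malliavin deriv} to the stacked process $(\ms,\bar{u})\in\ml_{2,\mmf}^{1,2}(\mr^{m\times n}\times\mrm)$ produces, for $a.e.\ \t$, one sequence $\theta_{n}\to0^{+}$ along which $D_{s}\ms(t)$ and $D_{s}\bar{u}(t)$ may be replaced (in the triangle-averaged $L^{2}$ sense) by $\nabla\ms(s)$ and $\nabla\bar{u}(s)$; combining this with the Lebesgue points of the $v$-free processes $\ms,\bar{u},\sigma_{u},\nabla\ms,\nabla\bar{u}$ and the continuity of $\Phi(\t)\Phi(\cdot)^{-1}$, and using the boundedness from (C1)--(C3) together with Hölder's inequality, the triangle average converges along $\theta_{n}$ to the constant
\begin{equation*}
E:=\me\Big[\inner{\nabla\ms(\t)\sigma_{u}(\t)(v-\bar{u}(\t))}{v-\bar{u}(\t)}-\inner{\ms(\t)\sigma_{u}(\t)(v-\bar{u}(\t))}{\nabla\bar{u}(\t)}\Big].
\end{equation*}
Since a superior limit dominates the limit along any subsequence and the factor $2$ in (\ref{limsup martingleexp for convex}) cancels the triangle factor $1/2$, this yields $\partial^{+}_{\t}\ge E$; in particular $\partial^{+}_{\t}>-\infty$, so the pathology of the examples above is excluded under (C3). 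Substituting this lower bound into (\ref{2orderconditionconvex martingleexp}) gives the expectation form $\me[G(\t,\cdot,v)]\le0$, where $G(\t,\cdot,v)$ denotes the left-hand side of (\ref{2orderconditionconvex}), for each deterministic $v\in U$ and $a.e.\ \t$.

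Finally I would upgrade this expectation inequality to the pointwise statement (\ref{2orderconditionconvex}). The device is to rerun the entire argument with the deterministic $v$ replaced by a bounded $\mf_{\t}$-measurable, $U$-valued random variable: such a needle is still admissible, and $D_{s}v=0$ for $s>\t$ since $v$ is $\mf_{\t}$-measurable, so neither the variational identity of Proposition \ref{variationalformuforconvex} nor the Clark--Ocone computation above is altered, and one again obtains $\me[G(\t,\cdot,v)]\le0$. Testing this with $v=v_{k}\chi_{A}+\bar{u}(\t)\chi_{\Omega\setminus A}$ for $A\in\mf_{\t}$ and $v_{k}$ from a countable dense subset $\{v_{k}\}\subset U$ gives $\me[G(\t,\cdot,v_{k})\chi_{A}]\le0$; choosing $A=\{G(\t,\cdot,v_{k})>0\}$ forces $G(\t,\cdot,v_{k})\le0$ $a.s.$, and the continuity of the quadratic map $v\mapsto G(\t,\cdot,v)$ then extends the inequality to all $v\in U$.

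The main obstacle is not analytic but measure-theoretic: I must secure a single $\t$-null set valid simultaneously for all these random needles. The fact to exploit is that, after the Clark--Ocone reduction, every ``$a.e.\ \t$'' restriction is a Lebesgue-differentiation or Lemma \ref{lemma for malliavin deriv} condition on the $v$-\emph{independent} processes $\ms,\bar{u},\sigma_{u},\nabla\ms,\nabla\bar{u},\Phi$, while the only inequality used, Proposition \ref{variationalformuforconvex}, carries no exceptional set at all. Intersecting these $v$-free null sets with the countably many coming from the dense family $\{v_{k}\}$ produces the required common exceptional set, after which the selection/contradiction step and the continuity in $v$ close the argument.
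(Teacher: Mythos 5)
Your overall architecture coincides with the paper's proof in Subsection 4.2: identify the kernel $\phi_{v}$ of (\ref{martingale exp for convex S(v-u)}) through the Clark--Ocone formula (\ref{clark-ocone formula}), remove the conditional expectation by adaptedness, apply the Leibniz rule, and use Lemma \ref{lemma for malliavin deriv} to replace $\dd_{s}\ms(t)$ and $\dd_{s}\bar{u}(t)$ by $\nabla\ms(s)$ and $\nabla\bar{u}(s)$ along a sequence $\theta_{n}\to0^{+}$, before localizing via countable dense families. Two of your deviations are sound and even slightly cleaner: starting from Theorem \ref{2orderconditionthconvex pointwise with t} and only bounding $\partial^{+}_{\tau}$ \emph{from below} along $\theta_{n}$ suffices, since the superior limit in (\ref{limsup martingleexp for convex}) dominates the limit along any subsequence (the paper instead reruns all the limits directly from (\ref{integraltype 2ordercondition})); and applying Lemma \ref{lemma for malliavin deriv} to the stacked process $(\ms,\bar{u})$ yields one common sequence, where the paper extracts a further subsequence to obtain (\ref{th31 equ7}).

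There is, however, a genuine gap in your localization step. The random needle $v=v_{k}\chi_{A}+\bar{u}(\tau)\chi_{\Omega\setminus A}$ is in general \emph{not} Malliavin differentiable: if $\chi_{A}\in\md^{1,2}(\mr)$ then $P(A)\in\{0,1\}$ (see \cite{Nualart06}, Proposition 1.2.6). Consequently your assertions that ``$\dd_{s}v=0$ for $s>\tau$'' and that ``the Clark--Ocone computation above is not altered'' are unjustified: the functional $\ms(t)^{\top}(v-\bar{u}(t))$ need not lie in $\md^{1,2}(\mrn)$, and the product rule you invoke breaks down at exactly the step that produces $\nabla\ms$ and $\nabla\bar{u}$. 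The repair is the paper's own device: take the needle equal to the \emph{deterministic} value $v^{k}$ on $E_{\theta}\times A_{ij}$ and $\bar{u}$ elsewhere, so the perturbation is $(v^{k}-\bar{u}(t))\chi_{A_{ij}}(\omega)\chi_{E_{\theta}}(t)$; since $A_{ij}\in\mf_{t_{i}}\subset\mf_{\tau}$, the indicator factors out of every It\^{o} integral over $[\tau,t]$ and is eliminated by conditioning on $\mf_{\tau}$ (the tower-property computation following (\ref{th31 equ1})), while Clark--Ocone is applied only to $\ms(t)^{\top}(v^{k}-\bar{u}(t))$ as in (\ref{expu(t)S(t)}). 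Relatedly, your claim that all a.e.-$\tau$ exceptional sets are ``$v$-free'' is not literally true once indicators enter: the differentiation conditions are then imposed on quantities carrying $\chi_{A}$ inside the expectation, so a priori the null set depends on $A$; since your selection $A=\{G(\tau,\cdot,v_{k})>0\}$ itself depends on $\tau$, this is circular unless the dependence is removed --- either by checking that every error estimate is an $L^{1}(\Omega)$ bound uniform over $|\chi_{A}|\le1$ (true, but it must be argued), or by the paper's route: run the argument over the countable family $\{(t_{i},A_{ij},v^{k})\}$ with $\{A_{ij}\}$ generating $\mf_{t_{i}}$ at rational $t_{i}$, take the single null set $E_{0}=\bigcup_{i,j,k}E^{k}_{i,j}$, and recover arbitrary events in $\mf_{\tau}$ afterwards via $P(A\Delta A_{l_{n}})\to0$ and the continuity of the filtration. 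With these two repairs your argument becomes, in substance, the paper's proof.
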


The proof of Theorem \ref{2orderconditionth} will be given in Subsection 4.2.

\begin{remark}
In some special cases, the regularity assumption on $\ms(\cdot)$ holds automatically. One of them is the linear quadratic optimal control problem with convex control constraints.
In this case, the functions $b$, $\sigma$, $f$ and $h$ in (\ref{controlsys})-(\ref{costfunction}) are given as follows:
\begin{eqnarray*}
& &b(t,x,u)=A(t)x+B(t)u,\quad \sigma(t,x,u)=C(t)x+D(t)u,\quad h(x)=\frac{1}{2}\inner{Gx}{x},\\
& &f(t,x,u)=\frac{1}{2}\big[\inner{R(t)x}{x}+2\inner{M(t)x}{u}+\inner{N(t)u}{u}\big],\quad (t,x,u)\in [0,T]\times\mrn\times\mrm,
\end{eqnarray*}
where $A(\cdot), C(\cdot)\in C([0,T]; \mr^{n\times n})$, $B(\cdot), D(\cdot)\in C([0,T]; \mr^{n\times m})$, $R(\cdot)\in C([0,T];$ $\mathbf{S}^n)$, $M(\cdot)\in C([0,T]; \mr^{m\times n})$ and $N(\cdot)\in C([0,T]; \mathbf{S}^m)$ are deterministic matrix-valued functions, and $G\in \mathbf{S}^n$ is a (deterministic) matrix.

Indeed, for this problem, the second-order adjoint equation is
\qquad\begin{equation}\label{second order adjoint for LQ}
\left\{
\begin{array}{l}
dP_{2}(t)=-\Big[A(t)^{\top}P_{2}(t)+P_{2}(t)A(t)
+C(t)^{\top}P_{2}(t)C(t)+C(t)^{\top}Q_{2}(t)\\
\qquad\qquad\qquad
+ Q_{2}(t)C(t)-R(t)\Big]dt+Q_{2}(t)dW(t),\ \ t\in[0,T],\\
P_{2}(T)=-G.
\end{array}\right.
\end{equation}
Since $A(\cdot)$, $B(\cdot)$, $C(\cdot)$,  $D(\cdot)$, $R(\cdot)$, $M(\cdot)$, $N(\cdot)$ and $G$ are deterministic, the equatio(\ref{second order adjoint for LQ}) admits a unique deterministic solution $(P_{2}(\cdot),0)$,  where $P_{2}(\cdot)$ is the solution to the following differential equation
\begin{equation}\label{second order adjoint for LQ ordinary}
\qquad\left\{
\begin{array}{l}
\dot{P}_{2}(t)=-A(t)^{\top}P_{2}(t)-P_{2}(t)A(t)-C(t)^{\top}P_{2}(t)C(t)
+R(t),\ t\in[0,T],\\
P_{2}(T)=-G.
\end{array}\right.
\end{equation}
Hence, for this case,
$$\ms(\cdot)=-M(\cdot)+B(\cdot)^{\top}P_{2}(\cdot)+D(\cdot)^{\top}P_{2}(\cdot)C(\cdot)$$
is a deterministic continuous matrix-valued function, hence it belongs to the space $\ml^{1,2}_{2,\mmf}(\mr^{m\times n})\cap L^{\infty}([0,T]\times\Omega;\mr^{m\times n})$.

In general, to obtain the regularity of $\ms(\cdot)$, we need the regularity of $(\bar{u}(\cdot),\bar{x}(\cdot))$, $(P_{1}(\cdot),Q_{1}(\cdot))$ and $(P_{2}(\cdot),Q_{2}(\cdot))$. From the regularity results for solutions to stochastic differential equations (see \cite{Peng97} and \cite{Nualart06}), the optimal control $\bar{u}(\cdot)$ needs to be regular enough.
In the deterministic setting, the regularity of optimal controls has been studied by many authors (see \cite{Cannarsa09,Clarke90} and references cited therein). However, to the best of our knowledge, there exists no reference addressing the regularity of stochastic optimal controls. We will discuss this topic in our forthcoming paper.
\end{remark}

To end this section, we give two examples to explain how to distinguish singular optimal controls from others by using the pointwise second-order necessary conditions established in Theorem \ref{2orderconditionth}.

\begin{example}
Let $n=m=1$, $T=1$, $U=[-1, 1]$. Consider the following one-dimensional control system
\begin{equation}\nonumber
\left\{
\begin{array}{l}
dx(t)=u(t)dt+u(t)dW(t),\qquad t\in[0,1],\\
x(0)=0
\end{array}\right.
\end{equation}
and the cost functional
$$J(u(\cdot))=\frac{1}{2}\me\int_{0}^{1} |u(t)|^2dt-\frac{1}{2}\me~|x(1)|^2.$$
For this optimal control problem, the Hamiltonian is given by
 $$H(t,x,u,y_1,z_1)=y_1u+z_1u-\frac{1}{2}u^2,\quad (t,x,u,y_1,z_1)\in [0,1]\times\mr\times U\times\mr\times\mr.$$
Let $(\bar{x}(t),\bar{u}(t))\equiv(0,0)$. The corresponding two adjoint equations are
\begin{equation}\nonumber
\left\{
\begin{array}{l}
dP_{1}(t)=Q_{1}(t)dW(t), \ t\in[0,1],\qquad\qquad \qquad\qquad \qquad\\
P_{1}(1)=0,
\end{array}\right.
\end{equation}
and
\begin{equation}\nonumber
\left\{
\begin{array}{l}
dP_{2}(t)=Q_{2}(t)dW(t), \ t\in[0,1],\qquad\qquad \qquad\qquad \qquad\\
P_{2}(1)=1.
\end{array}\right.
\end{equation}
Obviously,
$$(P_{1}(t),Q_{1}(t))\equiv(0,0),\ \qquad (P_{2}(t),Q_{2}(t))\equiv(1,0).$$
Then, we have for all $(t,\omega)\in [0,1]\times\Omega$,
$$H_{u}(t,\bar{x}(t),\bar{u}(t),P_{1}(t),Q_{1}(t))=0,$$
and
$$H_{uu}(t,\bar{x}(t),\bar{u}(t),P_{1}(t),Q_{1}(t))
+\sigma_{u}(t,\bar{x}(t),\bar{u}(t))^{\top}
P_{2}(t)\sigma_{u}(t,\bar{x}(t),\bar{u}(t))=0.$$
That is, $\bar{u}(t)\equiv 0$ is a singular control in the classical sense. Let $\hat{u}(t)\equiv-1$, we have
 $$-\frac{1}{2}=J(\hat{u}(\cdot))<J(\bar{u}(\cdot))=0.$$
Therefore, $\bar{u}(t)\equiv 0$ is not an optimal control.

Now, we show that $\bar{u}(t)\equiv 0$ does not satisfy the second-order necessary condition (\ref{2orderconditionconvex}).
Actually,
$$\ms(t)\equiv1,\quad \nabla \ms(t)\equiv0,\quad \nabla \bar{u}(t)\equiv0.$$
Let $v= 1$, we find that
\begin{eqnarray*}
& &\inner{\ms(\t)b_{u}(\t)(v-\bar{u}(\t))}{v-\bar{u}(\t)}\\
& &\ \ +\inner{\nabla \ms(\t)\sigma_{u}(\t)(v-\bar{u}(\t))}
{v-\bar{u}(\t)}\\
& &\ \ - \inner{\ms(\t)\sigma_{u}(\t)(v-\bar{u}(\t))}
{\nabla\bar{u}(\t)}\\
&=&1>0, \quad \forall\; (\t,\omega)\in [0,1]\times\Omega.
\end{eqnarray*}
Hence, the condition (\ref{2orderconditionconvex}) fails at $v=1$.
\end{example}

\begin{example}
Let $n=m=1$, $U=[-1,1]\times[-1,1]$. Consider the control system
\begin{equation}\nonumber
\left\{
\begin{array}{l}
dx(t)= Bu(t)dt+Du(t)dW(t),
\qquad \ \ \ t\in[0,T], \qquad\qquad \qquad\\
x(0)=0
\end{array}\right.
\end{equation}
with the following cost functional
$$J(u(\cdot))=\frac{1}{2}\me\inner{Gx(T)}{x(T)},$$
where
$$B=\left[ \begin{array}{cc}
          1 & 0 \\
          0 & 0 \\
\end{array} \right],\qquad
        D=\left[ \begin{array}{cc}
          0 & 0 \\
          0 & 1 \\
        \end{array} \right],\qquad
        G=\left[ \begin{array}{cc}
          1 & 0 \\
          0 & 0 \\
        \end{array} \right].$$

For this optimal control problem, the Hamiltonian is given by
$$H(t,x,u,y_{1}, z_{1})\!=\!\inner{y_1}{Bu}\!+\!\inner{z_1}{Du},\quad (t,x,u,y_1,z_1)\!\in\! [0,T]\!\times\!\mr^{2}\!\times\! U\!\times\!\mr^{2}\!\times\!\mr^{2}.$$
%for any $(t,x,u,y_1,z_1)\in [0,T]\times\mr^{2}\times U\times\mr^{2}\times\mr^{2}$.

Clearly, $(\bar{x}(t),\bar{u}(t))\equiv(0,0)$ is an optimal pair, and
the corresponding adjoint equations are respecitvely
\begin{equation}\nonumber
\left\{
\begin{array}{l}
dP_{1}(t)= Q_{1}(t)dW(t),
\qquad \ \ \ t\in[0,T], \qquad\qquad \qquad\\
P_{1}(T)=0
\end{array}\right.
\end{equation}
and
\begin{equation}\nonumber
\left\{
\begin{array}{l}
dP_{2}(t)= Q_{2}(t)dW(t),
\qquad \ \ \ t\in[0,T], \qquad\qquad \qquad\\
P_{2}(T)=-G.
\end{array}\right.
\end{equation}

Obviously,
$(P_{1}(t),Q_{1}(t))\equiv(0,0)$, $(P_{2}(t),Q_{2}(t))\equiv(-G,0)$,
and
$$H_{u}(t,\bar{x}(t),\bar{u}(t),P_{1}(t), Q_{1}(t))\equiv0,$$
$$H_{uu}(t,\bar{x}(t),\bar{u}(t),P_{1}(t), Q_{1}(t))+D^{\top}P_{2}(t)D\equiv0.$$
Therefore, $\bar{x}(t)\equiv0$ is a singular optimal control in the classical sense.

Since for this case,
$$\ms(t)\equiv-B^{\top}G,\quad \nabla \ms(t)\equiv 0,\quad \nabla \bar{u}(t)\equiv 0,$$
we have
\begin{eqnarray*}
& &\inner{\ms(\t)b_{u}(\t)(v-\bar{u}(\t))}{v-\bar{u}(\t)}\\
& &+\inner{\nabla \ms(\t)\sigma_{u}(\t)(v-\bar{u}(\t))}
{v-\bar{u}(\t)}\\
& & - \inner{\ms(\t)\sigma_{u}(\t)(v-\bar{u}(\t))}
{\nabla\bar{u}(\t)}\\
&=&-\inner{B^{\top}GBv}{v}\le 0, \quad \forall \; v\in U,\quad \forall\; (t,\omega)\in [0,T]\times\Omega.
\end{eqnarray*}
That is, the necessary condition (\ref{2orderconditionconvex}) holds.
\end{example}

\section{Proofs of the main results}

This section is devoted to proving Theorems \ref{2orderconditionthconvex pointwise with t} and \ref{2orderconditionth}. Firstly, we show a technical result.

\begin{lemma}\label{technical lemma}
Let $\Phi(\cdot),\ \Psi(\cdot)\in L_{\mmf}^{2}(\Omega;L^{2}(0,T;\mrn))$. Then, for a.e. $\t\in [0,T)$,
\begin{equation}\label{technical lemma limit1}
\lim_{\theta\to 0^+}\frac{1}{\theta^2}\me\int_{\t}^{\t+\theta}\Big\langle\Phi(\t),
\int_{\t}^{t} \Psi(s)ds\Big\rangle dt
=\frac{1}{2}\me\inner{\Phi(\t)}{\Psi(\t)},
\end{equation}
\begin{equation}\label{technical lemma limit2}
\lim_{\theta\to 0^+}\frac{1}{\theta^2}\me\int_{\t}^{\t+\theta}
\Big\langle\Phi(t), \int_{\t}^{t} \Psi(s)ds \Big\rangle dt
=\frac{1}{2}\me\inner{\Phi(\t)}{\Psi(\t)}.
\end{equation}
\end{lemma}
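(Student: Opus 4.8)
The plan is to recognize both identities as averaged Lebesgue differentiation statements and to reduce them to the classical Lebesgue differentiation theorem applied to suitable scalar integrable functions on $[0,T]$. First I would set up the right integrable functions: since $\Phi(\cdot),\Psi(\cdot)\in L_{\mmf}^{2}(\Omega;L^{2}(0,T;\mrn))$, the maps $t\mapsto \me|\Phi(t)|^2$ and $t\mapsto \me|\Psi(t)|^2$ are in $L^1(0,T)$, and by Cauchy--Schwarz the map $t\mapsto \me\inner{\Phi(t)}{\Psi(t)}$ is in $L^1(0,T)$ as well. I would fix a Lebesgue point $\t$ (a point of approximate continuity in the $L^1$-averaged sense) of the relevant functions; almost every $\t\in[0,T)$ is such a point, which gives the ``a.e.\ $\t$'' in the statement.

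For \eqref{technical lemma limit1}, the inner integrand does not depend on $t$ through $\Phi$, so I would interchange $\me$ with the (deterministic) double time-integral via Fubini and write
\begin{equation}\nonumber
\frac{1}{\theta^2}\me\int_{\t}^{\t+\theta}\Big\langle\Phi(\t),\int_{\t}^{t}\Psi(s)ds\Big\rangle dt
=\frac{1}{\theta^2}\int_{\t}^{\t+\theta}\int_{\t}^{t}g(s)\,ds\,dt,
\end{equation}
where $g(s):=\me\inner{\Phi(\t)}{\Psi(s)}$. The key observation is that the double integral $\int_{\t}^{\t+\theta}\int_{\t}^{t}g(s)\,ds\,dt = \int_{\t}^{\t+\theta}(\t+\theta-s)g(s)\,ds$ is a weighted average of $g$ over $[\t,\t+\theta]$ with a triangular weight of total mass $\theta^2/2$. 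Since $s\mapsto\me\inner{\Phi(\t)}{\Psi(s)}$ is integrable with a Lebesgue point at $s=\t$ (here $\Phi(\t)$ is a fixed $L^2$ random variable and $\Psi\in L^2$, so this pairing is an $L^1$ function of $s$ by Cauchy--Schwarz), this weighted average converges to $\frac12 g(\t)=\frac12\me\inner{\Phi(\t)}{\Psi(\t)}$ as $\theta\to0^+$, which is exactly the claimed limit.

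For \eqref{technical lemma limit2} the structure is the same except that $\Phi$ is now evaluated at the running variable $t$ rather than at $\t$. I would split $\inner{\Phi(t)}{\Psi(s)} = \inner{\Phi(\t)}{\Psi(s)} + \inner{\Phi(t)-\Phi(\t)}{\Psi(s)}$; the first piece gives the desired limit $\frac12\me\inner{\Phi(\t)}{\Psi(\t)}$ by \eqref{technical lemma limit1}, so it remains to show the remainder term is $o(1)$. By Cauchy--Schwarz in $(\omega,s)$ the remainder is controlled by
\begin{equation}\nonumber
\frac{1}{\theta^2}\int_{\t}^{\t+\theta}\Big(\me|\Phi(t)-\Phi(\t)|^2\Big)^{1/2}\Big(\int_{\t}^{t}\me|\Psi(s)|^2 ds\Big)^{1/2}(t-\t)^{1/2}dt,
\end{equation}
and the inner $s$-integral is $o(1)$ times $(t-\t)^{1/2}$ as $t\to\t^+$ while $\me|\Phi(t)-\Phi(\t)|^2$ is integrable in $t$; choosing $\t$ to be a common Lebesgue point of $t\mapsto\me|\Phi(t)|^2$ and of the cross term forces this to vanish. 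The main obstacle I anticipate is handling this remainder cleanly: one cannot assume pointwise continuity of $\Phi$, only $L^2$-integrability, so the argument must rely on the fact that at a Lebesgue point the averaged oscillation $\frac1\theta\int_{\t}^{\t+\theta}\me|\Phi(t)-\Phi(\t)|^2 dt\to0$, and then combine this with the extra $\theta^{1/2}$ gained from the inner integral to beat the $\theta^{-2}$ normalization. Making this estimate rigorous—rather than the clean separation-of-variables in part one—is where the real care is needed.
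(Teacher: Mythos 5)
Your proposal is correct and takes essentially the same route as the paper's own proof: the first identity via the Lebesgue differentiation theorem (your triangular-weight computation merely makes the averaging explicit), and the second via the decomposition $\Phi(t)=\Phi(\tau)+\big(\Phi(t)-\Phi(\tau)\big)$ followed by the very same Cauchy--Schwarz estimate, in which the factor $(t-\tau)^{1/2}$ from the inner integral combines with $\big[\int_{\tau}^{\tau+\theta}\me|\Phi(t)-\Phi(\tau)|^2dt\big]^{1/2}=o(\theta^{1/2})$ to beat the $\theta^{-2}$ normalization. The only point you (like the paper) leave implicit is that the Lebesgue points invoked belong to $\tau$-dependent families such as $s\mapsto\me\inner{\Phi(\tau)}{\Psi(s)}$ and $t\mapsto\me|\Phi(t)-\Phi(\tau)|^2$, which is justified by the Bochner-valued Lebesgue differentiation theorem applied to $s\mapsto\Psi(s)$ and $t\mapsto\Phi(t)$ as maps into the separable space $L^2(\Omega;\mrn)$.
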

\begin{proof}
The equality (\ref{technical lemma limit1}) is a corollary of the Lebesgue differentiation theorem. Now, we prove (\ref{technical lemma limit2}). For any $\t\in [0,T)$, let $\theta>0$ and $\t+\theta<T$. By the Lebesgue differentiation theorem, we have
$$
\lim_{\theta\to 0^+}\frac{1}{\theta}\int_{\t}^{\t+\theta}
\me~\big|\Phi(t)-\Phi(\t)\big|^2dt
= 0,\ \ \  a.e.\ \t\in [0,T),
$$
and
$$
\lim_{\theta\to 0^+}\frac{1}{\theta^{2}}\me\int_{\t}^{\t+\theta}
\int_{\t}^{t}\big|\Psi(s)\big|^2dsdt
= \frac{1}{2}\me\big|\Psi(\t)\big|^2,\ \ \  a.e.\ \t\in [0,T).
$$
Therefore,
\begin{eqnarray}\label{technical lemma equ4}
& &\lim_{\theta\to 0^+}\Big|\frac{1}{\theta^2}\me\int_{\t}^{\t+\theta}
\Big\langle \Phi(t)-\Phi(\t), \int_{\t}^{t} \Psi(s)ds\Big\rangle dt\Big|\\
&\le& \lim_{\theta\to 0^+}\frac{1}{\theta^2}
\Big[\int_{\t}^{\t+\theta}
\me~\big|\Phi(t)-\Phi(\t)\big|^2dt\Big]^{\frac{1}{2}}
\Big[\int_{\t}^{\t+\theta}
(t-\t)\me~\int_{\t}^{t}\big|\Psi(s) \big|^2ds dt\Big]^{\frac{1}{2}}
\nonumber\\
&\le&\lim_{\theta\to 0^+}\frac{1}{\theta^{\frac{3}{2}}}\Big[\int_{\t}^{\t+\theta}
\me~\big|\Phi(t)-\Phi(\t)\big|^2dt\Big]^{\frac{1}{2}}
\Big[\int_{\t}^{\t+\theta}
\me~\int_{\t}^{t}\big|\Psi(s) \big|^2ds dt\Big]^{\frac{1}{2}}
\nonumber\\
& =& 0,\ \qquad a.e.\ \t\in [0,T).\nonumber
\end{eqnarray}
Combining (\ref{technical lemma equ4})  and (\ref{technical lemma limit1}), we obtain (\ref{technical lemma limit2}).
This completes the proof of Lemma \ref{technical lemma}.
\end{proof}

\subsection{Proof of Theorem \ref{2orderconditionthconvex pointwise with t}}
%
%Since $[0,T)$ has full measure, we only need to prove the conclusion holds on $[0,T)$.
For any $v\in U$, $\t\in[0,T)$ and $\theta\in (0,T-\t)$, let $E_{\theta}=[\t, \t+\theta)$ and $u(\cdot)$ be defined by (\ref{needle variation}).
Then, $v(\cdot)=u(\cdot)-\bar{u}(\cdot)=(v-\bar{u}(\cdot))\chi_{E_{\theta}}(\cdot)$
and the corresponding solution $y_{1}(\cdot)$ to the equation (\ref{firstvariequconvex}) is given by
\begin{eqnarray}\label{y1(t) in2condtion convex mart}
\qquad y_{1}(t)&=&\Phi(t)\int_{0}^{t}\Phi(s)^{-1}
\big(b_{u}(s)-\sigma_{x}(s)\sigma_{u}(s)
\big)\big(v-\bar{u}(s)\big)\chi_{E_{\theta}}(s)ds\\
& & +\Phi(t)\int_{0}^{t}\Phi(s)^{-1}\sigma_{u}(s)(v-\bar{u}(s))
\chi_{E_{\theta}}(s)dW(s).\nonumber
\end{eqnarray}

Substituting $v(\cdot)=(v-\bar{u}(\cdot))\chi_{E_{\theta}}(\cdot)$ and (\ref{y1(t) in2condtion convex mart})
into (\ref{integraltype 2ordercondition}), we have
\begin{eqnarray}\label{limit of S(t)y1(t)}
\qquad 0&\ge&\frac{1}{\theta^2}\me\int_{\t}^{\t+\theta}
\inner{\ms(t)y_{1}(t)}{v-\bar{u}(t)}dt\nonumber\\
&=&\frac{1}{\theta^2}\me\int_{\t}^{\t+\theta}
\Big\langle \ms(t)\Phi(t)\int_{\t}^{t}\Phi(s)^{-1}
\big(b_{u}(s)-\sigma_{x}(s)\sigma_{u}(s)
\big)\cdot\nonumber\\
& &\qquad\qquad\qquad\qquad\qquad\qquad\qquad\qquad\
(v-\bar{u}(s))ds,
v-\bar{u}(t)\Big\rangle dt\\
& &+\frac{1}{\theta^2} \me\int_{\t}^{\t+\theta}
\Big\langle\ms(t)\Phi(t)\int_{\t}^{t}
\Phi(s)^{-1}\sigma_{u}(s)\cdot\nonumber\\
& &\qquad\qquad\qquad\qquad\qquad\qquad\qquad
(v-\bar{u}(s))dW(s),
v-\bar{u}(t)\Big\rangle dt.\nonumber
\end{eqnarray}

By Lemma \ref{technical lemma}, we have, for a.e. $\t\in [0,T)$,
\begin{eqnarray}\label{limit of S(t)y1(t)part1}
& &\lim_{\theta\to 0^+}\frac{1}{\theta^2}\me\int_{\t}^{\t+\theta}
\Big\langle \ms(t)\Phi(t)\int_{\t}^{t}\Phi(s)^{-1}
\big(b_{u}(s)-\sigma_{x}(s)\sigma_{u}(s)\big)\cdot\nonumber\\
& &\qquad\qquad\qquad\qquad\qquad\qquad\qquad\qquad\qquad
(v-\bar{u}(s))ds,
v-\bar{u}(t)\Big\rangle dt\nonumber\\
&=&\frac{1}{2}\me \inner{\ms(\t)\big(b_{u}(\t)-\sigma_{x}(\t)\sigma_{u}(\t)
\big)(v-\bar{u}(\t))}
{v-\bar{u}(\t)}.
\end{eqnarray}

On the other hand, by (\ref{Phi}), we have
\begin{eqnarray}\label{limit of S(t)y1(t)part2}
& &\frac{1}{\theta^2}\int_{\t}^{\t+\theta} \me~
\Big\langle \ms (t)\Phi(t)\int_{\t}^{t}
\Phi(s)^{-1}\sigma_{u}(s)(v-\bar{u}(s))dW(s),
v-\bar{u}(t) \Big\rangle dt\\
&=&\frac{1}{\theta^2}\int_{\t}^{\t+\theta}\me~
\Big\langle\ms (t)\Phi(\t)\int_{\t}^{t}
\Phi(s)^{-1}\sigma_{u}(s)(v-\bar{u}(s))dW(s),
v-\bar{u}(t) \Big\rangle dt\nonumber\\
& &+\frac{1}{\theta^2}\int_{\t}^{\t+\theta}\me~
\Big\langle\ms (t)\int_{\t}^{t}b_{x}(s)\Phi(s)ds\cdot\nonumber\\
& &\qquad\qquad\qquad\qquad\quad
\int_{\t}^{t}\Phi(s)^{-1}\sigma_{u}(s)(v-\bar{u}(s))dW(s),
v-\bar{u}(t)\Big\rangle dt\nonumber\\
& &+\frac{1}{\theta^2}\int_{\t}^{\t+\theta}\me~
\Big\langle\ms(t)\int_{\t}^{t}
\sigma_{x}(s)\Phi(s)dW(s)\cdot\nonumber\\
& &\qquad\qquad\qquad\qquad\
\int_{\t}^{t}\Phi(s)^{-1}\sigma_{u}(s)(v-\bar{u}(s)) dW(s),
v -\bar{u}(t)\Big\rangle dt.\nonumber
\end{eqnarray}

Substituting (\ref{martingale exp for convex S(v-u)}) into the fist term of the right hand of (\ref{limit of S(t)y1(t)part2}),  we get that
\begin{eqnarray}\label{th31 equ1 martingle}
\quad& &\limsup_{\theta\to 0^+}\frac{1}{\theta^2}\int_{\t}^{\t+\theta}\me~
\Big\langle\ms (t)\Phi(\t)\int_{\t}^{t}
\Phi(s)^{-1}\sigma_{u}(s)(v-\bar{u}(s))dW(s),
v-\bar{u}(t)\Big\rangle dt\\
&=&\limsup_{\theta\to 0^+}\frac{1}{\theta^2}\int_{\t}^{\t+\theta}
\me~\Big\langle\int_{\t}^{t}\Phi(\t)\Phi(s)^{-1}\sigma_{u}(s)
(v-\bar{u}(s))dW(s),\nonumber\\
& & \qquad\qquad\qquad\qquad\qquad\qquad\qquad\qquad\qquad\qquad\qquad
\me~\big[\ms(t)^{\top}(v-\bar{u}(t))\big]
\Big\rangle dt\nonumber\\
& & \qquad+\limsup_{\theta\to 0^+}\frac{1}{\theta^2}\int_{\t}^{\t+\theta}
\me~\Big\langle \int_{\t}^{t}\Phi(\t)\Phi(s)^{-1}\sigma_{u}(s)
(v-\bar{u}(s))dW(s),\nonumber\\
& & \qquad\qquad\qquad\qquad\quad\qquad\qquad\qquad\qquad\qquad\qquad\qquad
\int_{0}^{t} \phi_{v}(s,t) dW(s) \Big\rangle dt\nonumber\\
&=&\limsup_{\theta\to 0^+}\frac{1}{\theta^2}\int_{\t}^{\t+\theta}\int_{\t}^{t}
\me~\Big\langle\Phi(\t)\Phi(s)^{-1}\sigma_{u}(s)
(v-\bar{u}(s)),\phi_{v}(s,t) \Big\rangle dsdt\nonumber\\
&=&\frac{1}{2}\partial^{+}_{\t}\big(\ms(\t)^{\top}(v-\bar{u}(\t)),
\sigma_{u}(\t)(v-\bar{u}(\t))\big), \qquad \forall\ \t\in [0,T).\nonumber
\end{eqnarray}

Next, by Lemma \ref{bounded of S convex}, $\ms(\cdot)\in L^{4}(\Omega;L^{2}(0,T;\mr^{m\times n}))\subset L^{2}(\Omega;L^{2}(0,T;\mr^{m\times n}))$. Then, by Condition (C1), we have
\begin{eqnarray*}
& &\lim_{\theta\to 0^+}\Big|\frac{1}{\theta^2}\int_{\t}^{\t+\theta}\me~
\Big\langle\ms(t)\int_{\t}^{t}b_{x}(s)\Phi(s)ds\cdot\\
& &\qquad\qquad\qquad\qquad\qquad\qquad
 \int_{\t}^{t}\Phi(s)^{-1}\sigma_{u}(s)
(v-\bar{u}(s))dW(s),
v-\bar{u}(t)\Big\rangle dt\Big|\\
&\le&\lim_{\theta\to 0^+}\frac{C}{\theta^2}\int_{\t}^{\t+\theta}\me~
\Big|\ms(t)\int_{\t}^{t}b_{x}(s)\Phi(s)ds
\int_{\t}^{t}\Phi(s)^{-1}\sigma_{u}(s)
(v-\bar{u}(s))dW(s)\Big|dt\\
&\le&\lim_{\theta\to 0^+}\frac{C}{\theta^2}\int_{\t}^{\t+\theta}
\Big\{\Big[\me~\big|\ms(t)\big|^2\Big]^{\frac{1}{2}}\cdot
\Big[\me~\Big|\int_{\t}^{t}b_{x}(s)\Phi(s)ds\Big|^4\Big]^{\frac{1}{4}}\cdot\\
& &\qquad\qquad\qquad\quad\qquad\qquad
\Big[\me~\Big|\int_{\t}^{t}\big[\Phi(s)^{-1}\sigma_{u}(s)
(v-\bar{u}(s))\big]dW(s)\Big|^4\Big]^{\frac{1}{4}}\Big\}dt\\
&\le&\lim_{\theta\to 0^+}\frac{C}{\theta^2}\int_{\t}^{\t+\theta}
\Big\{\Big[\me~\big|\ms(t)\big|^2\Big]^{\frac{1}{2}}\cdot
\Big[\me~\Big|\int_{\t}^{t}b_{x}(s)\Phi(s)ds\Big|^4\Big]^{\frac{1}{4}}\cdot\\
& &\qquad\qquad\qquad\quad\qquad\qquad
\Big[\me~\Big(\int_{\t}^{t}\big|\Phi(s)^{-1}\sigma_{u}(s)
(v-\bar{u}(s))\big|^{2}ds\Big)^2\Big]^{\frac{1}{4}}\Big\}dt\\
&\le&\lim_{\theta\to 0^+}\frac{C}{\theta^2}\int_{\t}^{\t+\theta}
(t-\t)^{\frac{3}{2}}\Big[\me~\big|\ms(t)\big|^2\Big]^{\frac{1}{2}}dt\\
&=& 0,  \ \qquad a.e.\ \ \t\in[0,T).
\end{eqnarray*}
This implies that
\begin{eqnarray}\label{th31 equ2 martingle}
& &\lim_{\theta\to 0^+}\frac{1}{\theta^2}\int_{\t}^{\t+\theta}\me~
\Big\langle\ms(t)\int_{\t}^{t}b_{x}(s)\Phi(s)ds \cdot\\
& &\qquad\qquad\qquad\qquad
\int_{\t}^{t}\Phi(s)^{-1}\sigma_{u}(s)
(v-\bar{u}(s))dW(s),
v-\bar{u}(t)\Big\rangle dt\nonumber\\
&=& 0  \ \ \qquad a.e.\ \ \t\in[0,T).\nonumber
\end{eqnarray}

Furthermore, since
\begin{eqnarray*}
& &\lim_{\theta\to 0^+}\frac{1}{\theta^2}\int_{\t}^{\t+\theta}\me~
\Big\langle\ms(t)\int_{\t}^{t}\sigma_{x}(s)\Phi(s)dW(s)
\cdot\\
& &\qquad\qquad\qquad\qquad\qquad
\int_{\t}^{t}\Phi(s)^{-1}\sigma_{u}(s)(v-\bar{u}(s))dW(s),
v-\bar{u}(t)\Big\rangle dt\\
&=&\lim_{\theta\to 0^+}\frac{1}{\theta^2}\int_{\t}^{\t+\theta}\me~
\Big\langle \int_{\t}^{t}\sigma_{x}(s)\Phi(s)dW(s)
\int_{\t}^{t}\Phi(s)^{-1}\sigma_{u}(s)
(v-\bar{u}(s)) dW(s),\\
& &\qquad\qquad\qquad\qquad\qquad \qquad\qquad\ \
\ms(t)^{\top}\big(v-\bar{u}(t)\big)
-\ms(\t)^{\top}\big(v-\bar{u}(\t)\big)
\Big\rangle dt\\
& &\quad +\lim_{\theta\to 0^+}\frac{1}{\theta^2}\int_{\t}^{\t+\theta}\me~
\Big\langle\ms(\t)
\int_{\t}^{t}\sigma_{x}(s)\Phi(s)dW(s)\cdot\\
& &\qquad\qquad\qquad\qquad\qquad\
\int_{\t}^{t}\Phi(s)^{-1}\sigma_{u}(s)(v-\bar{u}(s))dW(s),
v-\bar{u}(\t)\Big\rangle dt,
\end{eqnarray*}
and
\begin{eqnarray*}
& &\lim_{\theta\to 0^+}\frac{1}{\theta^2}\Big|\int_{\t}^{\t+\theta}\me~
\Big\langle \int_{\t}^{t}\sigma_{x}(s)\Phi(s)dW(s)
\int_{\t}^{t}\Phi(s)^{-1}\sigma_{u}(s)
(v-\bar{u}(s)) dW(s),\\
& &\qquad\qquad\qquad\qquad\qquad \qquad\qquad\ \
\ms(t)^{\top}\big(v-\bar{u}(t)\big)
-\ms(\t)^{\top}\big(v-\bar{u}(\t)\big)
\Big\rangle dt\Big|\\
&\le& \lim_{\theta\to 0^+}\frac{1}{\theta^2}\int_{\t}^{\t+\theta}
\Big[\me~\Big|\int_{\t}^{t}\big|\sigma_{x}(s)\Phi(s)
\Big|^2ds\Big|^2\Big]^{\frac{1}{4}}\cdot\\
& &\qquad\qquad\qquad \qquad\quad\
\Big[\me~\Big|\int_{\t}^{t}\big|\Phi(s)^{-1}\sigma_{u}(s)
(v-\bar{u}(s))
\big|^2ds\Big|^2\Big]^{\frac{1}{4}}\cdot\\
& &\qquad\qquad\qquad \qquad\qquad \qquad\ \Big[\me~\big|\ms(t)^{\top}\big(v-\bar{u}(t)\big)
-\ms(\t)^{\top}\big(v-\bar{u}(\t)\big)\big|^2\Big]^{\frac{1}{2}}
dt\\
&\le&\lim_{\theta\to 0^+}\frac{C}{\theta^{\frac{1}{2}}}\Big[\int_{\t}^{\t+\theta}
\me~\big|\ms(t)^{\top}\big(v-\bar{u}(t)\big)
-\ms(\t)^{\top}\big(v-\bar{u}(\t)\big)
\big|^2dt\Big]^{\frac{1}{2}}\\
&=&0, \qquad\qquad  a.e.\ \ \t\in[0,T),
\end{eqnarray*}
then, by Lemma \ref{technical lemma}, we have,
\begin{eqnarray}\label{th31 equ3 martingle}
& &\lim_{\theta\to 0^+}\frac{1}{\theta^2}\int_{\t}^{\t+\theta}\me~
\Big\langle\ms(t)\int_{\t}^{t}\sigma_{x}(s)\Phi(s)dW(s)
\cdot\\
& &\qquad\qquad\qquad\qquad\qquad
\int_{\t}^{t}\Phi(s)^{-1}\sigma_{u}(s)(v-\bar{u}(s))dW(s),
v-\bar{u}(t)\Big\rangle dt\nonumber\\
&=&\lim_{\theta\to 0^+}\frac{1}{\theta^2}\int_{\t}^{\t+\theta}\me~
\Big\langle \ms(\t)
\int_{\t}^{t}\sigma_{x}(s)\sigma_{u}(s)(v-\bar{u}(s))ds,
v-\bar{u}(\t)\Big\rangle dt\nonumber\\
&=&\frac{1}{2}\me\inner{\ms(\t)\sigma_{x}(\t)
\sigma_{u}(\t)(v-\bar{u}(\t))}
{v-\bar{u}(\t)},  \ \ \ \ a.e.\ \ \t\in[0,T).\nonumber
\end{eqnarray}

Therefore, by (\ref{limit of S(t)y1(t)part2})--(\ref{th31 equ3 martingle}), we have, for a.e. $\t\in[0,T)$,
\begin{eqnarray}\label{limit of S(t)y1(t)part2final}
\ & &\limsup_{\theta\to 0^+}\frac{1}{\theta^{2}} \me\int_{\t}^{\t+\theta}
\Big\langle \ms(t)\Phi(t)\int_{\t}^{t}
\Phi(s)^{-1}\sigma_{u}(s)(v-\bar{u}(s))dW(s),
v-\bar{u}(t)\Big\rangle dt\\
&=&\frac{1}{2}\partial^{+}_{\t}\big(\ms(\t)^{\top}(v-\bar{u}(\t)),
\sigma_{u}(\t)(v-\bar{u}(\t))\big)\nonumber \\
& & +\frac{1}{2}\me\inner{\ms(\t)\sigma_{x}(\t)\sigma_{u}(\t)(v-\bar{u}(\t))}
{v-\bar{u}(\t)}.\nonumber
\end{eqnarray}

Finally, by (\ref{limit of S(t)y1(t)}), (\ref{limit of S(t)y1(t)part1}) and (\ref{limit of S(t)y1(t)part2final}), we obtain that
\begin{eqnarray*}
0&\ge&\limsup_{\theta\to 0^+}\frac{1}{\theta^{2}}\me\int_{\t}^{\t+\theta}
\inner{\ms(t)y_{1}(t)}{v-\bar{u}(t)}dt\\
&=&\frac{1}{2}\me \inner{\ms(\t)\big(b_{u}(\t)-\sigma_{x}(\t)\sigma_{u}(\t)
\big)(v-\bar{u}(\t))}
{v-\bar{u}(\t)}\\
& &+\frac{1}{2}\partial^{+}_{\t}\big(\ms(\t)^{\top}(v-\bar{u}(\t)),
\sigma_{u}(\t)(v-\bar{u}(\t))\big)\\
& &+\frac{1}{2}\me\inner{\ms(\t)\sigma_{x}(\t)
\sigma_{u}(\t)(v-\bar{u}(\t))}
{v-\bar{u}(\t)}\\
&=&\frac{1}{2}\me \inner{\ms(\t)b_{u}(\t)(v-\bar{u}(\t))}
{v-\bar{u}(\t)}\\
& &+\frac{1}{2}\partial^{+}_{\t}\big(\ms(\t)^{\top}(v-\bar{u}(\t)),
\sigma_{u}(\t)(v-\bar{u}(\t))\big),\quad a.e.\ \t\in[0,T),
\end{eqnarray*}
which gives (\ref{2orderconditionconvex martingleexp}). This completes the proof of Theorem \ref{2orderconditionthconvex pointwise with t}.

\subsection{Proof of Theorem \ref{2orderconditionth}}
Since $W(\cdot)$ is a continuous stochastic process,  $\mf_{t}$ is countably generated for any $t\in[0,T]$. Hence, one can fined a sequence  $\{A_{l}\}_{l=1}^{\infty}\subset \mf_{t}$ such that for any $A\in \mf_{t}$, there exists a subsequence $\{A_{l_{n}}\}_{n=1}^{\infty}\subset \{A_{l}\}_{l=1}^{\infty}$ such that
$\lim_{n\to \infty} P(A \Delta A_{l_{n}})=0$,
where $A \Delta A_{l_{n}}=(A\setminus A_{l_{n}})\bigcup (A_{l_{n}}\setminus A)$. $\mf_{t}$ is also said to be generated by the sequence $\{A_{l}\}_{l=1}^{\infty}$.

Denote by $\{t_{i}\}_{i=1}^{\infty}$ the sequence of rational numbers in $[0,T)$, by $\{v^{k}\}_{k=1}^{\infty}$ a dense subset of $U$. As in \cite{Haussmann76,Tang10}, we choose $\{A_{ij}\}_{j=1}^{\infty}(\subset \mf_{t_i})$ to be a sequence generating $\mf_{t_i}$ (for each $i\in \mn$).
Fix $i,j,k\in \mn$ arbitrarily. For any $\t\in [t_{i},T)$ and $\theta\in(0,T-\t)$, write $E_{\theta}^{i}=[\t,\t+\theta)$,
and define
$$
u_{ij}^{k}(t,\omega)=\left\{
\begin{array}{l}
v^{k}, \qquad\qquad\quad (t,\omega)\in E_{\theta}^{i}\times A_{ij},\\
\bar{u}(t,\omega), \qquad \quad\, (t,\omega)\in \big([0,T]\times \Omega\big) \setminus  \big(E_{\theta}^{i}\times A_{ij}\big).\\
\end{array}\right.
$$
Clearly, $u_{ij}^{k}(\cdot)\in \mmu_{ad}$. Choosing a ``test" function $v(\cdot)$ in (\ref{integraltype 2ordercondition}) as
$$v_{ij}^{k}(t,\omega)=u_{ij}^{k}(t,\omega)-\bar{u}(t,\omega)
=\big(v^{k}-\bar{u}(t,\omega)\big)\chi_{A_{ij}}(\omega)\chi_{E_{\theta}^{i}}(t),\quad (t,\omega)\in [0,T]\times \Omega,$$
we obtain that
\begin{equation}\label{integraltype with  perturbation}
\me\int_{\t}^{\t+\theta}
\inner{\ms(t)y_{ij}^{k}(t)}{v^{k}-\bar{u}(t)}\chi_{A_{ij}}(\omega)dt\le 0,
\end{equation}
where $y_{ij}^{k}(\cdot)$ is the solution to the variational equation (\ref{firstvariequconvex}) with $v(\cdot)$ replaced by $v_{ij}^{k}(\cdot)$.
By (\ref{y1(t)for v(t)}),
\begin{eqnarray}\label{y1(ijk)(t)}
\qquad y_{ij}^{k}(t)&=&\Phi(t)\int_{0}^{t}\Phi(s)^{-1}
\big(b_{u}(s)-\sigma_{x}(s)\sigma_{u}(s)
\big)\big(v^{k}-\bar{u}(s)\big)\chi_{E_{\theta}^{i}}(s)\chi_{A_{ij}}(\omega)ds\\
& & +\Phi(t)\int_{0}^{t}\Phi(s)^{-1}\sigma_{u}(s)\big(v^{k}-\bar{u}(s)\big)
\chi_{E_{\theta}^{i}}(s)\chi_{A_{ij}}(\omega)dW(s).\nonumber
\end{eqnarray}

Substituting (\ref{y1(ijk)(t)}) into (\ref{integraltype with  perturbation}), we have
\begin{eqnarray}\label{limit of S(t)y1(ijk)(t)}
\qquad\qquad 0&\ge&\frac{1}{\theta^2}\me\int_{\t}^{\t+\theta}
\Big\langle \ms(t)\Phi(t)\int_{\t}^{t}\Big[\Phi(s)^{-1}
\big(b_{u}(s)-\sigma_{x}(s)\sigma_{u}(s)
\big)\cdot\\
& &\qquad\qquad\qquad\qquad\qquad\qquad
(v^{k}-\bar{u}(s))\chi_{A_{ij}}(\omega)\Big]ds,
v^{k}-\bar{u}(t)\Big\rangle\chi_{A_{ij}}(\omega)dt\nonumber\\
& &+\frac{1}{\theta^2} \me\int_{\t}^{\t+\theta}
\Big\langle\ms(t)\Phi(t)\int_{\t}^{t}
\Big[\Phi(s)^{-1}\sigma_{u}(s)\cdot\nonumber\\
& &\qquad\qquad\qquad\qquad\qquad
(v^{k}-\bar{u}(s))\chi_{A_{ij}}(\omega)\Big]dW(s),
v^{k}-\bar{u}(t)\Big\rangle\chi_{A_{ij}}(\omega)dt.\nonumber
\end{eqnarray}

By Lemma \ref{technical lemma}, it is immediate that for a.e. $\t\in [t_{i},T)$,
\begin{eqnarray}\label{limit of S(t)y1(ijk)(t)part1}
\qquad& &\lim_{\theta\to 0^+}\frac{1}{\theta^2}\me\int_{\t}^{\t+\theta}
\Big\langle \ms(t)\Phi(t)\int_{\t}^{t}\Big[\Phi(s)^{-1}
\big(b_{u}(s)-\sigma_{x}(s)\sigma_{u}(s)\big)\cdot\nonumber\\
& &\qquad\qquad\qquad\qquad\qquad\qquad
(v^{k}-\bar{u}(s))\chi_{A_{ij}}(\omega)\Big]ds,
v^{k}-\bar{u}(t)\Big\rangle\chi_{A_{ij}}(\omega)dt\\
&=& \frac{1}{2}\me~\Big[\inner{\ms(\t)\big(b_{u}(\t)-\sigma_{x}(\t)\sigma_{u}(\t)
\big)(v^{k}-\bar{u}(\t))}
{v^{k}-\bar{u}(\t)}\chi_{A_{ij}}(\omega)\Big].\nonumber
\end{eqnarray}

Next, we prove that there exists a sequence $\{\theta_{n}\}_{n=1}^{\infty}$ such that $\theta_n\to0^+$ as $n\to\infty$ and
\begin{eqnarray}\label{limit of S(t)y1(ijk)(t)part2final}
\qquad& &\lim_{n\to \infty}\frac{1}{\theta_{n}^{2}} \me\int_{\t}^{\t+\theta_{n}}
\Big\langle \ms(t)\Phi(t)\int_{\t}^{t}
\Big[\Phi(s)^{-1}\sigma_{u}(s)\cdot\\
& &\qquad\qquad\qquad\qquad\quad
(v^{k}-\bar{u}(s))\chi_{A_{ij}}(\omega)\Big]dW(s),
v^{k}-\bar{u}(t)\Big\rangle\chi_{A_{ij}}(\omega)dt\nonumber\\
&=& \frac{1}{2} \me~\Big[\inner{\nabla \ms(\t)\sigma_{u}(\t)
(v^{k}-\bar{u}(\t))}
{v^{k}-\bar{u}(\t)}\chi_{A_{ij}}(\omega)\Big]\nonumber\\
& & -\frac{1}{2} \me~\Big[\inner{\ms(\t)\sigma_{u}(\t)
(v^{k}-\bar{u}(\t))}
{ \nabla\bar{u}(\t)}\chi_{A_{ij}}(\omega)\Big]\nonumber\\
& & +\frac{1}{2}\me~\Big[\inner{\ms(\t)\sigma_{x}(\t)\sigma_{u}(\t)(v^{k}-\bar{u}(\t))}
{v^{k}-\bar{u}(\t)}\chi_{A_{ij}}(\omega)\Big],\  \ a.e. \ \t\in [t_{i},T).\nonumber
\end{eqnarray}

By (\ref{Phi}),
\begin{eqnarray}\label{limit of S(t)y1(ijk)(t)part2}
\qquad& &\frac{1}{\theta^2}\int_{\t}^{\t+\theta} \me~\Big\{
\Big\langle\ms(t)\Phi(t)\int_{\t}^{t}
\Big[\Phi(s)^{-1}\sigma_{u}(s)\cdot\\
& &\qquad\qquad\qquad\qquad\
(v^{k}-\bar{u}(s))\chi_{A_{ij}}(\omega)\Big]dW(s),
v^{k}-\bar{u}(t) \Big\rangle\chi_{A_{ij}}(\omega)\Big\}dt\nonumber\\
&=&\frac{1}{\theta^2}\int_{\t}^{\t+\theta}\me~\Big\{
\Big\langle\ms(t)\Phi(\t)\int_{\t}^{t}
\Big[\Phi(s)^{-1}\sigma_{u}(s)\cdot\nonumber\\
& &\qquad\qquad\qquad\qquad\
(v^{k}-\bar{u}(s))\chi_{A_{ij}}(\omega)\Big]dW(s),
v^{k}-\bar{u}(t) \Big\rangle\chi_{A_{ij}}(\omega)\Big\}dt\nonumber\\
& &+\frac{1}{\theta^2}\int_{\t}^{\t+\theta}\me~\Big\{
\Big\langle\ms(t)\int_{\t}^{t}b_{x}(s)\Phi(s)ds
\int_{\t}^{t}\Big[\Phi(s)^{-1}\sigma_{u}(s)\cdot\nonumber\\
& &\qquad\qquad\qquad\qquad\
(v^{k}-\bar{u}(s))\chi_{A_{ij}}(\omega)\Big]dW(s),
v^{k}-\bar{u}(t)\Big\rangle\chi_{A_{ij}}(\omega)\Big\}dt\nonumber\\
& &+\frac{1}{\theta^2}\int_{\t}^{\t+\theta}\me~\Big\{
\Big\langle\ms(t)\int_{\t}^{t}\sigma_{x}(s)\Phi(s)dW(s)
\int_{\t}^{t}\Big[\Phi(s)^{-1}\sigma_{u}(s)\cdot\nonumber\\
& &\qquad\qquad\qquad\qquad\
(v^{k}-\bar{u}(s))\chi_{A_{ij}}(\omega)\Big]dW(s),
v^{k}-\bar{u}(t)\Big\rangle\chi_{A_{ij}}(\omega)\Big\}dt.\nonumber
\end{eqnarray}
Therefore, we can divide the computation for the left hand side of (\ref{limit of S(t)y1(ijk)(t)part2final}) into three parts.

Similar to respectively (\ref{th31 equ2 martingle}) and (\ref{th31 equ3 martingle}) (in the proof of Theorem \ref{2orderconditionthconvex pointwise with t}), we get that
\begin{eqnarray}\label{limit of S(t)y1(ijk)(t)equ4}
& &\lim_{\theta\to 0^+}\frac{1}{\theta^2}\int_{\t}^{\t+\theta}\me~\Big\{
\Big\langle\ms(t)\int_{\t}^{t}b_{x}(s)\Phi(s)ds \int_{\t}^{t}\Big[\Phi(s)^{-1}\sigma_{u}(s)\cdot\\
& &\qquad\qquad\qquad\qquad\qquad
(v^{k}-\bar{u}(s))\chi_{A_{ij}}(\omega)\Big]dW(s),
v^{k}-\bar{u}(t)\Big\rangle\chi_{A_{ij}}(\omega)\Big\}dt\nonumber\\
&=& 0,  \ \ \ \ a.e.\ \ \t\in[t_{i},T),\nonumber
\end{eqnarray}
and
\begin{eqnarray}\label{limit of S(t)y1(ijk)(t)equ5}
& &\lim_{\theta\to 0^+}\frac{1}{\theta^2}\int_{\t}^{\t+\theta}\me~\Big\{
\Big\langle\ms(t)\int_{\t}^{t}\sigma_{x}(s)\Phi(s)dW(s)
\int_{\t}^{t}\Big[\Phi(s)^{-1}\sigma_{u}(s)\cdot\\
& &\qquad\qquad\qquad\qquad\qquad
(v^{k}-\bar{u}(s))\chi_{A_{ij}}(\omega)\Big]dW(s),
v^{k}-\bar{u}(t)\Big\rangle\chi_{A_{ij}}(\omega)\Big\}dt\nonumber\\
&=&\frac{1}{2}\me~\Big[\inner{\ms(\t)\sigma_{x}(\t)
\sigma_{u}(\t)(v^{k}-\bar{u}(\t)}
{v^{k}-\bar{u}(\t)}\chi_{A_{ij}}(\omega)\Big], \ \  a.e.\ \t\in[t_{i},T).\nonumber
\end{eqnarray}

It remains to  prove that there exists a sequence $\{\theta_{n}\}_{n=1}^{\infty}$ such that $\theta_n\to0^+$ as $n\to\infty$ and
\begin{eqnarray}\label{limit of S(t)y1(ijk)(t)equ3}
& &\lim_{n\to\infty}\frac{1}{\theta_n^2}\int_{\t}^{\t+\theta_n}\me~\Big\{
\Big\langle\ms(t)\Phi(\t)\int_{\t}^{t}
\Big[\Phi(s)^{-1}\sigma_{u}(s)\cdot\\
& &\qquad\qquad\qquad\qquad\
(v^{k}-\bar{u}(s))\chi_{A_{ij}}(\omega)\Big]dW(s),
v^{k}-\bar{u}(t) \Big\rangle\chi_{A_{ij}}(\omega)\Big\}dt\nonumber\\
&=& \frac{1}{2} \me~\Big[\inner{\nabla\ms(\t)\sigma_{u}(\t)
(v^{k}-\bar{u}(\t))}{v^{k}-\bar{u}(\t)}\chi_{A_{ij}}(\omega)\Big]\nonumber\\
& &\qquad\ -\frac{1}{2} \me~\Big[\inner{\ms(\t)\sigma_{u}(\t)
(v^{k}-\bar{u}(\t))}
{\nabla\bar{u}(\t)}\chi_{A_{ij}}(\omega)\Big],\  \  a.e.\ \t\in[t_{i},T).\nonumber
\end{eqnarray}

By the boundness of $U$ and the regularity assumption {\em (C3)}, it holds that
$$\ms(\cdot)^{\top}(v^{k}-\bar{u}(\cdot))\in\ml^{1,2}_{\mmf}(\mrn)\cap L^{\infty}([0,T]\times\Omega;\mr^{n}),$$
Then, by the Clark-Ocone formula, for a.e. $t\in [0,T]$,
\begin{eqnarray}\label{expu(t)S(t)}
\ms(t)^{\top}(v^{k}-\bar{u}(t))
&=&\me~\Big[\ms(t)^{\top}(v^{k}-\bar{u}(t))\Big]\nonumber\\
& &+\int_{0}^{t}\me~\Big[\dd_{s}
\big(\ms(t)^{\top}(v^{k}-\bar{u}(t))\big)\ \Big|\ \mf_{s}\Big]dW(s).
\end{eqnarray}
Substituting (\ref{expu(t)S(t)}) into the first term of the right hand of (\ref{limit of S(t)y1(ijk)(t)part2}), we obtain that
\begin{eqnarray}\label{th31 equ1}
& &\quad\frac{1}{\theta^2}\int_{\t}^{\t+\theta}\me~\Big\{
\Big\langle\ms(t)\Phi(\t)\int_{\t}^{t}
\Big[\Phi(s)^{-1}\sigma_{u}(s)\\
& & \qquad\qquad\qquad\qquad\qquad\
(v^{k}-\bar{u}(s))\chi_{A_{ij}}(\omega)\Big]dW(s),
v^{k}-\bar{u}(t)\Big\rangle\chi_{A_{ij}}(\omega)\Big\}dt\nonumber\\
&=&\frac{1}{\theta^2}\int_{\t}^{\t+\theta}
\me~\Big\{\Big\langle\int_{\t}^{t}\Phi(\t)\Phi(s)^{-1}\sigma_{u}(s)
(v^{k}-\bar{u}(s))\chi_{A_{ij}}(\omega)dW(s),\nonumber\\
& & \qquad\qquad\qquad\qquad\qquad\qquad\qquad\qquad\qquad
\me~\big[\ms(t)^{\top}(v^{k}-\bar{u}(t))\big]
\Big\rangle\chi_{A_{ij}}(\omega)\Big\}dt\nonumber\\
& & \qquad+\frac{1}{\theta^2}\int_{\t}^{\t+\theta}
\me~\Big\{\Big\langle \int_{\t}^{t}\Phi(\t)\Phi(s)^{-1}\sigma_{u}(s)
(v^{k}-\bar{u}(s))\chi_{A_{ij}}(\omega)dW(s),\nonumber\\
& & \qquad\qquad\qquad\qquad\quad\ \
\int_{0}^{t}\me~\Big[\dd_{s}
\big(\ms(t)^{\top}(v^{k}-\bar{u}(t))\big)\ \Big|\ \mf_{s}\Big]dW(s)
\Big\rangle\chi_{A_{ij}}(\omega)\Big\}dt\nonumber\\
&=&\frac{1}{\theta^2}\int_{\t}^{\t+\theta}\int_{\t}^{t}
\me~\Big[\Big\langle\Phi(\t)\Phi(s)^{-1}\sigma_{u}(s)
(v^{k}-\bar{u}(s)),\nonumber\\
& & \qquad\qquad\qquad\qquad\qquad\qquad\qquad\qquad
\dd_{s}\big(\ms(t)^{\top}(v^{k}-\bar{u}(t))\big)
\Big\rangle\chi_{A_{ij}}(\omega)\Big]dsdt.\nonumber
\end{eqnarray}
The last equality in (\ref{th31 equ1}) follows from the fact that~$A_{ij}\in\mf_{t_{i}}\subset\mf_{\t}$ and
\begin{eqnarray*}
& &\me~\Big\{\Big\langle \int_{\t}^{t}\Phi(\t)\Phi(s)^{-1}\sigma_{u}(s)
(v^{k}-\bar{u}(s))\chi_{A_{ij}}(\omega)dW(s),\\
& & \qquad\qquad\qquad\qquad\qquad
\int_{0}^{t}\me~\Big[
\dd_{s}\big(\ms(t)^{\top}(v^{k}-\bar{u}(t))\big)\ \Big|\ \mf_{s}\Big]dW(s)
\Big\rangle\chi_{A_{ij}}(\omega)\Big\}\\
&=&\me~\Big\{\chi_{A_{ij}}(\omega)\me~\Big(\Big\langle
\int_{\t}^{t}\Phi(\t)\Phi(s)^{-1}\sigma_{u}(s)
(v^{k}-\bar{u}(s))\chi_{A_{ij}}(\omega)dW(s),\\
& & \qquad\qquad\qquad\qquad\ \
\int_{\t}^{t}\me~\Big[\dd_{s}\big(\ms(t)^{\top}(v^{k}-\bar{u}(t))\big)
\ \Big|\ \mf_{s}\Big]dW(s)\Big\rangle\ \Big|\ \mf_{\t}\Big)\Big\}\\
&=&\me~\Big\{\chi_{A_{ij}}(\omega)
\me~\Big(\int_{\t}^{t}\Big\langle
\Phi(\t)\Phi(s)^{-1}\sigma_{u}(s)
(v^{k}-\bar{u}(s))\chi_{A_{ij}}(\omega),\\
& & \qquad\qquad\qquad\qquad\qquad\qquad
\me~\Big[\dd_{s}\big(\ms(t)^{\top}(v^{k}-\bar{u}(t))\big)\ \Big|\ \mf_{s}\Big]
\Big\rangle ds\ \Big|\ \mf_{\t}\Big)\Big\}\\
&=&\me\int_{\t}^{t}
\Big\langle \Phi(\t)\Phi(s)^{-1}\sigma_{u}(s)
(v^{k}-\bar{u}(s))\chi_{A_{ij}}(\omega),\\
& & \qquad\qquad\qquad\qquad\qquad\qquad
\me~\Big[\dd_{s}\big(\ms(t)^{\top}(v^{k}-\bar{u}(t))\big)\ \Big|\ \mf_{s}\Big]
\Big\rangle\chi_{A_{ij}}(\omega)ds\\
&=&\int_{\t}^{t}\me~\Big[
\Big\langle \Phi(\t)\Phi(s)^{-1}\sigma_{u}(s)
(v^{k}-\bar{u}(s)), \dd_{s}\big(\ms(t)^{\top}(v^{k}-\bar{u}(t))\big)
\Big\rangle\chi_{A_{ij}}(\omega)\Big]ds.
\end{eqnarray*}

Note that
$$\dd_{s}\big(\ms(t)^{\top}(v^{k}-\bar{u}(t))\big)
=\dd_{s} \ms(t)^{\top}(v^{k}-\bar{u}(t))
-\ms(t)^{\top}\dd_{s}\bar{u}(t).$$
We have,
\begin{eqnarray}\label{th31 equ1add}
\quad & &\frac{1}{\theta^2}\int_{\t}^{\t+\theta}\int_{\t}^{t}
\me~\Big[\Big\langle\Phi(\t)\Phi(s)^{-1}\sigma_{u}(s)
(v^{k}-\bar{u}(s)), \\
& & \qquad\qquad\qquad\qquad\qquad\qquad\qquad\qquad\ \ \
\dd_{s}\big(\ms(t)^{\top}(v^{k}-\bar{u}(t))\big)
\Big\rangle\chi_{A_{ij}}(\omega)\Big]dsdt\nonumber\\
&=&\frac{1}{\theta^2}\int_{\t}^{\t+\theta}\int_{\t}^{t}
\me~\Big[\Big\langle\Phi(\t)\Phi(s)^{-1}\sigma_{u}(s)
(v^{k}-\bar{u}(s)), \nonumber\\
& & \qquad\qquad\qquad\qquad\qquad\qquad\qquad\qquad\ \ \ \ \
\dd_{s} \ms(t)^{\top}(v^{k}-\bar{u}(t))
\Big\rangle\chi_{A_{ij}}(\omega)\Big]dsdt\nonumber\\
& &-\frac{1}{\theta^2}\int_{\t}^{\t+\theta}\int_{\t}^{t}
\me~\Big[\inner{\Phi(\t)\Phi(s)^{-1}\sigma_{u}(s)
(v^{k}-\bar{u}(s))}{\ms(t)^{\top}\dd_{s}\bar{u}(t)
}\chi_{A_{ij}}(\omega)\Big]dsdt.\nonumber
\end{eqnarray}

For the first part in the right hand side of (\ref{th31 equ1add}),
\begin{eqnarray}\label{th31 equ2}
& &\frac{1}{\theta^2}\int_{\t}^{\t+\theta}\int_{\t}^{t}
\me~\Big[\Big\langle \Phi(\t)\Phi(s)^{-1}\sigma_{u}(s)
(v^{k}-\bar{u}(s)), \\
& & \qquad\qquad\qquad\qquad\qquad\qquad\qquad\qquad
\dd_{s} \ms(t)^{\top}(v^{k}-\bar{u}(t))\Big\rangle
\chi_{A_{ij}}(\omega)\Big]dsdt\nonumber\\
&=&\frac{1}{\theta^2}\int_{\t}^{\t+\theta}\int_{\t}^{t}
\me~\Big[\Big\langle \Phi(\t)\Phi(s)^{-1}\sigma_{u}(s)
(v^{k}-\bar{u}(s)),\nonumber\\
& & \qquad\qquad\qquad\qquad\qquad\quad
\big(\dd_{s} \ms(t)-\nabla\ms(s)\big)^{\top}
(v^{k}-\bar{u}(t))\Big\rangle\chi_{A_{ij}}(\omega)\Big]dsdt\nonumber\\
& & +\frac{1}{\theta^2}\int_{\t}^{\t+\theta}\int_{\t}^{t}
\me~\Big[\Big\langle\Phi(\t)\Phi(s)^{-1}\sigma_{u}(s)
(v^{k}-\bar{u}(s)),\nonumber\\
& & \qquad\qquad\qquad\qquad\qquad\qquad\qquad\quad
\nabla\ms(s)^{\top}(v^{k}-\bar{u}(t))\Big\rangle
\chi_{A_{ij}}(\omega)\Big]dsdt.\nonumber
\end{eqnarray}
Since
\begin{eqnarray*}
& &\Big|\frac{1}{\theta^2}\int_{\t}^{\t+\theta}\int_{\t}^{t}
\me\Big[\Big\langle\Phi(\t)\Phi(s)^{-1}\sigma_{u}(s)
(v^{k}-\bar{u}(s)),\\
& & \qquad\qquad\qquad\qquad\qquad
\big(\dd_{s} \ms(t)-\nabla\ms(s)\big)^{\top}
(v^{k}-\bar{u}(t))\Big\rangle\chi_{A_{ij}}(\omega)\Big]dsdt\Big|\\
&\le&\frac{C}{\theta^2}\int_{\t}^{\t+\theta}\int_{\t}^{t}\me~
\Big[\big|\Phi(\t)\Phi(s)^{-1}\sigma_{u}(s)
(v^{k}-\bar{u}(s))\big|\cdot
\big|\dd_{s} \ms(t)-\nabla\ms(s)\big|\Big]dsdt\\
&\le&\frac{C}{\theta}
\Big[\me \Big(\sup_{s\in [\t,T]}|\Phi(\t)\Phi(s)^{-1}|^2\Big)\Big]^{\frac{1}{2}}\cdot
\Big[\me\int_{\t}^{\t+\theta}\int_{\t}^{t}
\Big|\dd_{s} \ms(t)-\nabla\ms(s)\Big|^2dsdt\Big]^{\frac{1}{2}}\\
&\le&\frac{C}{\theta}\Big[\me\int_{\t}^{\t+\theta}\int_{\t}^{t}
\Big|\dd_{s} \ms(t)-\nabla\ms(s)\Big|^2dsdt\Big]^{\frac{1}{2}},
\end{eqnarray*}
by Lemma \ref{lemma for malliavin deriv}, there exists a sequence $\{\theta_{n}\}_{n=1}^{\infty}$ such that $\theta_n\to0^+$ as $n\to\infty$ and
\begin{eqnarray}\label{th31 equ3}
& &\lim_{n\to \infty}\frac{1}{\theta_{n}^{2}}\int_{\t}^{\t+\theta_{n}}\int_{\t}^{t}
\me\Big[\Big\langle\Phi(\t)\Phi(s)^{-1}\sigma_{u}(s)
(v^{k}-\bar{u}(s)),\\
& & \qquad\qquad\qquad\qquad\qquad
\big(\dd_{s}\ms(t)-\nabla\ms(s)\big)
^{\top}(v^{k}-\bar{u}(t))\Big\rangle\chi_{A_{ij}}(\omega)\Big]dsdt\nonumber\\
&=&0\ \qquad a.e. \t\in[0,T).\nonumber
\end{eqnarray}

For the second part in the right hand side of (\ref{th31 equ2}), by Lemma \ref{technical lemma} it follows that
\begin{eqnarray}\label{th31 equ5}
& & \lim_{\theta\to 0^+}\frac{1}{\theta^2}\int_{\t}^{\t+\theta}\int_{\t}^{t}
\me~\Big[\Big\langle\Phi(\t)\Phi(s)^{-1}\sigma_{u}(s)
(v^{k}-\bar{u}(s)),\\
& & \qquad\qquad\qquad\qquad\qquad\qquad\qquad
\nabla\ms(s)^{\top}(v^{k}-\bar{u}(t))\Big\rangle
\chi_{A_{ij}}(\omega)\Big]dsdt\nonumber\\
&=&\frac{1}{2}\me~\Big[\inner{\nabla\ms(\t)\sigma_{u}(\t)(v^{k}-\bar{u}(\t))}
{v^{k}-\bar{u}(\t)}\chi_{A_{ij}}(\omega)\Big],\ \ a.e. \ \t\in [t_{i},T).\nonumber
\end{eqnarray}

Therefore, by (\ref{th31 equ2})--(\ref{th31 equ5}), we conclude that
\begin{eqnarray}\label{th31 equ6}
& &\lim_{n\to \infty}\frac{1}{\theta_{n}^{2}}\int_{\t}^{\t+\theta_{n}}\int_{\t}^{t}
\me~\Big[\Big\langle\Phi(\t)\Phi(s)^{-1}\sigma_{u}(s)
(v^{k}-\bar{u}(s)),\\
& & \qquad\qquad\qquad\qquad\qquad\qquad\qquad
\dd_{s} \ms(t)^{\top}(v^{k}-\bar{u}(t))\Big\rangle
\chi_{A_{ij}}(\omega)\Big]dsdt\nonumber\\
&=&\frac{1}{2}\me~\Big[\inner{\nabla\ms(\t)\sigma_{u}(\t)(v^{k}-\bar{u}(\t))}
{v^{k}-\bar{u}(\t)}\chi_{A_{ij}}\Big],\ \ a.e. \ \t\in [t_{i},T).\nonumber
\end{eqnarray}

In a similar way, we can prove that there exists a subsequence $\{\theta_{n_{l}}\}_{l=1}^{\infty}$ of $\{\theta_{n}\}_{n=1}^{\infty}$  such that
\begin{eqnarray*}
& &\lim_{l\to \infty}\frac{1}{\theta_{n_{l}}^{2}}\int_{\t}^{\t+\theta_{n_{l}}}\int_{\t}^{t}
\me~\Big[\inner{\Phi(\t)\Phi(s)^{-1}\sigma_{u}(s)
(v^{k}-\bar{u}(s))}
{\ms(t)^{\top}\dd_{s}\bar{u}(t)}\chi_{A_{ij}}(\omega)\Big]dsdt\nonumber\\
&=& \frac{1}{2} \me~\Big[\inner{\ms(\t)\sigma_{u}(\t)(v^{k}-\bar{u}(\t))}
{\nabla\bar{u}(\t)}\chi_{A_{ij}}(\omega)\Big],\ \ a.e. \ \t\in [t_{i},T).\nonumber\\
\end{eqnarray*}
To simplify the notation, we assume that the above $\{\theta_{n_{l}}\}_{l=1}^{\infty}$ is  $\{\theta_{n}\}_{n=1}^{\infty}$  itself, that is
\begin{eqnarray}\label{th31 equ7}
& &\lim_{n\to \infty}\frac{1}{\theta_{n}^{2}}\int_{\t}^{\t+\theta_{n}}\int_{\t}^{t}
\me~\Big[\inner{\Phi(\t)\Phi(s)^{-1}\sigma_{u}(s)
(v^{k}-\bar{u}(s))}
{\ms(t)^{\top}\dd_{s}\bar{u}(t)}\chi_{A_{ij}}(\omega)\Big]dsdt\nonumber\\
&=& \frac{1}{2} \me~\Big[\inner{\ms(\t)\sigma_{u}(\t)(v^{k}-\bar{u}(\t))}
{\nabla\bar{u}(\t)}\chi_{A_{ij}}(\omega)\Big],\ \ a.e. \ \t\in [t_{i},T).\nonumber\\
\end{eqnarray}

Combining (\ref{th31 equ1}), (\ref{th31 equ1add}), (\ref{th31 equ6}) and (\ref{th31 equ7}),  we obtain (\ref{limit of S(t)y1(ijk)(t)equ3}). Then, by (\ref{limit of S(t)y1(ijk)(t)part2})--(\ref{limit of S(t)y1(ijk)(t)equ3}),  we obtain (\ref{limit of S(t)y1(ijk)(t)part2final}).

Finally, by (\ref{limit of S(t)y1(ijk)(t)}), (\ref{limit of S(t)y1(ijk)(t)part1}) and (\ref{limit of S(t)y1(ijk)(t)part2final}) we conclude that, for any $i,j,k\in \mn$, there exists a Lebesgue measurable set $E^{k}_{i,j}\subset[t_{i},T)$ with $|E^{k}_{i,j}|=0$ such that
\begin{eqnarray}\label{ae for fixed ijk}
\qquad 0&\ge&\frac{1}{2}\me~\Big[ \inner{\ms(\t)b_{u}(\t)(v^{k}
-\bar{u}(\t))}{v^{k}-\bar{u}(\t)}\chi_{A_{ij}}(\omega)\Big]\\
& &+\frac{1}{2} \me~\Big[ \inner{\nabla \ms(\t)\sigma_{u}(\t)(v^{k}-\bar{u}(\t))}
{v^{k}-\bar{u}(\t)}\chi_{A_{ij}}(\omega)\Big]\nonumber\\
& & -\frac{1}{2} \me~\Big[\inner{\ms(\t)\sigma_{u}(\t)(v^{k}-\bar{u}(\t))}
{\nabla\bar{u}(\t)}\chi_{A_{ij}}(\omega)\Big],\quad \forall \ \t\in [t_{i},T)\setminus E^{k}_{i,j}.\nonumber
\end{eqnarray}
Let $E_{0}=\bigcup_{i,j,k\in \mn} E^{k}_{i,j}$, then $|E_{0}|=0$, and for any $i,j,k\in \mn$,
\begin{eqnarray*}
& &\me~\Big[ \inner{\ms(\t)b_{u}(\t)(v^{k}
-\bar{u}(\t))}{v^{k}-\bar{u}(\t)}\chi_{A_{ij}}(\omega)\Big]\\
& &\quad+ \me~\Big[ \inner{\nabla \ms(\t)\sigma_{u}(\t)(v^{k}-\bar{u}(\t))}
{v^{k}-\bar{u}(\t)}\chi_{A_{ij}}(\omega)\Big]\nonumber\\
& &\quad-\me~\Big[ \inner{\ms(\t)\sigma_{u}(\t)(v^{k}-\bar{u}(\t))}
{\nabla\bar{u}(\t)}\chi_{A_{ij}}(\omega)\Big]\nonumber\\
& &\le 0,
\  \  \forall\ \t\in [t_{i},T)\setminus E_{0}.
\end{eqnarray*}
By the construction of  $\{A_{ij}\}_{i=1}^{\infty}$, the continuity of the filter $\mmf$ and the density of $\{v^{k}\}_{k=1}^{\infty}$, we conclude that
\begin{eqnarray*}
& &\inner{\ms(\t)b_{u}(\t)(v
-\bar{u}(\t))}{v-\bar{u}(\t)}\\
& &\quad+ \inner{\nabla \ms(\t)\sigma_{u}(\t)(v-\bar{u}(\t))}
{v-\bar{u}(\t)}\nonumber\\
& &\quad -\inner{\ms(\t)\sigma_{u}(\t)(v-\bar{u}(\t))}
{\nabla\bar{u}(\t)}\nonumber\\
& &\le 0,\ \ a.s.,
\qquad   \forall\ (\t,v)\in ([0,T]\setminus E_{0})\times U.
\end{eqnarray*}
This completes the proof of Theorem \ref{2orderconditionth}.

\end{document}